\newcommand{\done}{\mathscr{d}_1}
\newcommand{\dtwo}{\mathscr{d}_2}
\newcommand{\tzmap}{\theta_{\boldsymbol{z}}}
\newcommand{\Z}{\mathbb{Z}}
\newcommand{\lcm}{\text{lcm}}
\theoremstyle{plain}
\newtheorem{thm}{Theorem}[section]
\newtheorem{prop}[thm]{Proposition}
\newtheorem{cor}[thm]{Corollary}
\newtheorem{lemma}[thm]{Lemma}
\newtheorem{remark}[thm]{Remark}
\theoremstyle{definition}
\newtheorem*{definition}{Definition}
\newtheorem*{example}{Example}
\newtheorem{thm*}{Main Theorem}
\title{Measuring the Space of Metaplectic Whittaker Functions}
\author{Ilani Axelrod-Freed, Claire Frechette, and Veronica Lang}
\date{\today}
\begin{document}

\maketitle




\begin{abstract}
Whittaker functions are special functions that arise in $p$-adic number theory and representation theory. They may be defined on representations of reductive groups as well as their metaplectic covering groups: fascinatingly, many of their number theoretic applications survive the transition between the reductive and metaplectic cases. However, one notable difference is that the space of Whittaker functions on a reductive group over a nonarchimedean local field $F$ is one-dimensional, whereas this is no longer true in the metaplectic case. In a previous paper, the second author showed that the dimension of the space of Whittaker functions on an arbitrary $n$-fold metaplectic cover of $GL_r(F)$ can be counted in terms of the number of solutions to a particular system of linear Diophantine equations in terms of $n$ and $r$. In this paper, we calculate two precise formulae for $\dim(\mathfrak{W})$, one inspired by viewing this system as a homogenous specialization of an inhomogenous system and the other by the structure of the coroot lattice of $GL_r(F)$. Then we use these formulae to investigate a homomorphism between $\mathfrak{W}$ and a particular quantum group module, built by the second author in a previous paper, and show precisely when this map is well-defined for any choice of basis for $\mathfrak{W}$.
\end{abstract}



\section{Introduction}

Whittaker functions arise in $p$-adic number theory and representation theory, specifically in the study of automorphic forms over local fields and the study of principal series representations of reductive groups. They can be written in many forms: as integrals over matrix groups, as generating functions over many different combinatorial objects, as coefficients of automorphic forms, and in some cases as partition functions of lattice models. In particular, when the lattice model is solvable, this viewpoint leads to a surprising connection between the algebraic structures of the space of Whittaker functions and of modules for quantum groups.

One type of Whittaker functions of particular interest are \emph{metaplectic} Whittaker functions, which are Whittaker functions on the principal series representations of \emph{metaplectic covering groups}, central extensions of a reductive group by the $n$-th roots of unity. These groups are named after the first ``Metaplectic Group," the unique double cover of the \emph{symplectic} group $Sp_{2n}$ discovered by Weil \cite{Weil}. However, the machinery generating this particular cover can be applied in far greater generality and results in non-algebraic groups that inherit much of the interesting representation theory and number theory of their algebraic base groups. One reason for this phenomenon is that if $G$ is a group that is also a topological space, the metaplectic cover is a covering space in the topological sense as well: thus, the metaplectic covers of reductive groups, which are equipped with a topological structure, are of particular interest. These groups have been studied in various levels of generality by Kazhdan and Patterson \cite{KP}, Matsumoto \cite{Matsumoto}, Brylinski-Deligne \cite{BD}, McNamara \cite{McNrep}, Gan, Gao, and Weissman \cite{GanGao,Ast}, and many others. For our purposes, a particularly useful description is that of Brylinski-Deligne \cite{BD}, who proved that metaplectic covers of reductive $p$-adic groups are in correspondence with symmetric Weyl-group invariant bilinear forms on the cocharacter lattice. We will examine the structure of these covering groups in more detail in Section \ref{MWhit}, following the treatment of the second author in \cite{Frechette}. 

The focus of this paper is the reductive group $G = GL_r(F)$, the general linear group of $r\times r$ matrices over a nonarchimedean local field $F$ containing $\mu_{2n}$. In this case, which was first studied by Matsumoto \cite{Matsumoto}, the bilinear forms prescribed by Brylinski-Deligne \cite{BD} recover a subset of the Kahzdan-Patterson covers \cite{KP} and may be explicitly parametrized as in Frechette \cite{Frechette} as $B_{c,d}$ in terms of two parameters $c,d\in \Z$ (see Section \ref{MWhit} for the details of this construction). In general, metaplectic covers of $G$ are denoted $\widetilde{G}$, so let $\widetilde{G}_{c,d,r,n}$ be the $n$-fold cover of $GL_r(F)$ corresponding to $B_{c,d}$. It is important to note that while there may be multiple bilinear forms corresponding to a given cover, any such form will suffice for our purposes. We refer the reader to \cite{KP} or \cite{Frechette} for more detailed descriptions of which forms give identical or similar covers.


One interesting difference between the algebraic (i.e., non-metaplectic) and metaplectic cases is the dimension of the space of Whittaker functions. For a reductive algebraic group, the space of Whittaker functions on any principal series representation is one-dimensional \cite{Shalika,PS_euler,GelfandKazhdan}. In the metaplectic case, however, the construction of principal series representations becomes more complicated, due to the fact that the \emph{metaplectic torus} $\widetilde{T}$, the preimage in the metaplectic cover $\widetilde{G}$ of the torus $T(F)$, is no longer necessarily abelian. Due to this phenomenon, the dimension of the space of Whittaker functions becomes dependent on the choice of cover. As shown by McNamara \cite{McNrep}, if $\mathfrak{W}$ is the space of metaplectic Whittaker functions for a principal series representation on $\widetilde{G}$ and $H$ is the maximal abelian subgroup of $\widetilde{T},$ then
\[ \dim(\mathfrak{W}) = \left|\widetilde{T}/H\right|,\]
and the basis vectors of $\mathfrak{W}$ may be parametrized by the cosets in $\widetilde{T}/H$. Note that the space of Whittaker functions is traditionally denoted $\mathfrak{W}^{\boldsymbol{z}}$, where $\boldsymbol{z} = (z_1,...,z_r) \in \mathbb{C}^r$ lists the Satake parameters for the principal series representation. Since the results in this paper largely do not depend on the choice of $\boldsymbol{z}$, we will generally drop it from the notation and write simply $\mathfrak{W}$.

Examining the group structures of $\widetilde{T}$ and $H$ for a non-archimedean local field $F$, we achieve an explicit expression for the dimension, which we will prove in Section \ref{MWhit} as Theorem \ref{thm:restate1.1}. 

\begin{thm} 
\label{thm: Frechette}
For an $n$-fold metaplectic cover $\widetilde{G}$ of $GL_r(F)$ corresponding to the bilinear form $B_{c,d}$,
\[\left|\widetilde{T}/H\right| = \frac{n^r}{\left|\left\{\boldsymbol{x}\in (\Z/n\Z)^r: B_{c,d}(\boldsymbol{x}, \boldsymbol{y}) \equiv 0 \pmod{n} \text{ for all } \boldsymbol{y}\in(\Z/n\Z)^r\right\}\right|}.\]
\end{thm}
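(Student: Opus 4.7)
The strategy is to combine McNamara's identification of $\dim(\mathfrak{W})$ with $|\widetilde{T}/H|$ and a careful analysis of the commutator on $\widetilde{T}$, reducing the computation to a finite-abelian-group problem inside $(\Z/n\Z)^{2r}$.

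First, by McNamara's result it suffices to compute $|\widetilde{T}/H|$. Following the Brylinski-Deligne framework (to be recalled in Section \ref{MWhit}), I would present $\widetilde{T}$ as a central extension
\[1 \to \mu_n \to \widetilde{T} \to T(F) = (F^\times)^r \to 1\]
whose commutator $\eta : \widetilde{T} \times \widetilde{T} \to \mu_n$ factors through $T(F) \times T(F)$ as an alternating $\mu_n$-valued pairing built from $B_{c,d}$ and the $n$-th power Hilbert symbol. Because the Hilbert symbol is trivial on $n$-th powers, $\eta$ descends to the finite quotient $T(F)/T(F)^n$. Since $F \supset \mu_{2n}$, one has $F^\times/F^{\times n} \cong \Z/n\Z \times \mu_n \cong (\Z/n\Z)^2$, split into a valuation and a unit-class part, so $T(F)/T(F)^n \cong (\Z/n\Z)^{r} \times (\Z/n\Z)^r$.

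Next, I would use the explicit cocycle from Frechette's presentation of $\widetilde{G}_{c,d,r,n}$ to write $\eta$ as
\[\eta\bigl((\boldsymbol{\lambda}, \boldsymbol{u}), (\boldsymbol{\mu}, \boldsymbol{v})\bigr) = \zeta^{\,B_{c,d}(\boldsymbol{\lambda}, \boldsymbol{v}) - B_{c,d}(\boldsymbol{\mu}, \boldsymbol{u})}\]
for a fixed primitive $n$-th root of unity $\zeta$. The symmetry of $B_{c,d}$ makes this expression genuinely antisymmetric in its two arguments; a pair $(\boldsymbol{\lambda}, \boldsymbol{u})$ lies in $\operatorname{rad}(\eta)$ exactly when both $\boldsymbol{\lambda}$ and $\boldsymbol{u}$ satisfy the Diophantine condition defining the set $R$ in the denominator of the theorem. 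Hence $\operatorname{rad}(\eta) = R \times R$ has order $|R|^2$ inside $(\Z/n\Z)^{2r}$, and the quotient $(\Z/n\Z)^{2r}/(R \times R)$ carries a non-degenerate $\mu_n$-valued alternating form of order $(n^r/|R|)^2$.

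Finally, I would invoke the standard theory of non-degenerate alternating forms on finite abelian groups. The center $Z(\widetilde{T})$ is the preimage in $\widetilde{T}$ of $\operatorname{rad}(\eta)$, and a subgroup $H \supset Z(\widetilde{T})$ is maximal abelian in $\widetilde{T}$ precisely when its image in $\widetilde{T}/Z(\widetilde{T})$ is maximal isotropic for the induced non-degenerate form. Since maximal isotropic subgroups of such a form on a group of order $m^2$ have order $m$, one gets $|H/Z(\widetilde{T})| = n^r/|R|$ and therefore
\[\left|\widetilde{T}/H\right| = \frac{|\widetilde{T}/Z(\widetilde{T})|}{|H/Z(\widetilde{T})|} = \frac{(n^r/|R|)^2}{n^r/|R|} = \frac{n^r}{|R|}.\]
The main obstacle will be the second step: extracting the clean bilinear expression for $\eta$ on $(\Z/n\Z)^{2r}$ from the Brylinski-Deligne cocycle and the tame Hilbert symbol. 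This is precisely the metaplectic bookkeeping that Section \ref{MWhit} must develop before the finite-group count becomes a routine computation.
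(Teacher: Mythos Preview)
Your argument is correct and takes a genuinely different route from the paper. The paper invokes McNamara's structural description $H=\mu_n\times T(\mathfrak{o})\times\Lambda$ directly, so that the $\mu_n$ and $T(\mathfrak{o})$ factors cancel in $\widetilde{T}/H$ and one is left immediately with $Y/\Lambda$; reducing modulo $n$ then yields $(\Z/n\Z)^r/\Lambda_{fin}$ in two lines. You instead analyze the full commutator pairing on $T(F)/T(F)^n\cong(\Z/n\Z)^{2r}$, identify its radical as $R\times R$, and appeal to the structure theorem for non-degenerate alternating forms on finite abelian groups to conclude that every maximal isotropic subgroup has index $n^r/|R|$. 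The paper's route is shorter and, importantly for later sections, delivers the explicit parametrization $\widetilde{T}/H\cong Y/\Lambda$ by cocharacters that Section~\ref{Quantum} uses to write $\theta_{\boldsymbol{z}}$ on coset representatives; your route is more self-contained (it does not quote McNamara's specific $H$) and makes transparent that \emph{any} maximal abelian subgroup of $\widetilde{T}$ has the same index, but it yields only the cardinality rather than a preferred coset model. Both arguments tacitly require the tame hypothesis $p\nmid n$: for you, so that $|F^\times/F^{\times n}|=n^2$ and the Hilbert symbol has the clean valuation--unit form you use; for the paper, so that $T(\mathfrak{o})$ lies inside $H$ as McNamara's description asserts. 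You flag this at the end, and it is worth stating up front since your isomorphism $F^\times/F^{\times n}\cong(\Z/n\Z)^2$ fails in the wild case.
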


Our main result is a closed formula for the order of the set in the denominator of Theorem \ref{thm: Frechette}. To this end, let $$\Lambda_{fin}:=\left\{\boldsymbol{x}\in (\Z/n\Z)^r: B_{c,d}(\boldsymbol{x}, \boldsymbol{y}) \equiv 0 \pmod{n} \text{ for all } \boldsymbol{y}\in(\Z/n\Z)^r\right\}.$$ Then, using linear Diophantine equations to parametrize $\Lambda_{fin}$ in two different ways, we arrive at the following result, which is proven in two parts as Theorem \ref{thm: mainthm} and Theorem \ref{thm:Main2}, respectively.

\begin{thm*}
Given an $n$-fold metaplectic cover of $GL_r(F)$ corresponding to the bilinear form $B_{c,d}$, 
\begin{align*}
|\Lambda_{fin}|
&= \done^{r-1} \gcd\left(\dtwo, \frac{dn}{\done}\right), 
\end{align*}
where $\done = \gcd(c-d,n)$ and $\dtwo = \gcd(c+(r-1)d,n)$. Alternately, we also have that
\begin{align*}
    |\Lambda_{fin}| = \frac{\done^{r-1} \dtwo}{n} \gcd\left(\frac{n}{\done}, \frac{n}{\dtwo}, r\right) \text{lcm}\left(\frac{n}{\gcd(r,n)}, \gcd\left(\dtwo, \frac{d n}{\done} \right)\right),
\end{align*}
where $b = \gcd(r,n)$.
\label{thm: Intromainthm}
\end{thm*}

The first formula arises from viewing the parametrizing Diophantine equations, which are generated by the natural basis for the cocharacter lattice for $GL_r(F)$, as a homogeneous specialization of an inhomogenous system. This viewpoint provides a more elegant formula and a more concrete description of the structure of the space of Whittaker functions. On the other hand, while the second formula is more complicated, it arises from the root structure of $GL_r(F)$, which provides a more direct path to extending this result to other reductive groups.

For $GL_r(F)$, the space of Whittaker functions is also closely tied to a particular module for a quantum group built from the Lie algebra $\mathfrak{gl}$. Despite the name, quantum groups are not groups at all, but rather quasitriangular Hopf algebras. For this paper, we consider the quantum affine universal enveloping algebra $U_q(c,d,n):= U_q(\widehat{\mathfrak{gl}}(n/\done))$, where $q$ is the cardinality of the residue field for $F$. This quantum group has a $n/\done$-dimensional evaluation module $V_+(z)$ depending on a parameter $z\in \mathbb{C}$, whose basis vectors may be indexed using the elements of $\Z/(n/\done)\Z$.

In \cite{BBBIce}, Brubaker, Bump, and Buciumas prove that for the simplest $n$-fold metaplectic cover of $GL_r(F)$ (the one where $c=1$ and $d=0$), the space of Whittaker functions is isomorphic to an $r$-fold tensor product of evaluation modules and that after a Drinfeld twist (which changes the group action but does not affect the module structure) this isomorphism matches the action of the quantum group to the action of intertwining operators on the underlying principal series representation. The key ingredient in this proof is a lattice model construction for metaplectic Whittaker functions in the case $c=1, d=0$ developed in \cite{BBCFG} by Brubaker, Bump, Chinta, Friedberg, and Gunnells.

In \cite{Frechette}, the second author proves that both of these constructions are true in far greater generality, constructing a Whittaker function lattice model and a map $\tzmap$ between $\mathfrak{W}^{\boldsymbol{z}}$ and an $r$-fold tensor product $V_+(z_1) \otimes \cdots \otimes V_+(z_r)$ of evaluation modules for \emph{any} metaplectic cover of $GL_r(F)$. (To match to the terminology used in \cite{Frechette}, set $n_Q := n/\done$.)  Moreover, passing through this map, the action of the quantum group still matches exactly the action of intertwining operators on the Whittaker functions. 

As $c,d,r,n$ vary, the cost of dealing with more complicated covers is that this map shifts between being an isomorphism, an injection, and a surjection, and the choice of representatives for $H$-cosets affects the map. The lattice model construction used in \cite{Frechette} dictates a choice of coset representatives from $\widetilde{T}/H$ giving a basis for $\mathfrak{W}$ on which $\tzmap$ is well-defined. However, the lattice model is not necessary for the connection between $\mathfrak{W}$ and the quantum module outside of this phenomenon. 
A natural question then arises: when is the map $\tzmap$ well-defined for any choice of basis for $\mathfrak{W}$?

One of the main applications of our results is an answer for this question, using the characterization of elements of $\Lambda_{fin}$ from our proof of Main Theorem \ref{thm: Intromainthm}. Taking any basis for $\mathfrak{W}$, use Theorem \ref{thm: Frechette} to express it as a set of vectors in $(\Z/n\Z)^r$. Then the map given in \cite{Frechette} is precisely 
\begin{align*}
\theta_{\boldsymbol{z}}: \mathfrak{W}^{\boldsymbol{z}} &\rightarrow V_+(z_1) \otimes \cdots \otimes V_+(z_r)\\
\boldsymbol{\nu}\hspace{0.25cm} &\mapsto \hspace{0.5cm} \rho - \boldsymbol{\nu} \pmod{n/\done},
\end{align*}
where $\rho = (r-1,...,2,1,0)$ and the modulus is applied independently in each component of the vector. Using our characterization to show how any particular coset $\nu H$ sits within $(\Z/n\Z)^r$, we arrive at the following result, which will be proven as Theorem \ref{quantummap} and Corollary \ref{quantumIsom}.

\begin{thm*}
For a vector $\boldsymbol{z} = (z_1,...,z_r) \in \mathbb{C}^r$, the homomorphism $\tzmap$ given in \cite{Frechette}  is well-defined for any choice of basis for $\mathfrak{W}$ if and only if
\[ \gcd\left(\dtwo, \frac{dn}{\done}\right) = \gcd(c,d,n).\]
Furthermore, if $\mathfrak{W}$ is either of minimum or maximum dimension, $\tzmap$ is an isomorphism.
\end{thm*}

Understanding how this map is affected by the choice of cover is an important step to understanding how we may extend these quantum connections to metaplectic covers over other reductive groups. While the lattice model connection only exists in full for $GL_r(F)$ and $SL_r(F)$, the Whittaker function framework exists for any reductive group, so we hope that further investigation of the structure of $\mathfrak{W}$ will not only allow us to develop analogues to Main Theorem 1 for other groups, but also to determine the precise quantum group and module connected to the metaplectic Whittaker functions for other types.

Regarding the structure of this paper, in Section \ref{MWhit}, we examine the construction of metaplectic covers of $GL_r(F)$ and their Whittaker functions, culminating in a proof of Theorem \ref{thm: Frechette}. Section \ref{Cochar} introduces the first set of Diophantine equations used to parametrize $\Lambda_{fin}$, which we then use in Section \ref{SolvingCochar} to prove the first part of Main Theorem 1 as Theorem \ref{thm: mainthm}. In Section \ref{Coroot}, we introduce the second set of Diophantine equations for $\Lambda_{fin}$, which facilitate the proof of the second part of Main Theorem 1 as Theorem \ref{thm:Main2} in Section \ref{Comparisons}. In Section \ref{Structure}, we examine some cases in which the formulae for $\dim(\mathfrak{W})$ simplify dramatically and prove conditions for certain dimensions of interest for $\mathfrak{W}$, including conditions for maximum and minimum dimension. Lastly, in Section \ref{Quantum}, we develop the quantum connection and use the structure of $\mathfrak{W}$ to prove Main Theorem 2 as Theorem \ref{quantummap} and Corollary \ref{quantumIsom}.

\section*{Acknowledgements}
This project was partially supported by NSF RTG grant DMS-1745638 and was supervised by the second author as part of the University of Minnesota School of Mathematics Summer 2022 REU program. The second author is also supported by NSF grant DMS-2203042. The authors would like to thank their TA Carolyn Stephen for their guidance throughout the project, as well as Ben Brubaker and Darij Grinberg for helpful comments. 




\section{Spaces of Metaplectic Whittaker Functions}\label{MWhit}

To understand the structure of the space of metaplectic Whittaker functions, we must first concretely describe the metaplectic covers of $GL_r(F)$. We can then extend this explicit parametrization of all covers into a description of the metaplectic torus and its maximal abelian subgroup. As mentioned in the introduction, the quotient of these subgroups controls the dimension of the space of Whittaker functions: describing its structure precisely in terms of the cover allows us to reduce a complicated representation theory question to a straightforward linear algebra problem.

Suppose $n$ is a natural number and $F$ is a nonarchimedean local field containing $2n$ distinct $2n$-th roots of unity $\mu_{2n}$. Let $\mathfrak{o}$ be the ring of integers of $F$ and $\varpi$ its uniformizing element. 

\begin{definition}
Given a split reductive group $G$, an \emph{$n$-fold metaplectic cover} or \emph{$n$-fold metaplectic covering group} $\widetilde{G}$ is a non-algebraic central extension of $G$ by the $n$-th roots of unity $\mu_n$. That is, $\widetilde{G}$ is defined by the following short exact sequence:
\[ 1 \rightarrow \mu_n \rightarrow \widetilde{G} \xrightarrow{p} G \rightarrow 1.\]
\end{definition}

As a set, $\widetilde{G}$ is the set of tuples $(\zeta, g)$ where $\zeta \in \mu_n, g\in G$. However, group multiplication is controlled by a cocycle $\sigma \in H^2(G,\mu_n)$; that is, for two elements $(\zeta_1,g_1), (\zeta_2,g_2)$, their product in $\widetilde{G}$ is
\[(\zeta_1,g_1)\cdot (\zeta_2,g_2) = (\zeta_1\zeta_2\sigma(g_1,g_2), g_1g_2).
\]

In the process of writing down an explicit form for cocycles for covers of $GL_r(F)$, we see that a slightly more general case may be handled simultaneously. Set $G=GL_r(F)$ for the remainder of the paper.

\begin{definition}
More generally, a \emph{metaplectic covering group essentially of degree $n$} is given by a short exact sequence
\[ 1 \rightarrow \mu_m \rightarrow \widetilde{G} \xrightarrow{p} G \rightarrow 1\]
where $n|m$ and the corresponding cocycle $\sigma \in H^2(G,\mu_m)$ satisfies the property that $[\sigma^n]$ is trivial in $H^2(G,\mathbb{C}^\times)$ under the inclusion induced by an embedding $\varepsilon: \mu_m \rightarrow \mathbb{C}^\times$.
\end{definition}

While it is slightly tedious to write down formulae for these cocycles on general elements of $G$, their expressions over the torus $T$ of diagonal matrices in $G$ are quite elegant. In \cite{Frechette}, the second author proves that all metaplectic covers essentially of degree $n$ over $GL_r(F)$ come from a cocycle of the form
\begin{equation} \label{eqn:cocycle} \sigma_{c,d}(\boldsymbol{x},\boldsymbol{y}) = \left(\det(\boldsymbol{x}),\det(\boldsymbol{x})\right)_{2n}^c \prod_{i>j}\left(x_i,y_j\right)^{d-c}_n.
\end{equation}
for $c,d\in \Z$, where $\boldsymbol{x}, \boldsymbol{y} \in T$ and $(\cdot,\cdot)_{k}$ denotes the $k$-th Hilbert symbol (see Neukirch \cite{Neukirch} for more details on the construction of Hilbert symbols). Notably, making the shift to covers essentially of degree $n$ rather than ``purely" of degree $n$ is necessary to include the metaplectic cover corresponding to the cocycle $\sigma_{1,0}$, which, while only essentially of degree $n$, has been an integral example for this field (see for example \cite{BBBIce,BBCFG,MetIce,BBFbook, McNcrystal}.) 

\begin{remark}
Since the $2n$-th Hilbert symbol produces $2n$-th roots of unity, it is necessary that $F$ contain $\mu_{2n}$ for the group to be well defined. However, if we are considering a cocycle for which the parameter $c$ is even, we may relax this condition and require $F$ to contain only $\mu_n$.
\end{remark}

In \cite{BD}, Brylinski-Deligne prove that the set of metaplectic covers is in correspondence with the set of symmetric Weyl-group invariant bilinear forms $B:Y\times Y \rightarrow \Z$ on the cocharacter lattice $Y$ such that $\frac{B(\alpha^\vee,\alpha^\vee)}{2} \in \Z$ for all coroots $\alpha^\vee\in Y$. For $G=GL_r(F)$, a natural choice of basis for $Y$ is the set of $r$ fundamental coweights $\varepsilon_i^\vee: F^\times \rightarrow T$, for $i=1,...,r$, in which $\varepsilon_i^\vee(a) := $ diag$(1,...,1,a,1,...,1)$, where $a$ is in the $i$-th entry. Note: while we will use the notation $\lambda(a)$ for $\lambda\in Y, a\in F^\times$, another common notation is $a^\lambda$.

Under this basis, the cocharacter lattice $Y$ is isomorphic to $\Z^r$; for instance,
\[ (\varepsilon_1^\vee+ 3\varepsilon_2^\vee)(a) = \text{diag}(a, a^3,1,...,1).\]

Using this basis, we represent a bilinear form on $Y$ in terms of the corresponding matrix $A$ such that for $\boldsymbol{x},\boldsymbol{y}\in Y$,
\[ B(\boldsymbol{x},\boldsymbol{y}) = \boldsymbol{x}^T A \boldsymbol{y}.\]
Each of the conditions from the Brylinski-Deligne correspondence translates into a condition for this matrix. First, a symmetric bilinear form prescribes a symmetric matrix. Second, the Weyl group $W$ is isomorphic to the symmetric group $S_r$, and acts on $Y$ by $\sigma \cdot \varepsilon_i^\vee = \varepsilon_{\sigma(i)}^\vee$. Thus, $A$ must be invariant under conjugation by permutation matrices, so for some (suggestively named) $c,d\in \Z$, we have $a_{i,i} = c$ for all $i$ and $a_{i,j} = d$ for all $i\neq j$. (See the matrix in (\ref{eqn:Bcdrn}) for an illustration of this requirement.)

There are $r-1$ simple coroots, each of the form $\varepsilon_i^\vee - \varepsilon_{i+1}^\vee$. To check the integrality condition on the coroot lattice, it suffices to show that it holds for simple coroots. However, for $GL_r(F)$, this condition is satisfied already: for any simple coroot $\varepsilon_i^\vee,$
\[ \frac{B_{c,d}(\varepsilon_i^\vee, \varepsilon_i^\vee)}{2} = \frac{2(c-d)}{2} = c-d \in \Z.\]

By Brylinski-Deligne, the metaplectic cover corresponding to this bilinear form satisfies the following condition: if $\boldsymbol{x}, \boldsymbol{y} \in \widetilde{T} = p^{-1}(T)$ such that $p(\boldsymbol{x}) = \lambda(x), p(\boldsymbol{y}) = \mu(y)$ for some $x,y\in F^\times$ and  $\lambda,\mu \in Y$, then the commutator of $\boldsymbol{x}$ and $\boldsymbol{y}$ is
\[[\boldsymbol{x},\boldsymbol{y}] = (x,y)_n^{B(\lambda,\mu)}.\]

Evaluating the commutator in terms of an explicit cocycle, we may identify the bilinear form corresponding to a specific cocycle and vice versa. Note that this property illuminates one of the key reasons the Brylinski-Deligne correspondence is not a bijection: since the cocycle in (\ref{eqn:cocycle}) depends on powers of Hilbert symbols, there are many different cocycles which will give exactly the same cover, specifically any $\sigma_{c',d'}$ such that $c'\equiv c \pmod{2n}$ and $d'-c' \equiv d-c\pmod{n}.$

\begin{thm}[Frechette \cite{Frechette}] \label{thrm: background}
For $c,d\in \Z$, the essentially $n$-fold metaplectic cover of $GL_r(F)$ with multiplication given by $\sigma_{c,d}$ corresponds to the bilinear form $B_{c,d}$ that acts on $(\boldsymbol{x},\boldsymbol{y})\in \Z^r \times \Z^r$ by
\begin{equation}\label{eqn:Bcdrn}
B_{c,d}(\boldsymbol{x},\boldsymbol{y}) = \boldsymbol{x}^T \cdot \begin{pmatrix}
    c & d & d & \dots & d\\
    d & c & d & \dots & d\\
    d & d & c & \dots & d\\
    \vdots & \vdots & & \ddots & \vdots\\
    d & d & d & \dots & c
    \end{pmatrix}\cdot \boldsymbol{y}.
\end{equation} 
\end{thm}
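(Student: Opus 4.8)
The plan is to read the bilinear form off the commutator pairing on $\widetilde{T}$, using the Brylinski--Deligne description recalled above: the cover attached to $\sigma_{c,d}$ corresponds to the (unique, symmetric, $W$-invariant) form $B$ with $[\boldsymbol{x},\boldsymbol{y}] = (x,y)_n^{B(\lambda,\mu)}$ whenever $\boldsymbol{x},\boldsymbol{y}\in\widetilde{T}$ satisfy $p(\boldsymbol{x})=\lambda(x)$ and $p(\boldsymbol{y})=\mu(y)$. So it suffices to evaluate this commutator directly from the cocycle $\sigma_{c,d}$ in (\ref{eqn:cocycle}), recognize the resulting exponent as $\boldsymbol{\lambda}^T A_{c,d}\boldsymbol{\mu}$ for the matrix $A_{c,d}$ of (\ref{eqn:Bcdrn}), and then confirm that $B_{c,d}$ satisfies the three Brylinski--Deligne conditions so that it really is the form attached to the cover.

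First I would observe that since $p(\boldsymbol{x})$ and $p(\boldsymbol{y})$ lie in the abelian group $T$, the commutator $[\boldsymbol{x},\boldsymbol{y}]$ is independent of the choice of lifts and equals $\sigma_{c,d}(\lambda(x),\mu(y))\,\sigma_{c,d}(\mu(y),\lambda(x))^{-1}\in\mu_m$; in particular only the torus formula (\ref{eqn:cocycle}) is needed. Writing $\lambda=\sum_i\lambda_i\varepsilon_i^\vee$ and $\mu=\sum_i\mu_i\varepsilon_i^\vee$, we have $\lambda(x)=\operatorname{diag}(x^{\lambda_1},\dots,x^{\lambda_r})$, so the $i$-th torus coordinate of $\lambda(x)$ is $x^{\lambda_i}$ and $\det\lambda(x)=x^{\sum_i\lambda_i}$.

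Next I would expand both factors using bimultiplicativity and skew-symmetry of the Hilbert symbol. The ``Hilbert product'' part of $\sigma_{c,d}(\lambda(x),\mu(y))$ becomes $(x,y)_n^{(d-c)\sum_{i>j}\lambda_i\mu_j}$, while the corresponding part of $\sigma_{c,d}(\mu(y),\lambda(x))^{-1}$ becomes, after applying $(a,b)_n=(b,a)_n^{-1}$ and relabelling the summation pair $(i,j)\mapsto(j,i)$, the factor $(x,y)_n^{(d-c)\sum_{i<j}\lambda_i\mu_j}$; together these give $(x,y)_n^{(d-c)\sum_{i\ne j}\lambda_i\mu_j}$. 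The determinantal factor, after the standard simplifications (using $(a,a)_{2n}=(a,-1)_{2n}$, bimultiplicativity, and the fact that the two determinantal contributions combine into a $2n$-th symbol raised to the power $2c$, which by $(a,b)_{2n}^2=(a,b)_n$ is an $n$-th symbol), contributes $(x,y)_n^{c(\sum_i\lambda_i)(\sum_j\mu_j)}$. Multiplying, $[\boldsymbol{x},\boldsymbol{y}]=(x,y)_n^{E}$ with
\[ E \;=\; c\sum_{i,j}\lambda_i\mu_j + (d-c)\sum_{i\ne j}\lambda_i\mu_j \;=\; c\sum_i\lambda_i\mu_i + d\sum_{i\ne j}\lambda_i\mu_j, \]
which is exactly $\boldsymbol{\lambda}^T A_{c,d}\boldsymbol{\mu}=B_{c,d}(\lambda,\mu)$. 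To finish, $B_{c,d}$ is symmetric and invariant under simultaneous permutations of the coordinates (hence $W$-invariant, since $W\cong S_r$ acts by $\sigma\cdot\varepsilon_i^\vee=\varepsilon_{\sigma(i)}^\vee$), and $\tfrac12 B_{c,d}(\varepsilon_i^\vee-\varepsilon_{i+1}^\vee,\varepsilon_i^\vee-\varepsilon_{i+1}^\vee)=c-d\in\Z$, so all three Brylinski--Deligne conditions hold and $B_{c,d}$ is the form attached to $\sigma_{c,d}$.

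I expect the main obstacle to be the index bookkeeping in the Hilbert-symbol product: after inverting $\sigma_{c,d}(\mu(y),\lambda(x))$ and applying skew-symmetry, the surviving product runs over $i<j$ rather than $i>j$, and one must see that it assembles with the $i>j$ product into a clean sum over all ordered pairs $i\ne j$; one must also track the $2n$-versus-$n$ distinction introduced by the determinantal Hilbert symbol. As a sanity check I would verify the two smallest cases $\lambda=\mu=\varepsilon_i^\vee$ and $\lambda=\varepsilon_i^\vee,\ \mu=\varepsilon_j^\vee$ with $i\ne j$ separately, since these directly produce the diagonal entry $c$ and the off-diagonal entry $d$ of $A_{c,d}$ and pin down the sign and modulus conventions.
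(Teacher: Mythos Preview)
Your approach is exactly the one the paper sketches: the paragraph immediately preceding the theorem says ``Evaluating the commutator in terms of an explicit cocycle, we may identify the bilinear form corresponding to a specific cocycle and vice versa,'' and the paper then simply cites \cite{Frechette} for the statement rather than carrying out the computation. So there is no separate proof in this paper to compare against; you have supplied precisely the computation the authors gesture at, together with the verification of the three Brylinski--Deligne conditions that the paper records just above the theorem.

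One caution on the details: your treatment of the determinantal factor tacitly reads the cocycle as $(\det\boldsymbol{x},\det\boldsymbol{y})_{2n}^c$, whereas equation~(\ref{eqn:cocycle}) as printed has $(\det\boldsymbol{x},\det\boldsymbol{x})_{2n}^c$. With the printed formula the two determinantal contributions to the commutator are $(x,-1)_{2n}^{c(\sum\lambda_i)^2}(y,-1)_{2n}^{-c(\sum\mu_j)^2}$, which does not collapse to $(x,y)_n^{c(\sum\lambda_i)(\sum\mu_j)}$; your ``combine into a $2n$-th symbol raised to the power $2c$'' step only goes through for the mixed symbol $(\det\boldsymbol{x},\det\boldsymbol{y})_{2n}^c$. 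This is almost certainly a typo in the displayed cocycle (the Kazhdan--Patterson form and the source \cite{Frechette} use the mixed version), but you should flag it explicitly rather than silently correct it.
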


Conflating the bilinear form with its corresponding matrix, we will denote both by $B_{c,d}$; we hope this abuse of notation will not cause any confusion. Note: in \cite{Frechette}, this bilinear form is parametrized slightly differently as $B_{b,c}$, where $b = c-d$.

Now that we have an explicit description of our metaplectic covers, we investigate what this parametrization tells us about space of metaplectic Whittaker functions. For the purposes of this paper, we will not need the constructions of the metaplectic Whittaker functions themselves, nor those of the metaplectic principal series representations on which they are defined. Instead, we will use the following theorem of McNamara to investigate the space of Whittaker functions through its connection to the metaplectic torus. For the definitions of the metaplectic principal series representations and their Whittaker functions, we refer the reader to Sections 6 and 8, respectively, of \cite{McNrep} as a convenient source.

\begin{thm}[McNamara \cite{McNrep}]
Fix a metaplectic cover $\widetilde{G}$ over a $p$-adic reductive group $G$ and let $\mathfrak{W}$ be the space of metaplectic Whittaker functions for a principal series representation on $\widetilde{G}$. Let the \emph{metapletic torus} $\widetilde{T}$ be the preimage in $\widetilde{G}$ of the torus $T(F)$, and let $H$ be the maximal abelian subgroup of $\widetilde{T}$. Then
\[ \dim(\mathfrak{W}) = \left|\widetilde{T}/H\right|.\]
\end{thm}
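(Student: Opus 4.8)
The plan is to follow McNamara's strategy, which reduces the computation of $\dim(\mathfrak{W})$ to understanding how the non-abelian structure of $\widetilde{T}$ propagates through the induction defining the metaplectic principal series and is then detected by a nondegenerate character on the unipotent radical. First I would record the structural fact that $\widetilde{T}$ is a two-step nilpotent (Heisenberg-type) group: the commutator formula $[\boldsymbol{x},\boldsymbol{y}] = (x,y)_n^{B(\lambda,\mu)}$ shows $[\widetilde{T},\widetilde{T}] \subseteq \mu_n \subseteq Z(\widetilde{T})$, and since $F^\times/(F^\times)^n$ is finite, so is the quotient of $\widetilde{T}$ by its center. Consequently, after fixing a genuine central character, the Stone--von Neumann and Mackey machinery applies: for any maximal abelian subgroup $H \le \widetilde{T}$ one extends the central character to a character of $H$ and forms $i(\chi) := \mathrm{Ind}_{H}^{\widetilde{T}} \chi$, an irreducible representation of dimension $[\widetilde{T}:H]$ whose isomorphism class is independent of the choice of $H$ and of the extension. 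The principal series is then $I(\chi) = \mathrm{Ind}_{\widetilde{B}}^{\widetilde{G}} i(\chi)$, where $i(\chi)$ is inflated to the metaplectic Borel $\widetilde{B} = \widetilde{T}\widetilde{N}$ using the standard fact that the cover splits over the unipotent radical $N$.

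Next I would compute the space of Whittaker functionals $\mathrm{Hom}_{N}(I(\chi),\psi)$, which is linearly dual to $\mathfrak{W}$ and hence of the same dimension. Restricting $I(\chi)$ to $N$ and using the Bruhat decomposition $G = \bigsqcup_{w \in W} BwB$ produces a filtration of $I(\chi)|_N$ indexed by the Weyl group, each graded piece being an $N$-representation supported on a single cell. The key point is that nondegeneracy of $\psi$ forces $\mathrm{Hom}_N$ of every cell except the open cell $Bw_0B$ to vanish: for $w \ne w_0$ some simple root subgroup is absorbed into the stabilizer and must act trivially, which is incompatible with a nondegenerate $\psi$. For the open cell, Frobenius reciprocity --- equivalently, computing the twisted Jacquet module $(\,\cdot\,)_{N,\psi}$ of the open-cell contribution --- identifies $\mathrm{Hom}_N(\text{open cell},\psi)$ with the linear dual of the inducing representation $i(\chi)$, up to a modulus-character twist that does not affect dimension. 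Therefore $\dim \mathrm{Hom}_{N}(I(\chi),\psi) = \dim i(\chi) = [\widetilde{T}:H]$, and dualizing gives $\dim(\mathfrak{W}) = |\widetilde{T}/H|$.

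The main obstacle is the open-cell analysis: in the $p$-adic smooth (and non-unitary) induction one must check carefully that the only surviving contribution to a Whittaker functional is the big cell, that the relevant Jacquet functors are exact and land in finite-dimensional spaces here, and that the metaplectic cocycle --- trivial on $N$ but genuinely twisting $\widetilde{T}$ --- interacts with the intertwining maps so that the open-cell term recovers exactly $i(\chi)^{*}$ rather than a proper subquotient. A secondary point is the independence of the construction from the choice of maximal abelian $H$, which is precisely Stone--von Neumann uniqueness and is what makes $|\widetilde{T}/H|$ a well-defined invariant; as a sanity check, in the linear case $\widetilde{T}$ is abelian, $H = \widetilde{T}$, and one recovers the classical one-dimensionality of the Whittaker model.
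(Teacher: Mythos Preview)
The paper does not prove this statement at all: it is quoted as a result of McNamara \cite{McNrep}, and the surrounding text explicitly refers the reader to Sections~6 and~8 of that paper for the construction of the metaplectic principal series and its Whittaker functionals. So there is no ``paper's own proof'' to compare against.

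Your sketch is a faithful outline of McNamara's argument: the Heisenberg/Stone--von~Neumann description of genuine irreducible $\widetilde{T}$-representations as $\mathrm{Ind}_H^{\widetilde{T}}\chi$ of dimension $[\widetilde{T}:H]$, the inflation to $\widetilde{B}$ via the splitting over $N$, and the Bruhat-cell filtration of $I(\chi)|_N$ with only the open cell supporting a $(N,\psi)$-functional. The places you flag as obstacles---exactness of the twisted Jacquet functor on the filtration, the vanishing for $w\ne w_0$ via nondegeneracy of $\psi$, and the identification of the open-cell contribution with $i(\chi)^*$---are exactly the technical points that McNamara (building on Rodier and Casselman--Shalika in the reductive case) has to verify, and your description of why they hold is accurate. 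One small wording issue: calling $\mathrm{Hom}_N(I(\chi),\psi)$ ``linearly dual to $\mathfrak{W}$'' is slightly imprecise; the clean statement is that the space of spherical Whittaker functions is the image of the (one-dimensional) space of spherical vectors under the space of Whittaker functionals, so its dimension equals $\dim\mathrm{Hom}_N(I(\chi),\psi)$. This does not affect the argument.
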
 
Note: the group $T$ of diagonal matrices is denoted $T$ because it is an abelian \emph{torus}, that is, it is isomorphic to $(F^\times)^r$. While we call $\widetilde{T}$ the \emph{metaplectic torus}, it is no longer abelian, nor is it technically a torus, as its elements are $(\zeta, t)$ where $\zeta \in \mu_n$ (where $\mu_n \subsetneq F$) and $t\in T$. Investigating the precise structure of $\widetilde{T}$, we prove the following theorem, which is a restatement of Theorem \ref{thm: Frechette}.

\begin{thm} \label{thm:restate1.1}
For a metaplectic cover $\widetilde{G}$ of $GL_r(F)$ corresponding to the bilinear form $B_{c,d}$,
\[\left|\widetilde{T}/H\right| = \frac{n^r}{\left|\left\{\boldsymbol{x}\in (\Z/n\Z)^r: B_{c,d}(\boldsymbol{x}, \boldsymbol{y}) \equiv 0 \pmod{n} \text{ for all } \boldsymbol{y}\in(\Z/n\Z)^r\right\}\right|}.\]
\end{thm}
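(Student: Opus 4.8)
The plan is to combine McNamara's theorem with the explicit structure of $\widetilde{T}$ as a central extension of $T(F)\cong(F^\times)^r$, turning the computation of $\lvert\widetilde{T}/H\rvert$ into a problem about a bilinear pairing on a finite abelian group. First I would use the coweight basis to identify $T(F)$ with $(F^\times)^r$. Since $\mu_n=\ker p$ is central, the group commutator descends to a well-defined bimultiplicative pairing $\bar{c}\colon T(F)\times T(F)\to\mu_n$, $\bar{c}(\boldsymbol{s},\boldsymbol{t})=[\widetilde{\boldsymbol{s}},\widetilde{\boldsymbol{t}}]$, and bimultiplicativity together with the Brylinski--Deligne commutator formula $[\boldsymbol{x},\boldsymbol{y}]=(x,y)_n^{B(\lambda,\mu)}$ (applied with $\lambda=\varepsilon_i^\vee$, $\mu=\varepsilon_j^\vee$) gives
\[
\bar{c}(\boldsymbol{s},\boldsymbol{t})=\prod_{i,j}(s_i,t_j)_n^{(B_{c,d})_{ij}}.
\]
Because the $n$-th Hilbert symbol kills $n$-th powers, $\bar{c}$ factors through the finite group $\bar{T}:=\bigl(F^\times/(F^\times)^n\bigr)^r$. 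Next I would observe that a subgroup of $\widetilde{T}$ containing $\mu_n$ is abelian exactly when its image in $T(F)$ is $\bar{c}$-isotropic; since every maximal abelian subgroup contains the central $\mu_n$, it follows that for any maximal $\bar{c}$-isotropic $\bar{L}\subseteq\bar{T}$ the group $H:=p^{-1}(\pi^{-1}(\bar{L}))$ (with $\pi\colon T(F)\to\bar{T}$) is maximal abelian, and $\lvert\widetilde{T}/H\rvert=[\bar{T}:\bar{L}]$. So it suffices to produce one maximal isotropic subgroup of $\bar{T}$ and compute its index.

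The core of the proof is the analysis of $\bar{c}$ through the unit subgroup. Let $W\le\bar{T}$ be the image of $(\mathfrak{o}^\times)^r$. Standard local-field facts (valid since $n$ is prime to the residue characteristic) — $\mathfrak{o}^\times\cap(F^\times)^n=(\mathfrak{o}^\times)^n$ and $\mathfrak{o}^\times/(\mathfrak{o}^\times)^n\cong\Z/n\Z$ — give $W\cong(\Z/n\Z)^r$ and $\bar{T}/W\cong(\Z/n\Z)^r$ via the vector of valuations, and $W$ is $\bar{c}$-isotropic because the Hilbert symbol of two units is trivial. An isotropic $W$ produces a homomorphism $\phi\colon\bar{T}/W\to\widehat{W}:=\operatorname{Hom}(W,\mu_n)$, $\boldsymbol{x}W\mapsto\bar{c}(\boldsymbol{x},-)|_W$, and the crucial computation I would carry out is that, under the identifications $\bar{T}/W\cong(\Z/n\Z)^r$ (valuations) and $\widehat{W}\cong(\Z/n\Z)^r$ (using that $u\mapsto(\varpi,u)_n$ is an isomorphism $\mathfrak{o}^\times/(\mathfrak{o}^\times)^n\xrightarrow{\sim}\mu_n$, itself a consequence of the nondegeneracy of the Hilbert symbol), $\phi$ is exactly multiplication by the matrix $B_{c,d}$; this falls out of $\bar{c}(\boldsymbol{x},\boldsymbol{w})=\prod_{i,j}(\varpi,w_j)_n^{\,v(x_i)(B_{c,d})_{ij}}$ for $\boldsymbol{w}\in W$. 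Consequently $\ker\phi=\{\boldsymbol{a}\in(\Z/n\Z)^r:B_{c,d}\boldsymbol{a}\equiv0\pmod n\}=\Lambda_{fin}$ and $\lvert\operatorname{im}\phi\rvert=n^r/\lvert\Lambda_{fin}\rvert$.

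Finally I would take $\bar{L}_0\le\bar{T}$ to be the preimage of $\ker\phi=\Lambda_{fin}$ under $\bar{T}\to\bar{T}/W$, so that $W\le\bar{L}_0$ and $[\bar{T}:\bar{L}_0]=[\bar{T}/W:\ker\phi]=\lvert\operatorname{im}\phi\rvert=n^r/\lvert\Lambda_{fin}\rvert$, and then check that $\bar{L}_0$ is a maximal $\bar{c}$-isotropic subgroup. It is isotropic: for $\boldsymbol{x},\boldsymbol{y}\in\bar{L}_0$ the valuation vectors lie in $\Lambda_{fin}$, so $B_{c,d}$ annihilates them mod $n$ and, expanding $\bar{c}(\boldsymbol{x},\boldsymbol{y})$ by separating units from powers of $\varpi$, every exponent that occurs is $\equiv0\pmod n$. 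It is maximal: any $\bar{c}$-isotropic subgroup containing $W$ must be annihilated by $\phi$, hence is contained in $\bar{L}_0$. Combining this with the dictionary from the first step, the subgroup $H=p^{-1}(\pi^{-1}(\bar{L}_0))$ is maximal abelian, and McNamara's theorem yields
\[
\lvert\widetilde{T}/H\rvert=[\bar{T}:\bar{L}_0]=\frac{n^r}{\lvert\Lambda_{fin}\rvert},
\]
which is the asserted formula.

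I expect the main obstacle to be the bilinear-algebra identification in the middle paragraph: recognizing that, in the right coordinates, the commutator pairing is literally $B_{c,d}$ reduced modulo $n$, with $W$ playing the role of a canonical isotropic complement through which the relevant information is read off. Two technical points will need care. The first is marshalling the local-field inputs — $\mathfrak{o}^\times/(\mathfrak{o}^\times)^n\cong\Z/n\Z$, triviality of the Hilbert symbol on units, and nondegeneracy of the Hilbert symbol on $F^\times/(F^\times)^n$ — all of which rely on $n$ being prime to the residue characteristic. The second is a parity subtlety when $n$ is even: the Hilbert symbol need not be alternating, so $(\varpi,\varpi)_n$ may equal $-1$ and contributes an extra factor of the shape $\epsilon^{\,\boldsymbol{a}^\top B_{c,d}\boldsymbol{b}}$ when one expands $\bar{c}$ on pairs with nonzero valuations; one must verify this factor does not obstruct the isotropy of $W$ or of $\bar{L}_0$, which it does not, since on $W$ the valuation vectors vanish and on $\bar{L}_0$ the exponent $\boldsymbol{a}^\top B_{c,d}\boldsymbol{b}$ is already $\equiv0\pmod n$ and hence $\pmod 2$.
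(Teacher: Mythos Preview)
Your argument is correct and reaches the same identification $|\widetilde{T}/H| = |(\Z/n\Z)^r/\Lambda_{fin}|$ as the paper, but by a different and more self-contained route. The paper's proof simply invokes McNamara's structural description $H = \mu_n \times T(\mathfrak{o}) \times \Lambda$ as a set, then reads off $|\widetilde{T}/H| = |Y/\Lambda|$ and reduces modulo $n$; the identification of $\Lambda$ via the commutator formula is stated rather than derived. You instead rebuild $H$ from scratch: you pass to the finite quotient $\bar{T}=(F^\times/(F^\times)^n)^r$, recognize the commutator as a bilinear pairing, exhibit the unit subgroup $W$ as an isotropic ``Lagrangian half,'' and then show that the induced map $\phi\colon\bar{T}/W\to\widehat{W}$ is literally $B_{c,d}$ in coordinates, so that the preimage of $\ker\phi$ is a maximal isotropic subgroup with the desired index. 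What your approach buys is an explicit verification that the subgroup you write down really is maximal abelian (via the clean observation that any isotropic enlargement of $\bar{L}_0$ contains $W$ and hence lies in $\ker\phi$), together with a transparent explanation of \emph{why} $B_{c,d}$ appears; the cost is the local-field bookkeeping you flag (tameness hypotheses on $n$, the $(\varpi,\varpi)_n$ sign when $n$ is even), which the paper sidesteps by citing McNamara. Your handling of both technical points is sound: on $W$ the valuation vectors vanish, and on $\bar{L}_0$ the exponent $\boldsymbol{a}^\top B_{c,d}\boldsymbol{b}$ is already $\equiv 0\pmod{n}$, so the possible sign contributes nothing.
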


\begin{proof}
Using our description of the metaplectic covers, we can express the subgroups $\widetilde{T}$ and $H$ more explicitly: using the Iwasawa decomposition of $GL_r(F)$, we have that $\widetilde{T} = \mu_n \times T(\mathfrak{o}) \times Y$ as a set. That is, for $(\zeta, t) \in T,$ we may write $t = t_0 \cdot \lambda(\varpi)$ for some $t_0\in T(\mathfrak{o})$ and $\lambda \in Y$. 

Since $H$ is a subgroup of $\widetilde{T}$, its elements also look like $(\zeta, h)$ where $\zeta$ is an $n$-th root of unity and $h$ is a diagonal matrix with entries in $F$. Examining the group law on $\widetilde{G}$, we see that the root of unity does not impede commutativity of elements, so it is the matrix component $h$ we must examine further to obtain a description of $H$. To do so, recall that $\mathfrak{o}$ is the valuation ring of $F$ and $\varpi$ the uniformizing element. Then by \cite{McNrep}, as a set we have $H = \mu_n\times T(\mathfrak{o})\times \Lambda$, where  $\Lambda$ is the free abelian group 
\[ \Lambda := \{\lambda\in Y: s(\lambda(\varpi)) \in H\}\]
for $s: G\rightarrow \widetilde{G}$ the standard section $s(g) = (1,g)$.
Using the commutator relation and the fundamental coweight basis for $Y$, an equivalent description for $\Lambda$ is
\begin{equation}\label{Lambdadef}
 \Lambda = \left\{\boldsymbol{x}\in \Z^r: B_{c,d}(\boldsymbol{x}, \boldsymbol{y}) \equiv 0 \pmod{n} \text{ for all } \boldsymbol{y}\in\Z^r\right\}.
 \end{equation}

It is useful to think of the group $\Lambda$ as controlling the powers of $\varpi$ in each entry on the diagonal of the matrix $h$. That is, for any element $(\zeta, h) \in H$, we have $h = h_0 \cdot \text{diag}(\varpi^{\lambda_1},...,\varpi^{\lambda_r})$ where $h_0 \in T(\mathfrak{o})$ and $\lambda = \lambda_1\varepsilon_1^\vee + \cdots+ \lambda_r\varepsilon_r^\vee$ is in $\Lambda$. 

Then, combining our descriptions of $\widetilde{T}$ and $H$ to consider $\widetilde{T}/H$, we see that 
\[ |\widetilde{T}/H| = \left|Y/\Lambda\right| = \left| \Z^r/ \Lambda\right|, \]
where the last description uses the embedding of $\Lambda$ in $\Z^r$ described in \eqref{Lambdadef}.
Notice that if $\lambda_i \in n\Z$ for all $i$, then $B((\lambda_1,...,\lambda_r),\boldsymbol{y})$ will automatically be a multiple of $n$ for any $\boldsymbol{y}\in \Z^r$, and therefore $\lambda = \lambda_1\varepsilon_1^\vee + \cdots+ \lambda_r\varepsilon_r^\vee$ will be in $\Lambda$. Therefore, it suffices to consider all coordinates $\lambda_i$ mod $n$, and so
\[|\widetilde{T}/H| = \left|(\Z/n\Z)^r / \left(\Lambda \cap (\Z/n\Z)^r\right) \right|\]
which completes the proof.\qedhere
\end{proof}

Let $\Lambda_{fin}:=\left\{\boldsymbol{x}\in (\Z/n\Z)^r: B_{c,d}(\boldsymbol{x}, \boldsymbol{y}) \equiv 0 \pmod{n} \text{ for all } \boldsymbol{y}\in(\Z/n\Z)^r\right\}.$ We will spend the next several sections developing two related systems of linear Diophantine equations which allow us to describe the elements in $\Lambda_{fin}$, each of which will give us a distinct formula for $|\Lambda_{fin}|$. We will then return to the broader framework in Section \ref{Structure} to what these different formulae tell us about the structure of $|\widetilde{T}/H|$ and thus the structure of $\mathfrak{W}$.

\section{Cocharacter Diophantine Equations and Phenomena}\label{Cochar}

In this section, we use the natural basis for the cocharacter lattice $Y$ of $GL_r(F)$ to develop a set of $r$ linear Diophantine equations in terms of $c,d,$ and $n$ that describe the set $\Lambda_{fin}$, which we call the \emph{cocharacter equations}. This perspective turns a representation theoretic question into a linear algebra one, where altering each of the parameters $c,d,r,$ and $n$ has a different effect on the system. We also take time now to develop a visual framework which illuminates this distinction in the roles of each of our parameters.

Examining the conditions for $\Lambda_{fin}$ using the viewpoint of the fundamental coweight basis for $Y$ (see Section \ref{MWhit}), we arrive at the following system of $r$ equations. Let $\boldsymbol{0}_r = (0, 0, \dots, 0)^T$ be the $r \times 1$ column vector with all entries equal to $0$, and define $\boldsymbol{1}_r = (1, 1, \dots, 1)^T$ similarly. Recall that $B_{c,d}$ is both the bilinear form given in Theorem \ref{thrm: background} and its corresponding $r \times r$ matrix.

\begin{definition}
For natural numbers $r, n \geq 1$ and constants $c, d \in \Z$, we call the following system of equations the \textbf{cocharacter equations}:
\begin{equation*}
    B_{c,d} \cdot \boldsymbol{x}
    = \boldsymbol{0}_r  \pmod{n}.
\end{equation*}
That is, for $\boldsymbol{x} = (x_1,...,x_r)^T$, we have
\begin{align*}
    cx_1 + dx_2 + \cdots + dx_r &\equiv 0\pmod{n}\\
    dx_1 + cx_2 + \cdots + dx_r &\equiv 0\pmod{n}\\
    &\vdots\\
    dx_1 + dx_2 + \cdots + cx_r &\equiv 0\pmod{n}
\end{align*}
\end{definition}

Here, the $i$-th equation arises from evaluating $\boldsymbol{x}\in Y$ against $\varepsilon_i^\vee$ in the bilinear form $B_{c,d}$ for each $i\in\{1,...,r\}$. Thus, Lemma \ref{lemma: lambdasize} follows directly.

\begin{lemma}
\label{lemma: lambdasize}
Let $S_{cochar}(c,d,r,n)$ be the number of solutions $\boldsymbol{x}\in (\Z/n\Z)^r$ to the cocharacter equations. Then, for the cover $\widetilde{G}_{c,d,r,n}$, we have $S_{cochar}(c,d,r,n) = |\Lambda_{fin}|$.
\end{lemma}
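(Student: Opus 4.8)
The plan is to observe that Lemma \ref{lemma: lambdasize} is essentially a definitional unpacking: both $S_{cochar}(c,d,r,n)$ and $|\Lambda_{fin}|$ are counts of solutions in $(\Z/n\Z)^r$ to the same congruence conditions, once we recognize that the quantifier ``$B_{c,d}(\boldsymbol{x},\boldsymbol{y})\equiv 0$ for all $\boldsymbol{y}$'' can be replaced by the finite system obtained from letting $\boldsymbol{y}$ range over a generating set. So the first step is to fix the standard basis $\boldsymbol{e}_1,\dots,\boldsymbol{e}_r$ of $(\Z/n\Z)^r$ (corresponding to the fundamental coweights $\varepsilon_i^\vee$) and note that by bilinearity over $\Z/n\Z$, the condition $B_{c,d}(\boldsymbol{x},\boldsymbol{y})\equiv 0 \pmod n$ holding for all $\boldsymbol{y}\in(\Z/n\Z)^r$ is equivalent to it holding for each $\boldsymbol{y} = \boldsymbol{e}_i$, $i=1,\dots,r$.

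Next I would compute $B_{c,d}(\boldsymbol{x},\boldsymbol{e}_i)$ explicitly using the matrix form from Theorem \ref{thrm: background}: since $B_{c,d}(\boldsymbol{x},\boldsymbol{e}_i) = \boldsymbol{x}^T B_{c,d}\,\boldsymbol{e}_i$ picks out the $i$-th column of $B_{c,d}$ paired against $\boldsymbol{x}$, and $B_{c,d}$ is symmetric, this equals $dx_1 + \cdots + dx_{i-1} + cx_i + dx_{i+1} + \cdots + dx_r$. Setting each of these $\equiv 0 \pmod n$ is exactly the $i$-th cocharacter equation, i.e. the system $B_{c,d}\cdot\boldsymbol{x} \equiv \boldsymbol{0}_r \pmod n$. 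Hence $\boldsymbol{x}\in(\Z/n\Z)^r$ satisfies the cocharacter equations if and only if $\boldsymbol{x}\in\Lambda_{fin}$, which gives $S_{cochar}(c,d,r,n) = |\Lambda_{fin}|$.

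There is no serious obstacle here; the only thing requiring a line of care is the justification that testing $\boldsymbol{y}$ on a basis suffices — one should note that $B_{c,d}$ descends to a well-defined bilinear pairing on $(\Z/n\Z)^r$ (its entries are integers, so reduction mod $n$ is unambiguous) and that $\boldsymbol{y}\mapsto B_{c,d}(\boldsymbol{x},\boldsymbol{y}) \bmod n$ is $\Z/n\Z$-linear, so it vanishes identically iff it vanishes on a spanning set. I would also remark, to connect with the text preceding the lemma, that the $i$-th equation is precisely the evaluation of $\boldsymbol{x}$ against $\varepsilon_i^\vee$, which is exactly the remark the authors make right before stating the lemma (``the $i$-th equation arises from evaluating $\boldsymbol{x}\in Y$ against $\varepsilon_i^\vee$''). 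So the proof is really just: restrict the universal quantifier in the definition of $\Lambda_{fin}$ to the coweight basis, read off the resulting $r$ congruences, and recognize them as the cocharacter equations.
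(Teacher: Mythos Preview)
Your proposal is correct and is exactly the argument the paper has in mind: the authors state just before the lemma that the $i$-th cocharacter equation arises from evaluating $\boldsymbol{x}$ against $\varepsilon_i^\vee$, and then say ``Thus, Lemma \ref{lemma: lambdasize} follows directly,'' with no further proof given. You have simply spelled out the one-line justification (bilinearity lets you test on a spanning set) that the paper leaves implicit.
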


Looking at the values of $S_{cochar}$ for a fixed $r$ and $n$ as we range over $c$ and $d$, certain patterns emerge which motivate defining constants which we call the \emph{diagonal numbers}. These constants will be fundamental in our formulas for $S_{cochar}$, so we take the time to explore them now.

For a fixed $r$ and $n$, note first that it suffices to understand $S_{cochar}$ for $c,d\pmod{n}$, as $S_{cochar}(c,d,r,n) = S_{cochar}(c',d,r,n)$ for $c\equiv c'\pmod{n}$ and likewise for $d$. It will be useful to visualize the values of $S_{cochar}$ as a table ranging over $c,d \in \Z/n\Z$ in the following manner:

\begin{center}
\small\begin{tikzpicture}

\draw[] (-.75,4.25) -- (3.5,4.25);
\draw[] (0,1.5) -- (0,5);
\draw[black!50] (-.5,3.75) -- (3.5,3.75);
\draw[black!50] (-.5,3.25) -- (3.5,3.25);
\draw[black!50] (-.5,2.75) -- (3.5,2.75);
\draw[black!50] (-.5,2.1) -- (3.5,2.1);
\draw[] (-.5,1.5) -- (3.5,1.5);

\draw[black!50] (0.5,1.5) -- (0.5,4.75);
\draw[black!50] (1,1.5) -- (1,4.75);
\draw[black!50] (1.5,1.5) -- (1.5,4.75);
\draw[black!50] (2.125,1.5) -- (2.125,4.75);
\draw[] (3.5,1.5) -- (3.5,4.75);

\node at (-.25,4.875) {$d$};
\node at (0.25,4.5) {0};
\node at (0.75,4.5) {1};
\node at (1.25,4.5) {2};
\node at (1.875,4.5) {$\cdots$};
\node at (2.75,4.5) {$(n-1)$};

\node at (-0.675,4.5) {$c$};
\node at (-.25,4) {0};
\node at (-.25,3.5) {1};
\node at (-.25,3) {2};
\node at (-.25,2.5) {$\vdots$};
\node at (-.6,1.75) {$(n-1)$};

\end{tikzpicture}
\end{center}

\begin{figure}[h]
    \centering
\scalebox{0.9}{
\begin{tikzpicture}

\node at (-1,0) {$\begin{matrix}
49 & 1 & 1 & 1 & 1 & 1 & 1\\
1 & 7 & 1 & 1 & 1 & 1 & 7\\
1 & 1 & 7 & 1 & 1 & 7 & 1\\
1 & 1 & 1 & 7 & 7 & 1 & 1\\
1 & 1 & 1 & 7 & 7 & 1 & 1\\
1 & 1 & 7 & 1 & 1 & 7 & 1\\
1 & 7 & 1 & 1 & 1 & 1 & 7
\end{matrix}$};
\draw[red] (-2.2,-1.45) -- (0.95,1.2);
\draw[red] (-2.8,1.5) -- (0.95, -1.4);
\draw[red] (-2.8,1) -- (-2.3, 1.5);
\node at (-1,-2.2){$r=2,$ $n=7$};

\node at (4.5,0) {$\begin{matrix}
64 & 1 & 4 & 1 & 16 & 1 & 4 & 1\\
1 & 8 & 1 & 8 & 1 & 8 & 1 & 8\\
4 & 1 & 16 & 1 & 4 & 1 & 16 & 1\\
1 & 8 & 1 & 8 & 1 & 8 & 1 & 8\\
16 & 1 & 4 & 1 & 32 & 1 & 4 & 1\\
1 & 8 & 1 & 8 & 1 & 8 & 1 & 8\\
4 & 1 & 16 & 1 & 4 & 1 & 16 & 1\\
1 & 8 & 1 & 8 & 1 & 8 & 1 & 8
\end{matrix}$};
\draw[red] (2.7,-1.6) -- (6.9,1.2);
\draw[Green] (2.1,-1.2) -- (6.3,1.6);
\draw[blue] (2.1,-0.4) -- (5,1.6);
\draw[Green] (2.1,0.4) -- (3.8,1.6);

\draw[Green] (4.1,-1.6) -- (6.9,0.4);
\draw[blue] (5.3,-1.6) -- (6.9,-0.4);

\draw[red] (2.1,1.65) -- (6.9, -1.6);
\draw[red] (2.1,1.3) -- (2.6, 1.65);
\draw[Green] (2.1,0.8) -- (5.6, -1.6);
\draw[blue] (2.1,0) -- (4.5, -1.6);
\draw[Green] (2.1,-0.8) -- (3.2, -1.6);

\draw[Green] (3.5,1.65) -- (6.9, -0.8);
\draw[blue] (4.6,1.65) -- (6.9, 0.1);
\draw[Green] (6,1.65) -- (6.9, 0.9);
\node at (4.5,-2.2){$r=2$, $n=8$};

\node at (11.5,0) {$\begin{matrix}
81 & 1 & 1 & 9 & 1 & 1 & 9 & 1 & 1\\
1 & 9 & 3 & 1 & 3 & 3 & 1 & 3 & 9\\
1 & 3 & 9 & 1 & 3 & 3 & 1 & 9 & 3\\
9 & 1 & 1 & 27 & 1 & 1 & 27 & 1 & 1\\
1 & 3 & 3 & 1 & 9 & 9 & 1 & 3 & 3\\
1 & 3 & 3 & 1 & 9 & 9 & 1 & 3 & 3\\
9 & 1 & 1 & 27 & 1 & 1 & 27 & 1 & 1\\
1 & 3 & 9 & 1 & 3 & 3 & 1 & 9 & 3\\
1 & 9 & 3 & 1 & 3 & 3 & 1 & 3 & 9
\end{matrix}$};
\draw[red] (9.6,-1.8) -- (14.1,1.5);
\draw[blue] (9,0.3) -- (11.2,1.9);
\draw[blue] (9,-1) -- (13,1.9);

\draw[blue] (11.4,-1.8) -- (14.1,0.2);
\draw[blue] (13.2,-1.8) -- (14.1,-1.1);

\draw[red] (9,1.9) -- (14.1, -1.8);
\draw[red] (9,1.5) -- (9.4, 1.9);
\draw[blue] (9,0.6) -- (12.3, -1.8);
\draw[blue] (9,-0.6) -- (10.5, -1.8);

\draw[blue] (10.8,1.9) -- (14.1, -0.6);
\draw[blue] (12.5,1.9) -- (14.1, 0.6);

\node at (11.5,-2.2){$r=2$, $n=9$};

\node at (1,-5.5) {$\begin{matrix}
729 & 1 & 1 & 27 & 1 & 1 & 27 & 1 & 1\\
1 & 81 & 1 & 1 & 27 & 1 & 1 & 27 & 1\\
1 & 1 & 81 & 1 & 1 & 27 & 1 & 1 & 27\\
27 & 1 & 1 & 243 & 1 & 1 & 27 & 1 & 1\\
1 & 27 & 1 & 1 & 81 & 1 & 1 & 27 & 1\\
1 & 1 & 27 & 1 & 1 & 81 & 1 & 1 & 27\\
27 & 1 & 1 & 27 & 1 & 1 & 243 & 1 & 1\\
1 & 27 & 1 & 1 & 27 & 1 & 1 & 81 & 1\\
1 & 1 & 27 & 1 & 1 & 27 & 1 & 1 & 81\\
\end{matrix}$};
\node at (1,-8){$r=3$, $n=9$};
\draw[red] (-2.4,-3.6) -- (4.4,-7.3);
\draw[blue] (-0.1,-3.6) -- (4.4,-6.1);
\draw[blue] (2.2,-3.6) -- (4.4,-4.8);
\draw[blue] (-2.3,-4.8) -- (2,-7.3);
\draw[blue] (-2.3,-6.2) -- (-0.2,-7.3);

\draw[red] (-2.3,-4.1) -- (-1.7, -3.5);
\draw[red] (-1.8,-7.4) -- (1.6, -3.5);
\draw[red] (1.5,-7.4) -- (4.4,-4.3);
\draw[blue] (-2.3,-5.4) -- (-0.8, -3.6);
\draw[blue] (-2.3,-6.6) -- (0.5, -3.6);

\draw[blue] (-0.8,-7.4) -- (2.8, -3.6);
\draw[blue] (0.5,-7.4) -- (3.9, -3.6);
\draw[blue] (2.8,-7.4) -- (4.4, -5.5);

\node at (10,-5.5) {$\begin{matrix}
4096 & 1 & 16 & 1 & 256 & 1 & 16 & 1\\
1 & 512 & 1 & 16 & 1 & 128 & 1 & 16\\
16 & 1 & 1024 & 1 & 16 & 1 & 256 & 1\\
1 & 16 & 1 & 512 & 1 & 16 & 1 & 128\\
256 & 1 & 16 & 1 & 2048 & 1 & 16 & 1\\
1 & 128 & 1 & 16 & 1 & 512 & 1 & 16\\
16 & 1 & 256 & 1 & 16 & 1 & 1024 & 1\\
1 & 16 & 1 & 128 & 1 & 16 & 1 & 512\\
\end{matrix}$};
\draw[red] (6.1,-3.8) -- (13.9,-7.1);
\draw[Green] (6.3,-4.6) -- (12,-7.1);
\draw[blue] (6.3,-5.5) -- (10,-7.1);
\draw[Green] (6.3,-6.3) -- (8,-7.1);
\draw[red] (6.2,-4.5) -- (6.8,-3.7);
\draw[Green] (8.2,-3.8) -- (13.7,-6.3);
\draw[blue] (10,-3.8) -- (13.7,-5.4);
\draw[Green] (12.1,-3.8) -- (13.7,-4.5);

\draw[red] (6.9,-7.1) -- (9.3,-3.8);
\draw[red] (9.5,-7.2) -- (11.9,-3.8);
\draw[red] (12.1,-7.1) -- (13.8,-4.8);

\draw[Green] (6.2,-5.3) -- (7.4,-3.8);
\draw[blue] (6.2,-6.1) -- (8.1,-3.8);
\draw[Green] (6.2,-7.1) -- (8.7,-3.8);
\draw[Green] (7.5,-7.1) -- (9.9,-3.8);
\draw[blue] (8.2,-7.1) -- (10.6,-3.8);
\draw[Green] (8.9,-7.1) -- (11.3,-3.8);
\draw[Green] (10.1,-7.1) -- (12.5,-3.8);
\draw[blue] (10.8,-7.1) -- (13.2,-3.8);
\draw[Green] (11.4,-7.1) -- (13.7,-4.1);
\draw[Green] (12.8,-7.1) -- (13.7, -5.8);

\node at (10,-8){$r=4$, $n=8$};
\end{tikzpicture}}
    \caption{Examples of the Cocharacter Phenomena for different choices of $r$ and $n$. In each example, notice that there is one set of diagonals of slope -1 and one of slope $r-1$: the former indicate the effect of the \emph{first diagonal numbers} $\done$ and the latter that of the \emph{second diagonal numbers} $\dtwo$. Diagonals for the same diagonal numbers (greater than 1) are marked with the same color within each example. For instance, in the second example ($r=2,n=8$) red marks diagonal numbers equal to 8, blue equal to 4, and green equal to 2.}
    \label{fig:diagexamples}
\end{figure}
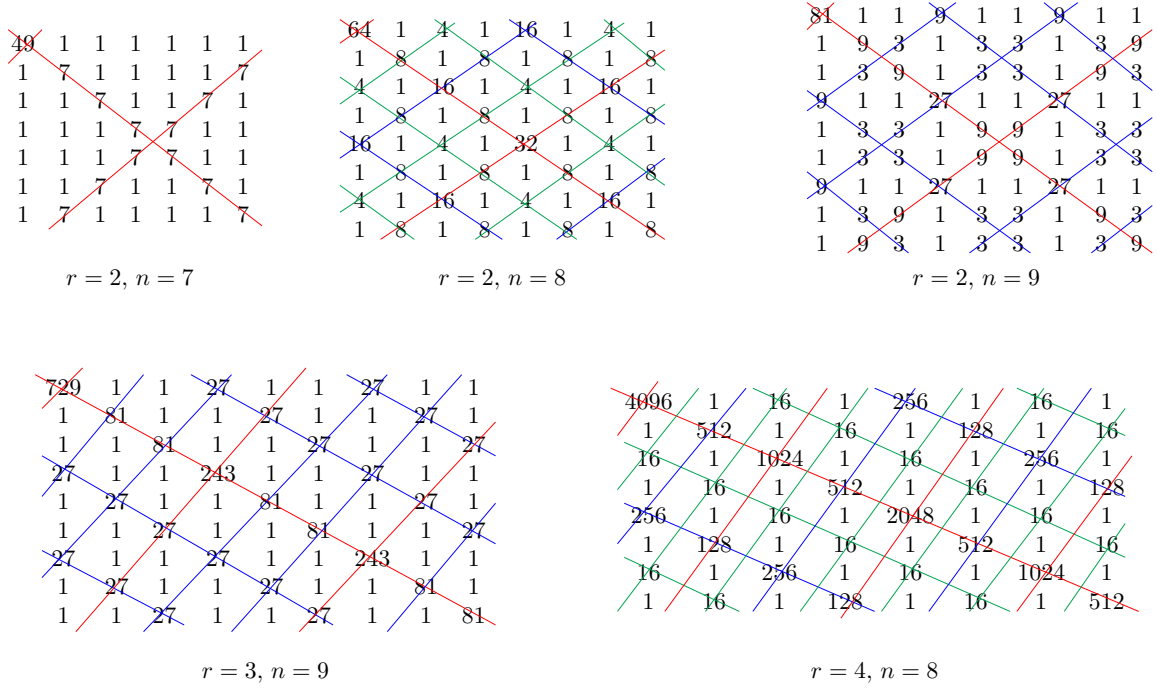

Examining Figure \ref{fig:diagexamples}, which contains several examples of these tables, notice that the values of $S_{cochar}$ on the marked diagonals in each picture are each divisible by common factors and that there are two sets of diagonals in each picture. Motivated by this phenomena, we assign each entry a set of two diagonal numbers.

\begin{definition}
Let $\done = gcd(c-d,n)$ be the \emph{first diagonal number} and define $\dtwo = gcd(c+(r-1)d,n)$ to be the \emph{second diagonal number}.
\end{definition}

Note that for a specific entry in place $c,d$, its first diagonal number captures the column $c-d$ where its diagonal of slope $-1$ intersects the first row and similarly, the second diagonal number identifies the row where its diagonal of slope $r-1$ intersects the first column.

\begin{example}
When $r$ and $n$ are coprime, the table for $S_{cochar}$ depends solely on these diagonal numbers, which we will later prove in Section \ref{Comparisons} (see Corollary \ref{cor:coprime}.) For instance, the table where $n = 10,$ $r = 3$ is shown in Figure \ref{diag-figure}, with diagonal numbers marked, and the value of every entry in this matrix is determined by its two diagonal numbers. Specifically, we have $S_{cochar}(10,3,c,d) = \done^{r-1}\dtwo = \done^2\dtwo$ for any $c,d$.

\begin{figure}[h]
    \centering
\begin{tikzpicture}

\small \matrix (m) [matrix of nodes,ampersand replacement=\&]
{
1000 \& 2 \& 8 \& 2 \& 8 \& 250 \& 8 \& 2 \& 8 \& 2\\
1 \& 100 \& 5 \& 4 \& 1 \& 4 \& 25 \& 20 \& 1 \& 4 \\
8 \& 2 \& 200 \& 2 \& 40 \& 2 \& 8 \& 50 \& 8 \& 10 \\
1 \& 20 \& 1 \& 100 \& 1 \& 4 \& 5 \& 4 \& 25 \& 4 \\
8 \& 2 \& 8 \& 10 \& 200 \& 2 \& 8 \& 2 \& 40 \& 50 \\
125 \& 4 \& 1 \& 4 \& 1 \& 500 \& 1 \& 4 \& 1 \& 4 \\
8 \& 50 \& 40 \& 2 \& 8 \& 2 \& 200 \& 10 \& 8 \& 2 \\
1 \& 4 \& 25 \& 4 \& 5 \& 4 \& 1 \& 100 \& 1 \& 20 \\
8 \& 10 \& 8 \& 50 \& 8 \& 2 \& 40 \& 2 \& 200 \& 2 \\
1 \& 4 \& 1 \& 20 \& 25 \& 4 \& 1 \& 4 \& 5 \& 100 \\
};

\draw[red]  (m-1-1.north west) -- (m-10-10.south east);
\draw[blue] (m-6-1.north west) -- (m-10-5.south east);
\draw[blue] (m-1-6.north west) -- (m-5-10.south east);
\draw[LimeGreen] (m-3-1.north west) -- (m-10-8.south east);
\draw[LimeGreen] (m-5-1.north west) -- (m-10-6.south east);
\draw[LimeGreen] (m-7-1.north west) -- (m-10-4.south east);
\draw[LimeGreen] (m-9-1.north west) -- (m-10-2.south east);
\draw[LimeGreen] (m-1-3.north west) -- (m-8-10.south east);
\draw[LimeGreen] (m-1-5.north west) -- (m-6-10.south east);
\draw[LimeGreen] (m-1-7.north west) -- (m-4-10.south east);
\draw[LimeGreen] (m-1-9.north west) -- (m-2-10.south east);

\draw[red] (m-10-1.south east) -- (m-1-6.north east);
\draw[red] (m-10-6.south east) -- (m-3-10.north east);
\draw[blue] (m-10-4.south west) -- (m-1-9.north west);
\draw[blue] (m-6-1.south west) -- (m-1-4.north west);
\draw[blue] (m-10-9.south west) -- (m-8-10.north east);
\draw[red] (m-1-1.south west) -- (m-1-1.north);
\draw[LimeGreen] (m-3-1.south west) -- (m-1-2.north east);
\draw[LimeGreen] (m-5-1.south west) -- (m-1-3.north east);
\draw[LimeGreen] (m-7-1.south west) -- (m-1-4.north east);
\draw[LimeGreen] (m-9-1.south west) -- (m-1-5.north east);
\draw[LimeGreen] (m-10-2.south east) -- (m-1-7.north east);
\draw[LimeGreen] (m-10-3.south east) -- (m-1-8.north east);
\draw[LimeGreen] (m-10-4.south east) -- (m-1-9.north east);
\draw[LimeGreen] (m-10-5.south east) -- (m-1-10.north east);
\draw[LimeGreen] (m-10-7.south east) -- (m-5-10.north east);
\draw[LimeGreen] (m-10-7.south east) -- (m-5-10.north east);
\draw[LimeGreen] (m-10-8.south east) -- (m-7-10.north east);
\draw[LimeGreen] (m-10-9.south east) -- (m-9-10.north east);

\node [xshift=-4.25cm, yshift=2.5cm](m-1-1){$\done=$};
\node [red, xshift=-3.75cm, yshift=2.5cm](m-1-1){$10$};
\node [xshift=-2.9cm, yshift=2.5cm](m-1-1){$1$};
\node [LimeGreen, xshift=-2.1cm, yshift=2.5cm](m-1-1){$2$};
\node [xshift=-1.25cm, yshift=2.5cm](m-1-1){$1$};
\node [LimeGreen, xshift=-0.65cm, yshift=2.5cm](m-1-1){$2$};
\node [blue, xshift=-0cm, yshift=2.5cm](m-1-1){$5$};
\node [LimeGreen, xshift=.75cm, yshift=2.5cm](m-1-1){$2$};
\node [xshift=1.5cm, yshift=2.5cm](m-1-1){$1$};
\node [LimeGreen, xshift=2.1cm, yshift=2.5cm](m-1-1){$2$};
\node [xshift=2.9cm, yshift=2.5cm](m-1-1){$1$};
\node [xshift=-4.7cm, yshift=1.65cm](m-1-1){$\dtwo=$};
\node [red, xshift=-4cm, yshift=1.65cm](m-1-1){$10$};
\node [xshift=-4cm, yshift=1.2cm](m-1-1){$1$};
\node [LimeGreen, xshift=-4cm, yshift=0.8cm](m-1-1){$2$};
\node [xshift=-4cm, yshift=.4cm](m-1-1){$1$};
\node [LimeGreen, xshift=-4cm, yshift=-0.1cm](m-1-1){$2$};
\node [blue, xshift=-4cm, yshift=-.5cm](m-1-1){$5$};
\node [LimeGreen, xshift=-4cm, yshift=-1cm](m-1-1){$2$};
\node [xshift=-4cm, yshift=-1.35cm](m-1-1){$1$};
\node [LimeGreen, xshift=-4cm, yshift=-1.75cm](m-1-1){$2$};
\node [xshift=-4cm, yshift=-2.1cm](m-1-1){$1$};

\draw[violet, line width=3pt] (m-5-9) circle (3mm);

\coordinate (P1) at (11.5,-1);

\node [violet, xshift=-5cm, yshift=2.1cm] at  (P1) {$\done=\textcolor{LimeGreen}{2}, \dtwo= \textcolor{red}{10}$};
\node [violet, xshift=-5cm, yshift=2.1cm] at  (11.5,-1.5) {$40 = 2^2 \cdot 10$};
\draw [violet,thick,->] (5,1) -- (3.15,0.4);
\end{tikzpicture}
  \caption{The table showing $S_{cochar}(c,d,3,10)$ for all $(c,d) \in \Z_{10} \times \Z_{10}$ with diagonals for diagonal numbers greater than 1 marked. Notice here that since $r=3$ and $n=10$ are coprime, every entry is equal to $\done^2\cdot \dtwo$. In contrast, see the example in Figure \ref{fig:diagexamples} for $r=3$ and $n=9$, where this is not true.}
  \label{diag-figure}
\end{figure}
\end{example}

In general, given a random $n$ and $r$, the value of $S_{cochar}$ will not depend nearly so simply on $\done$ and $\dtwo$, but they still play an important determining role. To find a closed formula for $S_{cochar}$, we must look to an inhomogenous generalization of the homogenous cocharacter equations with which we started.

\begin{definition}
Let $a \in \Z/n\Z$, and $\boldsymbol{x} \in (\Z/n\Z)^r$. Then the \emph{inhomogenous cocharacter equations} for $a\in \Z$ are defined by
    \begin{equation}
          B_{c,d} \cdot \boldsymbol{x}
    = a \cdot \boldsymbol{1}_r \pmod{n}.
    \label{inhom}
    \end{equation}

Let $S_{inhom}(c,d,r,n)$ be the number of total solutions to the inhomogenous cocharacter equations, ranging over all values of $a\in \Z$.
\end{definition}

In the next section, we will solve for $S_{cochar}(c,d,r,n)$ by characterizing the set of solutions to
the inhomogenous cocharacter equations using straightforward linear algebra techniques and identifying the proportion of solutions with $a \equiv 0\pmod{n}$. To do this, we will need to identify a precise formula for smallest nonzero value of $a$ for which \eqref{inhom} has a solution.

\begin{definition}
For a fixed $c,d,r,n$, let $A(c,d,r,n)$ be the smallest positive integer value for $a$ such that there is a solution to the inhomogenous cocharacter equations (\ref{inhom}). 
\end{definition}







\section{Proof of Main Theorem 1 Part 1}\label{SolvingCochar} 

For the entirety of this section, fix a set of parameters $c,d,r,n$. To find a formula for $S_{cochar}:= S_{cochar}(c,d,r,n)$, we begin by showing that the solutions to the inhomogenous cocharacter equations fall into equally sized equivalence classes defined by the values $a\in \Z$, and that
\begin{equation*}
S_{cochar}(c,d,r,n) = \frac{A(c,d,r,n)}{n}\cdot  S_{inhom}(c,d,r,n)
\end{equation*}

Characterizing the solutions to the inhomogenous cocharacter equations, we will then provide explicit expressions for $S_{inhom}(c,d,r,n)$ and $A(c,d,r,n)$.

\begin{lemma}
The equation $B_{c,d}\cdot \boldsymbol{x} \equiv a\cdot \boldsymbol{1}_r \pmod{n}$ has a solution if and only if $a$ is a multiple of $A=A(c,d,r,n)$. Thus, $A(c,d,r,n)$ divides $n$.
\label{lem: Bx = A}
\end{lemma}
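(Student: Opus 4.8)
The key observation is that the set of attainable values of $a$ — that is, $I := \{a \in \Z/n\Z : B_{c,d}\cdot\boldsymbol{x} \equiv a\cdot\boldsymbol{1}_r \pmod n \text{ for some } \boldsymbol{x}\}$ — is closed under addition, since if $\boldsymbol{x}_1,\boldsymbol{x}_2$ witness $a_1,a_2$ then $\boldsymbol{x}_1+\boldsymbol{x}_2$ witnesses $a_1+a_2$. Thus $I$ is a subgroup of $\Z/n\Z$, and it contains $0$ (take $\boldsymbol{x}=\boldsymbol{0}$). Every subgroup of $\Z/n\Z$ is cyclic, generated by some divisor of $n$; I would identify that generator with $A(c,d,r,n)$. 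So the plan is: (1) show $I$ is a subgroup of $\Z/n\Z$; (2) invoke the classification of subgroups of $\Z/n\Z$ to conclude $I = \langle A' \rangle$ for the unique $A' \mid n$ with $|I| = n/A'$; (3) check that this $A'$ coincides with the definition of $A(c,d,r,n)$ as the smallest positive $a$ admitting a solution.

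For step (3), one must be slightly careful about the interplay between "smallest positive integer $a$" (the literal definition) and "generator of the subgroup $I \le \Z/n\Z$". Since $0 \in I$ always and $I$ is a subgroup, the smallest positive representative of a nonzero element of $I$ is exactly the generator $A'$ dividing $n$ (if $I = \{0\}$, then no positive $a < n$ works but $a = n \equiv 0$ does, and $A = n$, which still divides $n$). From here, "$a$ admits a solution $\iff a \in I \iff A' \mid a$" is immediate from cyclicity: the multiples of $A'$ in $\Z$ are precisely the integers reducing into $\langle A'\rangle$. The final clause, $A \mid n$, then follows because a generator of a subgroup of $\Z/n\Z$ always divides $n$.

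I expect no serious obstacle here; this is essentially the structure theorem for cyclic groups applied to the image subgroup $I$. The only thing requiring a line of care is checking closure of $I$ under the group operation and under negation (the latter follows from $(n-1)$-fold addition, or directly from $\boldsymbol{x}\mapsto -\boldsymbol{x}$), and reconciling the "smallest positive integer" phrasing with the subgroup generator — a minor bookkeeping point about representatives mod $n$. Everything else is formal.
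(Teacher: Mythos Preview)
Your proof is correct and rests on the same underlying idea as the paper's, just packaged differently. The paper argues directly via the division algorithm: having fixed a solution $\boldsymbol{x}_A$ for $A$, if some $g$ not divisible by $A$ admitted a solution $\boldsymbol{x}_g$, then writing $jA < g < (j+1)A$ one sees that $\boldsymbol{x}_g - j\boldsymbol{x}_A$ solves for the value $g - jA \in (0,A)$, contradicting minimality of $A$; the clause $A \mid n$ is then obtained separately from the observation that $\boldsymbol{0}$ solves for $a = n \equiv 0$. Your subgroup framing is exactly the abstraction of this computation (the division algorithm being what proves subgroups of $\Z$ are cyclic), and it has the mild advantage that $A \mid n$ falls out of the structure theorem for free rather than needing a separate line. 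The paper's version is marginally more self-contained; yours is marginally more conceptual. Either is fine here.
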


\begin{proof}
By definition, a solution $\boldsymbol{x}_A$ to the equation $B_{c,d}\cdot \boldsymbol{x} \equiv A\cdot \boldsymbol{1}_r \pmod{n}$ exists. If $a=kA$ for some $k\in \Z$, then $k\boldsymbol{x_A}$ is a solution to $B_{c,d}\cdot \boldsymbol{x} \equiv a\cdot \boldsymbol{1}_r \pmod{n}.$ For the other direction, suppose there exists a positive integer $g$ and solution $\boldsymbol{x}_g\in (\Z/n\Z)^r$ to the equation $B_{c,d}\cdot \boldsymbol{x}_g \equiv g\cdot \boldsymbol{1}_r \pmod{n}$, but that $A$ does not divide $g$. 
Then $jA < g < (j+1) A$ for some positive integer $j$. Therefore,
\begin{align*}
    B_{c,d}\cdot (\boldsymbol{x}_g - j \boldsymbol{x}_A) &\equiv B_{c,d}\cdot  \boldsymbol{x}_g - j B_{c,d}\cdot \boldsymbol{x}_A\\ 
    &\equiv (g - jA)\cdot \boldsymbol{1}_r,
\end{align*}
which contradicts the minimality of $A$. Then, since $\boldsymbol{x} = \boldsymbol{0}_r$ is a solution to $B_{c,d}\cdot  \boldsymbol{x} \equiv n\cdot\boldsymbol{1}_r \equiv \boldsymbol{0}_r \pmod{n}$, the second statement follows.
\end{proof}

Splitting the solutions to \eqref{inhom} into equivalence classes based on $a$, we examine the number of solutions in each class and characterize them more concretely.

\begin{lemma}
For $k\in \{1,...,\frac{n}{A}\}$, let $W_k$ be the set of solutions to $B_{c,d} \cdot \boldsymbol{x} \equiv (kA)\cdot \boldsymbol{1}_r \pmod{n}$. Then $|W_k| = |W_1|$ for all such $k$.
\end{lemma}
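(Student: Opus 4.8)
The plan is to exhibit an explicit bijection between $W_1$ and each $W_k$. Fix a solution $\boldsymbol{x}_A$ to $B_{c,d}\cdot\boldsymbol{x}\equiv A\cdot\boldsymbol{1}_r\pmod{n}$, which exists by definition of $A$, and let $\boldsymbol{x}_{kA}:=k\boldsymbol{x}_A$, which is a solution to $B_{c,d}\cdot\boldsymbol{x}\equiv(kA)\cdot\boldsymbol{1}_r\pmod{n}$ as noted in the proof of Lemma~\ref{lem: Bx = A}. The natural candidate map is translation: define $\varphi_k: W_1 \to W_k$ by $\varphi_k(\boldsymbol{v}) = \boldsymbol{v} + (k-1)\boldsymbol{x}_A$.

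First I would check this is well-defined: if $\boldsymbol{v}\in W_1$, then $B_{c,d}\cdot\varphi_k(\boldsymbol{v}) \equiv B_{c,d}\cdot\boldsymbol{v} + (k-1)B_{c,d}\cdot\boldsymbol{x}_A \equiv A\cdot\boldsymbol{1}_r + (k-1)A\cdot\boldsymbol{1}_r \equiv (kA)\cdot\boldsymbol{1}_r \pmod{n}$, so $\varphi_k(\boldsymbol{v})\in W_k$. Next, $\varphi_k$ is injective since it is a translation in the abelian group $(\Z/n\Z)^r$, hence cancellable. Finally, $\varphi_k$ is surjective: given $\boldsymbol{w}\in W_k$, set $\boldsymbol{v} = \boldsymbol{w} - (k-1)\boldsymbol{x}_A$; then $B_{c,d}\cdot\boldsymbol{v} \equiv (kA)\cdot\boldsymbol{1}_r - (k-1)A\cdot\boldsymbol{1}_r \equiv A\cdot\boldsymbol{1}_r\pmod{n}$, so $\boldsymbol{v}\in W_1$ and $\varphi_k(\boldsymbol{v}) = \boldsymbol{w}$. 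Therefore $|W_k| = |W_1|$.

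There is essentially no obstacle here; the statement is a standard coset-translation argument, and the only thing to be careful about is that all arithmetic takes place modulo $n$ and that $\boldsymbol{x}_A$ is a genuine fixed choice (different choices give different bijections but the same conclusion). One could equivalently phrase this by noting that $W_1$ is a coset of the solution space $W_0 = \Lambda_{fin}$ of the homogeneous system and $W_k$ is the coset $W_1 + (k-1)\boldsymbol{x}_A$, so all the $W_k$ (including $W_0$) have cardinality $|\Lambda_{fin}| = |W_0|$; in particular this also recovers $S_{inhom} = \frac{n}{A}|W_1|$ and, combined with Lemma~\ref{lem: Bx = A}, will feed into the displayed identity $S_{cochar} = \frac{A}{n}S_{inhom}$.
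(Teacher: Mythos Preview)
Your proof is correct and takes essentially the same approach as the paper: both fix an element of $W_1$ and use the translation map $\boldsymbol{y}\mapsto\boldsymbol{y}+(k-1)\boldsymbol{x}$ to give a bijection $W_1\to W_k$. Your write-up is simply more explicit in verifying well-definedness, injectivity, and surjectivity than the paper's one-line version.
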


\begin{proof}
Consider any $\boldsymbol{x}\in W_1$. The function $\phi_{\boldsymbol{x}}: W_1\rightarrow W_k$ defined by $\boldsymbol{y} \mapsto \boldsymbol{y}+(k-1)\cdot\boldsymbol{x}$ provides a bijection between $W_1$ and $W_k$.
\end{proof}

\begin{lemma}\label{lemma:c-d}
Let $\boldsymbol{x} = (x_1, x_2, \dots, x_r)^T$. Then $\boldsymbol{x}$ solves the inhomogenous cocharacter equations if and only if $cx_1 + dx_j \equiv dx_1 + cx_j \pmod{n}$ for every $2 \leq j \leq r$.
\end{lemma}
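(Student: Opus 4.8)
The plan is to unwind both sides into the single scalar congruence $(c-d)(x_1-x_j)\equiv 0\pmod n$ and observe that this is exactly what forces all $r$ coordinates of $B_{c,d}\cdot\boldsymbol{x}$ to agree modulo $n$. First I would write out the $i$-th coordinate of $B_{c,d}\cdot\boldsymbol{x}$ explicitly: reading off the matrix in \eqref{eqn:Bcdrn}, it equals $d(x_1+\cdots+x_r)+(c-d)x_i$. Hence the $i$-th coordinate differs from the $j$-th by $(c-d)(x_i-x_j)$, and a one-line rearrangement shows that the congruence $(c-d)(x_1-x_j)\equiv 0\pmod n$ is literally the same as $cx_1+dx_j\equiv dx_1+cx_j\pmod n$.

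For the forward direction, suppose $\boldsymbol{x}$ solves $B_{c,d}\cdot\boldsymbol{x}\equiv a\cdot\boldsymbol{1}_r\pmod n$ for some $a\in\Z$. Subtracting the $j$-th coordinate equation from the first cancels both the shared term $d(x_1+\cdots+x_r)$ and the shared right-hand side $a$, leaving $(c-d)(x_1-x_j)\equiv 0\pmod n$, i.e.\ $cx_1+dx_j\equiv dx_1+cx_j\pmod n$, for every $2\le j\le r$.

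For the converse, assume $cx_1+dx_j\equiv dx_1+cx_j\pmod n$, equivalently $(c-d)x_1\equiv (c-d)x_j\pmod n$, for all $2\le j\le r$. Set $a:=d(x_1+\cdots+x_r)+(c-d)x_1\bmod n$, which is precisely the first coordinate of $B_{c,d}\cdot\boldsymbol{x}$. Then for each $j$ the $j$-th coordinate is $d(x_1+\cdots+x_r)+(c-d)x_j\equiv d(x_1+\cdots+x_r)+(c-d)x_1=a\pmod n$, so $B_{c,d}\cdot\boldsymbol{x}\equiv a\cdot\boldsymbol{1}_r\pmod n$ and $\boldsymbol{x}$ solves the inhomogeneous cocharacter equations for this value of $a$.

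The argument is essentially bookkeeping, so I do not anticipate a genuine obstacle; the one point that needs care is the quantifier on $a$. Since "$\boldsymbol{x}$ solves the inhomogeneous cocharacter equations" means there exists some $a\in\Z$ with $B_{c,d}\cdot\boldsymbol{x}\equiv a\cdot\boldsymbol{1}_r\pmod n$, in the converse direction one must exhibit the correct value of $a$, namely the common coordinate value computed above, rather than treat $a$ as given in advance.
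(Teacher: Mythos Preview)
Your proof is correct and follows essentially the same approach as the paper: both directions amount to subtracting the $j$-th row from the first to isolate $(c-d)(x_1-x_j)\equiv 0\pmod n$, and for the converse defining $a$ to be the common value of the first coordinate. Your rewriting of the $i$-th coordinate as $d(x_1+\cdots+x_r)+(c-d)x_i$ is a slightly cleaner bookkeeping device than the paper's, but the underlying argument is identical.
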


\begin{proof}

Let $2 \leq j \leq r$. For a solution $\boldsymbol{x}$, the first row of the equation $B_{c,d}\cdot \boldsymbol{x} \equiv a\cdot \boldsymbol{1}_r \pmod{n}$ tells us that
\begin{equation*}
    c x_1 + d x_j + \sum\limits_{\substack{2 \leq k \leq r\\ k \neq j}} d x_k \equiv a \pmod{n}
\end{equation*}
Subtracting the $j$-th row 
\begin{equation*}
    d x_1 + c x_j + \sum\limits_{\substack{2 \leq k \leq r\\ k \neq j}} d x_k \equiv a \pmod{n},
\end{equation*}
from the first, we obtain
\begin{eqnarray*}
    c x_1 + d x_j &\equiv& d x_1 + c x_j \pmod{n}.
\end{eqnarray*}
For the other direction, suppose $\boldsymbol{x}$ satisfies $cx_1 + dx_j \equiv dx_1 + cx_j \pmod{n}$ for all $j\in \{2,...,r\}$. Then, $\boldsymbol{x}$ satisfies the inhomogeneous cocharacter equations for the value $a \equiv c x_1 + d x_j + \sum\limits_{\substack{2 \leq k \leq r\\ k \neq j}} d x_k \pmod{n}$.
\end{proof}


\begin{prop}
\label{prop: solutions to inhom}
A vector $\boldsymbol{x} = (x_1, x_2, \dots, x_r)^T$ solves the inhomogenous cocharacter equations 
if and only if for each $j\in \{2,...,r\}$ we have $x_j = x_1 + v_j \cfrac{n}{\done}$ for some integer $v_j$ such that $1 \leq v_j \leq \done$.
\end{prop}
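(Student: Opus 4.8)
The plan is to reduce the statement to Lemma \ref{lemma:c-d} and then invoke a single elementary divisibility fact. First I would recall that, by Lemma \ref{lemma:c-d}, a vector $\boldsymbol{x} = (x_1, \dots, x_r)^T$ solves the inhomogeneous cocharacter equations if and only if $cx_1 + dx_j \equiv dx_1 + cx_j \pmod{n}$ for every $j \in \{2, \dots, r\}$. Rearranging this congruence yields $(c-d)(x_j - x_1) \equiv 0 \pmod{n}$, so the whole system decouples into $r-1$ independent one-variable congruences, each constraining only the difference $x_j - x_1$.

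Next I would apply the standard fact that for integers $m$ and $n$, one has $mt \equiv 0 \pmod{n}$ if and only if $\tfrac{n}{\gcd(m,n)}$ divides $t$. Taking $m = c-d$, so that $\gcd(m,n) = \done$, this says exactly that $x_j - x_1$ must be a multiple of $\tfrac{n}{\done}$, i.e.\ $x_j = x_1 + v_j \tfrac{n}{\done}$ for some integer $v_j$. For the converse direction, any $\boldsymbol{x}$ of this form satisfies $(c-d)(x_j - x_1) = v_j (c-d) \tfrac{n}{\done}$, which is a multiple of $n$ because $\done \mid c-d$; hence such an $\boldsymbol{x}$ solves the decoupled congruences and therefore, again by Lemma \ref{lemma:c-d}, the inhomogeneous cocharacter equations.

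Finally I would pin down the range $1 \leq v_j \leq \done$: since the coordinates live in $\Z/n\Z$ and $\done \cdot \tfrac{n}{\done} = n$, replacing $v_j$ by $v_j + \done$ does not change $x_j$ modulo $n$, so each solution is represented by exactly one value of $v_j$ in any complete residue system modulo $\done$, and $\{1, \dots, \done\}$ is such a system. I do not anticipate a genuine obstacle here; the only points requiring care are (i) stating the equivalence at the level of vectors in $(\Z/n\Z)^r$ rather than in $\Z^r$, and (ii) checking that distinct tuples $(v_2, \dots, v_r) \in \{1, \dots, \done\}^{r-1}$ together with a choice of $x_1 \in \Z/n\Z$ produce distinct solutions — this last bookkeeping point is what makes the proposition usable for the solution count in the next section.
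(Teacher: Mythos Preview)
Your proposal is correct and follows essentially the same route as the paper: reduce to Lemma~\ref{lemma:c-d}, rewrite the resulting congruence as $(c-d)(x_j-x_1)\equiv 0\pmod n$, and invoke the standard divisibility fact to conclude that $x_j-x_1$ must be a multiple of $n/\done$. Your added remarks on the converse and on normalizing $v_j$ into $\{1,\dots,\done\}$ are a bit more explicit than the paper's version but not substantively different.
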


\begin{proof}
By Lemma \ref{lemma:c-d}, it suffices to characterize the solutions $\boldsymbol{x}$ to the system of equations given by 
\begin{equation*}
    cx_1 + dx_j \equiv dx_1 + cx_j \pmod{n}
\end{equation*} 
for every $j\in\{2,...,r\}$, or equivalently,
\begin{equation}\label{cx1+dxj=dx1+cxj}
(c - d)(x_1 - x_j) \equiv 0 \pmod{n} .
\end{equation}
Recalling that $\done = \gcd(c-d, n)$, a vector $\boldsymbol{x}$ satisfies \eqref{cx1+dxj=dx1+cxj} exactly when $x_1 - x_j$ is a multiple of $\frac{n}{\done}$ for all $j\in\{2,...,r\}$. Thus, the solutions to the inhomogeneous cocharacter equations are precisely the vectors of the form
\begin{equation*}
    \boldsymbol{x} = x_1 \cdot \boldsymbol{1}_r
    + \frac{n}{\done} (0,v_2,v_3,...,v_r)^T
\end{equation*}
where $0 \leq x_1 < n$ and $v_j \in \Z$ such that $1 \leq v_j \leq \done$ for all $j\in \{2,...,r\}$.
\end{proof}

Now that we have precisely characterized the set of $\boldsymbol{x}$ which solve the inhomogeneous cocharacteristic equations, we can count the size of this set by ranging over all distinct choices of tuples $(x_1,v_2,...,v_r)$, which each yield a distinct solution $\boldsymbol{x}$.

\begin{cor}
The number of solutions to the inhomogenous cocharacter equations is 
\begin{equation*}
S_{inhom}(c,d,r,n) = n \done^{r-1}.
\end{equation*}
\end{cor}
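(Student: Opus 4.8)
Proof proposal for the corollary $S_{inhom}(c,d,r,n) = n\done^{r-1}$.

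The plan is to apply the bijective parametrization from Proposition~\ref{prop: solutions to inhom} directly. That proposition asserts that the solutions $\boldsymbol{x} = (x_1,\dots,x_r)^T$ to the inhomogenous cocharacter equations are exactly the vectors of the form $\boldsymbol{x} = x_1\cdot\boldsymbol{1}_r + \frac{n}{\done}(0,v_2,\dots,v_r)^T$, where $x_1$ ranges over $\{0,1,\dots,n-1\}$ and each $v_j$ ranges over $\{1,2,\dots,\done\}$ for $j \in \{2,\dots,r\}$. So the first step is to observe that this assignment $(x_1,v_2,\dots,v_r) \mapsto \boldsymbol{x}$ is a well-defined map into $(\Z/n\Z)^r$, and then argue it is a bijection onto the solution set.

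The key remaining point is \textbf{injectivity} of this parametrization: distinct tuples $(x_1,v_2,\dots,v_r)$ must yield distinct solutions $\boldsymbol{x} \pmod n$. Given $\boldsymbol{x}$, its first coordinate recovers $x_1 \pmod n$ uniquely (since $x_1 \in \{0,\dots,n-1\}$), and then for each $j\ge 2$ the quantity $x_j - x_1 \equiv v_j \frac{n}{\done} \pmod n$ determines $v_j$ modulo $\done$; since $v_j$ is restricted to the $\done$ consecutive values $\{1,\dots,\done\}$, it is pinned down exactly. Surjectivity is Proposition~\ref{prop: solutions to inhom} itself. Hence the number of solutions equals the number of admissible tuples, which is $n$ choices for $x_1$ times $\done$ choices for each of the $r-1$ values $v_2,\dots,v_r$, giving $n\done^{r-1}$.

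I do not anticipate a genuine obstacle here — the corollary is essentially a counting bookkeeping step once Proposition~\ref{prop: solutions to inhom} is in hand. The one subtlety worth a sentence in the writeup is checking that the representatives $\{1,\dots,\done\}$ for $v_j$ (rather than $\{0,\dots,\done-1\}$) still form a complete residue system mod $\done$, so the count is unaffected by this cosmetic choice; this is immediate since any $\done$ consecutive integers form such a system.
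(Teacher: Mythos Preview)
Your argument is correct and matches the paper's approach exactly: the paper treats this corollary as an immediate count, noting that distinct tuples $(x_1,v_2,\dots,v_r)$ from Proposition~\ref{prop: solutions to inhom} yield distinct solutions $\boldsymbol{x}$, so the total is $n\cdot\done^{r-1}$. Your explicit verification of injectivity (recovering $x_1$ from the first coordinate and each $v_j$ from $x_j-x_1$) simply fills in the one detail the paper leaves implicit.
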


We are now prepared to identify a precise formula for $A$ in terms of $n, r, c,$ and $d$.

\begin{prop}
The minimum positive integer $A$ such that the inhomogenous cocharacter equations have a solution is $A = \gcd\left(\dtwo, \frac{dn}{\done}\right)$, recalling that $\dtwo = \gcd(c+(r-1)d,n)$.
\end{prop}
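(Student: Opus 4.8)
The plan is to combine the explicit parametrization of solutions in Proposition~\ref{prop: solutions to inhom} with the observation that the set of attainable values $a$ (mod $n$) is a subgroup of $\Z/n\Z$, so that $A$ is just its least positive element.

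\emph{Reading off $a$ from a solution.} Taking the first row of $B_{c,d}\cdot\boldsymbol{x}\equiv a\cdot\boldsymbol{1}_r\pmod n$ gives $a\equiv cx_1+d\sum_{j=2}^r x_j\pmod n$. Substituting $x_j=x_1+v_j\frac{n}{\done}$ from Proposition~\ref{prop: solutions to inhom} and collecting terms gives
\[
a\equiv \bigl(c+(r-1)d\bigr)x_1+\frac{dn}{\done}\sum_{j=2}^r v_j\pmod n .
\]
As $\boldsymbol{x}$ ranges over all solutions, $x_1$ ranges over $\Z/n\Z$ and (for $r\ge 2$) the sum $\sum_{j=2}^r v_j$ ranges over a complete set of residues mod $\done$, so the second term ranges over the cyclic subgroup $\langle \frac{dn}{\done}\rangle\le\Z/n\Z$. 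Hence the attainable values of $a$ form exactly the subgroup $\langle\, c+(r-1)d,\ \frac{dn}{\done}\,\rangle$ of $\Z/n\Z$.

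\emph{Extracting the gcd.} The subgroup of $\Z/n\Z$ generated by integers $g_1,g_2$ equals the one generated by $\gcd(g_1,g_2,n)$, so its least positive element is $\gcd(g_1,g_2,n)$. Thus $A=\gcd\bigl(c+(r-1)d,\ \frac{dn}{\done},\ n\bigr)$, and associativity of the gcd rewrites this as $\gcd\bigl(\gcd(c+(r-1)d,n),\ \frac{dn}{\done}\bigr)=\gcd\bigl(\dtwo,\frac{dn}{\done}\bigr)$, since $\dtwo=\gcd(c+(r-1)d,n)$. This is the claimed formula, and it is consistent with Lemma~\ref{lem: Bx = A} (that $A\mid n$), since $\dtwo\mid n$.

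I expect the main obstacle to be the bookkeeping in the first step: confirming that the coefficient of $x_1$ is exactly $c+(r-1)d$, and, more importantly, that as the $v_j$ run over $\{1,\dots,\done\}$ their sum really hits every residue mod $\done$, so that the attainable values of $a$ form the full subgroup rather than a proper subset. The degenerate case $r=1$ (where no $v_j$ appear) should be handled separately, checking directly that $\gcd(c,n)=\gcd(\dtwo,\frac{dn}{\done})$.
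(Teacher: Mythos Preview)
Your argument is correct and essentially identical to the paper's: both substitute the parametrization from Proposition~\ref{prop: solutions to inhom} into the first row, obtain $a\equiv(c+(r-1)d)x_1+\frac{dn}{\done}\sum_{j\ge 2}v_j\pmod n$, and conclude $A=\gcd(c+(r-1)d,\frac{dn}{\done},n)=\gcd(\dtwo,\frac{dn}{\done})$. Your framing in terms of the subgroup of attainable $a$-values and your explicit check that $\sum v_j$ covers all residues mod $\done$ (so the second term sweeps out all of $\langle\frac{dn}{\done}\rangle$) is a touch more careful than the paper, which simply asserts that $x_1$ and $\sum v_j$ are ``arbitrary constants''; your flagged $r=1$ edge case is likewise outside the paper's implicit scope.
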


\begin{proof} Substituting Proposition \ref{prop: solutions to inhom} into \eqref{inhom}, we see that the left-hand side is
\begin{align*}
   \begin{pmatrix}
    c & d & d & \dots & d\\
    d & c & d & \dots & d\\
    d & d & c & \dots & d\\
    \vdots & \vdots & & \ddots & \vdots\\
    d & d & d & \dots & c
    \end{pmatrix}
    \left( x_1 \begin{pmatrix}
    1 \\ 1 \\ 1 \\ \vdots \\ 1
    \end{pmatrix}
    + \frac{n}{\done} \begin{pmatrix}
    0 \\ v_2 \\ v_3 \\ \vdots \\ v_r
    \end{pmatrix}
    \right)
    &\equiv x_1 (c + (r-1)d)\begin{pmatrix}
    1 \\ 1 \\ 1 \\ \vdots \\ 1
    \end{pmatrix} + \frac{n}{\done}
    \begin{pmatrix}
    d v_2 + d v_3 + \dots + d v_r \\ c v_2 + d v_3 + \dots + d v_r \\ d v_2 + c v_3 + \dots + d v_r \\ \vdots \\ d v_2 + d v_3 + \dots + d v_{r-1} + c v_r
    \end{pmatrix}.
\end{align*}
To have a solution, every row of this expression must must equal a constant $A$. Looking at the first row,
\begin{align*}
    A &\equiv x_1 (c + (r - 1) d) + \frac{dn}{\done}(v_2 + v_3 + \dots + v_r) \pmod{n}.
\end{align*}
From the proof of Proposition \ref{prop: solutions to inhom}, $x_1$ and $v_2 + v_3 + \dots + v_r$ are both arbitrary constants. Thus, the minimum value $A$ can have is $\gcd\left(c + (r - 1) d, \frac{dn}{\done}, n\right) = \gcd\left(\dtwo,\frac{dn}{\done}\right)$.
\end{proof}


 Note, since $\done$ divides $c-d$, we can equivalently write $A$ as $A = \gcd\left(\dtwo, \frac{cn}{\done}\right) = \gcd\left(\dtwo, \frac{dn}{\done}\right) = \gcd\left(\dtwo,\frac{n}{\done}\gcd(c,d,n)\right)$. Thus, we arrive at a closed form for $S_{cochar}$ in terms of $c,d,r,n$.

\begin{thm}
The number of solutions to the cocharacter equations is
\begin{align*}
S_{cochar}(c,d,r,n) = \done^{r-1} \gcd\left(\dtwo, \frac{dn}{\done}\right).
\end{align*}
\label{thm: mainthm}
\end{thm}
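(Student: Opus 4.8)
The plan is to assemble the theorem directly from the three results that immediately precede it in this section. The identity
\[
S_{cochar}(c,d,r,n) = \frac{A(c,d,r,n)}{n}\cdot S_{inhom}(c,d,r,n)
\]
was announced at the start of the section; I would first verify it carefully, since it is the structural backbone of the argument. The content is that the solution set of the inhomogeneous system partitions, according to the value of $a$ appearing on the right-hand side, into the classes $W_k$ for $k \in \{1,\dots,n/A\}$ together with the class $W_0$ of solutions to the homogeneous system (which is exactly $\Lambda_{fin}$, of size $S_{cochar}$). By Lemma~\ref{lem: Bx = A} these are all the classes that occur, and by the bijection $\phi_{\boldsymbol{x}}$ of the lemma following it, every $W_k$ has the same cardinality as $W_1$; moreover $W_0$ also has this cardinality, via the analogous translation map $\boldsymbol{y}\mapsto \boldsymbol{y} - \boldsymbol{x}$ for a fixed $\boldsymbol{x}\in W_1$ (or simply because $W_0 = W_{n/A}$ under the convention $k=n/A$ gives $a = n \equiv 0$). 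Hence $S_{inhom} = \frac{n}{A}\cdot S_{cochar}$, which rearranges to the displayed identity.

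Next I would substitute the two explicit formulas already proven: the Corollary gives $S_{inhom}(c,d,r,n) = n\,\done^{\,r-1}$, and the Proposition gives $A(c,d,r,n) = \gcd\!\left(\dtwo, \frac{dn}{\done}\right)$. Plugging both into the identity yields
\[
S_{cochar}(c,d,r,n) = \frac{\gcd\!\left(\dtwo, \frac{dn}{\done}\right)}{n}\cdot n\,\done^{\,r-1} = \done^{\,r-1}\gcd\!\left(\dtwo, \frac{dn}{\done}\right),
\]
which is exactly the claimed formula.

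The only genuine obstacle is the bookkeeping in the first step: one must be sure that the translation maps $\phi_{\boldsymbol{x}}$ are well-defined bijections \emph{including} the class $a\equiv 0$, and that every residue class of $a$ modulo $n$ that can occur is a multiple of $A$ with exactly $n/A$ such classes in $\{0,1,\dots,n-1\}$ — this is precisely what Lemma~\ref{lem: Bx = A} provides. A subtlety worth stating explicitly is that $A \mid n$ (also from that lemma), so $n/A$ is an integer and the count $S_{inhom} = (n/A)\cdot S_{cochar}$ makes sense; without this the whole scheme would not close up. Once these points are nailed down, the remainder is a one-line substitution, so I would keep the proof short: establish the partition-into-equal-classes identity, cite the Corollary and Proposition for the two inputs, and combine.
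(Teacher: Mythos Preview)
Your proposal is correct and matches the paper's approach exactly: the section opens by announcing the identity $S_{cochar} = \tfrac{A}{n}\cdot S_{inhom}$, proves the equal-size partition via Lemma~\ref{lem: Bx = A} and the $|W_k|=|W_1|$ lemma, then plugs in $S_{inhom}=n\done^{r-1}$ from the Corollary and $A=\gcd(\dtwo,\tfrac{dn}{\done})$ from the Proposition, leaving Theorem~\ref{thm: mainthm} as the immediate one-line combination. Your care in noting that $W_0=W_{n/A}$ (so the homogeneous class is among the equal-size classes) and that $A\mid n$ is exactly the bookkeeping the paper's lemmas are handling.
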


\section{Coroot Diophantine Equations}\label{Coroot}

Inspired by the constants $c+(r-1)d$ and $c-d$ showing up in the cocharacter equations, we define a second system of related equations more closely tied to the root structure of $GL_r(F)$.

\begin{definition}
The \emph{coroot equations} are the system of $r$ equations:
\begin{align*}
    (c-d)(x_i - x_r)  &\equiv 0 \pmod{n} \hspace{0.5cm} \text{for all $i\in\{1,...,r-1\}$,} \label{d2-coroot}\\
    (c+(r-1)d)(x_1 + \cdots + x_r) & \equiv 0 \pmod{n}. 
\end{align*}
\end{definition}

We call these the coroot equations because the $i$-th equation arises from evaluating $\boldsymbol{x}\in Y$ against the coroot $\varepsilon_i^\vee - \varepsilon_r^\vee$ in the bilinear form $B_{c,d}$ for $i\in\{1,...,r-1\}$. We could similarly evaluate against the simple coroots $\varepsilon_i^\vee - \varepsilon_{i+1}^\vee$, but this formulation will be more useful for our purposes. 

In some cases, the coroot and cocharacter equations are equivalent, but in other cases they are not: counting the solutions to the coroot equations and examining this connection will give us an alternate formula for $S_{cochar}$. 

\begin{remark}
This system also illuminates the difference between metaplectic covers of $SL_r(F)$ and $GL_r(F)$. For any cocharacter $\boldsymbol{x}$ for $SL_r(F)$, the last coroot equation is vacuously true, since $x_1 + \cdots +x_r \equiv 0 \pmod{n}$ is necessary for the resulting matrices $\boldsymbol{x}(a)$ to have determinant one for any $a\in F$. In this case, the cocharacter and coroot equations are equivalent, and they both give $|\Lambda_{fin} \cap SL_r(F)| = \done^{r-1}$. 
\end{remark}

\begin{thm}
The number of solutions to the coroot equations is
\begin{align*}
S_{coroot}(c,d,r,n) = \done^{r-1} \dtwo \gcd\left(\frac{n}{\done}, \frac{n}{\dtwo}, r\right).
\end{align*}
\label{thrm:coroot solutions}
\end{thm}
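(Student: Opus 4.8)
The approach is to count solutions $\boldsymbol{x} = (x_1,\dots,x_r)^T \in (\Z/n\Z)^r$ to the coroot equations by first isolating the ``$\done$-part'' and then layering on the single remaining linear constraint. The first $r-1$ equations $(c-d)(x_i - x_r) \equiv 0 \pmod n$ say exactly, as in the proof of Proposition \ref{prop: solutions to inhom}, that each $x_i - x_r$ is a multiple of $n/\done$; equivalently $x_i \equiv x_r \pmod{n/\done}$ for $i \in \{1,\dots,r-1\}$. So I would reparametrize: write $x_i = x_r + (n/\done) v_i$ for $i \in \{1,\dots,r-1\}$, where $x_r$ ranges over $\Z/n\Z$ and each $v_i$ ranges over $\Z/\done\Z$; this gives a bijection onto the solution set of the first $r-1$ equations, of size $n \cdot \done^{r-1}$. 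The only thing left is to determine what fraction of these also satisfy the last equation $(c+(r-1)d)(x_1 + \cdots + x_r) \equiv 0 \pmod n$.

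Substituting the reparametrization, $x_1 + \cdots + x_r = r x_r + (n/\done)(v_1 + \cdots + v_{r-1})$, so the last equation becomes
\begin{equation*}
(c + (r-1)d)\left( r x_r + \tfrac{n}{\done}\,V \right) \equiv 0 \pmod n, \qquad V := v_1 + \cdots + v_{r-1}.
\end{equation*}
Now I would count pairs $(x_r, V)$ with $x_r \in \Z/n\Z$ arbitrary and $V$ weighted by the number of tuples $(v_1,\dots,v_{r-1}) \in (\Z/\done\Z)^{r-1}$ summing to a given residue of $V$ — but care is needed because $V$ appears only through $(n/\done)V \pmod n$, i.e. only $V \bmod \done$ matters, and the number of $(v_i)$-tuples with $v_1 + \cdots + v_{r-1} \equiv V \pmod{\done}$ is exactly $\done^{r-2}$ for \emph{each} residue $V \bmod \done$ (when $r \geq 2$; the $r=1$ case is handled separately). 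Thus the count reduces to $\done^{r-2}$ times the number of pairs $(x_r, V) \in \Z/n\Z \times \Z/\done\Z$ satisfying $(c+(r-1)d) r x_r + (c+(r-1)d)(n/\done) V \equiv 0 \pmod n$.

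The remaining step is a two-variable linear congruence count. Writing $\dtwo = \gcd(c+(r-1)d, n)$, the map $x_r \mapsto (c+(r-1)d) r x_r \bmod n$ has image the cyclic group of order $n / \gcd((c+(r-1)d)r, n)$, and similarly for the $V$-term; the number of solutions is $n\done / \big|\langle (c+(r-1)d)r,\ (c+(r-1)d)(n/\done)\rangle \text{ in } \Z/n\Z\big|$. Simplifying $\gcd\big((c+(r-1)d)r,\ (c+(r-1)d)(n/\done),\ n\big)$ — pulling out $\dtwo$ and using $\gcd(n/\done, n/\dtwo, \dots)$-type identities, together with the observation that $\done \mid n/\dtwo$-type divisibility facts following from $\done, \dtwo \mid n$ — should collapse the expression to $\dtwo^{-1} n \cdot \gcd(n/\done, n/\dtwo, r)^{-1}$ worth of solutions, and multiplying through by the $\done^{r-2}$ multiplicity and bookkeeping the powers of $\done$ yields $S_{coroot} = \done^{r-1}\dtwo \gcd(n/\done, n/\dtwo, r)$. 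The main obstacle is precisely this last gcd manipulation: organizing the nested gcd/lcm identities so that the factors of $\done$ and $\dtwo$ separate cleanly — in particular verifying that $\gcd\big(\dtwo r,\ \dtwo n/\done,\ n\big) = \dtwo \gcd(r,\ n/\done,\ n/\dtwo)$, which needs the fact that any common divisor of $c+(r-1)d$ and $n$ beyond $\dtwo$ doesn't arise — and making sure the $r=1$ and $\done = n$ edge cases are consistent with the stated formula.
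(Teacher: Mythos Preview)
Your approach is correct and runs parallel to, but is organized differently from, the paper's. The paper changes variables to $y_i = x_i - x_r$ and $z = x_1 + \cdots + x_r$, so that the coroot equations decouple into $(c-d)y_i \equiv 0$ and $(c+(r-1)d)z \equiv 0$, giving $\done^{r-1}\dtwo$ tuples $(y_1,\dots,y_{r-1},z)$; it then has to count, for each such tuple, how many $\boldsymbol{x}$ sit above it, which amounts to analyzing solvability of $rx_r \equiv z - \sum y_i \pmod n$. This forces a separate proposition computing the fraction $Fr_b$ of tuples for which $b = \gcd(r,n)$ divides $z - \sum y_i$, argued via a somewhat delicate factorization of $b$. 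Your parametrization by $(x_r, v_1, \dots, v_{r-1})$ instead front-loads the overcount ($n\done^{r-1}$ tuples satisfying only the first $r-1$ equations) and then imposes the last equation as a single linear congruence on $(x_r, V)$, so the whole thing collapses to the size of one kernel. This is more direct and sidesteps the $Fr_b$ detour entirely.

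The gcd identity you flag as the main obstacle is actually immediate once you see it. You need
\[
\gcd\big((c+(r-1)d)r,\ (c+(r-1)d)\tfrac{n}{\done},\ n\big) \;=\; \dtwo \,\gcd\!\left(r,\ \tfrac{n}{\done},\ \tfrac{n}{\dtwo}\right).
\]
Because the third argument is $n$, you may replace $c+(r-1)d$ by $\dtwo = \gcd(c+(r-1)d,n)$ in the first two arguments (check prime by prime, or use that $(c+(r-1)d)/\dtwo$ is coprime to $n/\dtwo$); this gives $\gcd(\dtwo r,\ \dtwo\, n/\done,\ n)$. Now $\dtwo \mid n$, so write $n = \dtwo \cdot (n/\dtwo)$ and factor $\dtwo$ out of all three arguments. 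Your parenthetical worry about ``common divisors of $c+(r-1)d$ and $n$ beyond $\dtwo$'' is misplaced: $\dtwo$ \emph{is} that gcd by definition, so there is nothing beyond it. The $r=1$ case (no $v_i$'s, so the $\done^{r-2}$ multiplicity argument does not apply) should indeed be checked by hand, but the formula $\dtwo \cdot \gcd(n/\done, n/\dtwo, 1) = \dtwo = \gcd(c,n)$ matches the direct count.
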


As in Section \ref{SolvingCochar}, we prove general properties about the solutions to the coroot equations. These descriptions will allow us to directly relate $S_{cochar}$ to $S_{coroot}$ in Section \ref{Comparisons}. 

\begin{proof}
We start with a change of variables. Consider the coroot system in variables $y_1,...,y_{r-1},z$ written as
\begin{align*}
    (c-d)y_i  &\equiv 0 \pmod{n} \hspace{0.5cm} \text{for all $i\in\{1,...,r-1\}$,}\\
    (c+(r-1)d)z & \equiv 0 \pmod{n}.
\end{align*}
In terms of these variables, there are $\done^{r-1} \cdot \dtwo$ tuples $(y_1,...,y_{r-1},z)$ that solve the coroot equations: $y_i$ are all multiples of $\frac{n}{\done}$ and $z$ is a multiple of $\frac{n}{\dtwo}$. Let $S_{Y,Z}$ be the set of such tuples.

We then classify $\boldsymbol{x}$ satisfying $y_i= x_i-x_r$ and $z= x_1 + \cdots + x_r$ such that $(y_1,..,y_{r-1},z)\in S_{Y,Z}$: that is, the set of $\boldsymbol{x}$ satisfying the original formulation of the coroot equations. Note that $x_i = y_i+x_r$, so rearranging the final coroot equation, we have
\begin{equation}
    rx_r \equiv z - (y_1 + \cdots + y_{r-1})\pmod{n}, \label{rx=a}
\end{equation}
and thus the number of solutions in terms of $\boldsymbol{x}$ versus in terms of $(y_1,...,y_{r-1},z)$ depends on whether $r$ is invertible mod $n$. Let $b = \gcd(n, r)$. Then $x_r$ has $b$ solutions when $z - (y_1+\cdots + y_{r-1})$ is a multiple of $b$ and no solutions otherwise. A straightforward calculation verifies that there is no overlap between the sets of $\boldsymbol{x}$ for distinct tuples $(y_1,..,y_{r-1},z)\in S_{Y,Z}.$

Let $Fr_b(\done,\dtwo, n)$ be the proportion of $(y_1, \dots, y_r, z)$ tuples that will yield a valid solution to the coroot equations. In other words,
   $$Fr_b := \frac{|\{(y_1, \cdots, y_r, z) \in S_{Y,Z}: z- y_1 - \cdots -y_r \text{ is a multiple of } b\}|}{|S_{Y,Z}|}. $$
Then $S_{coroot}(c,d,r,n) = \done^{r-1} \dtwo \cdot b \cdot Fr_b(\done, \dtwo,n),$ and it will suffice to develop a formula for $Fr_b(\done, \dtwo,n)$.
\end{proof}

\begin{remark}
\label{rmk: Fr_1}
When $n$ and $r$ are relatively prime, $r$ is invertible. Thus  $Fr_b$ evaluates to $1$ because any tuple we pick adds to a multiple of $b=1$, so in this case the two sets of variables give equivalent conditions and $S_{coroot} = |S_{Y,Z}|$.
\end{remark}

\begin{prop}
The function $Fr_b$ evaluates to
\begin{eqnarray*}
   Fr_b(\done,\dtwo,n) = \frac{1}{b}\cdot \gcd\left(\frac{n}{\done}, \frac{n}{\dtwo}, b\right).
\end{eqnarray*}
\end{prop}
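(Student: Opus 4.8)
The plan is to reduce the evaluation of $Fr_b$ to the first isomorphism theorem applied to one homomorphism into $\Z/b\Z$. The first step is a change of variables that parametrizes $S_{Y,Z}$ by a product of cyclic groups: since each $y_i$ is a multiple of $\frac{n}{\done}$ in $\Z/n\Z$, write $y_i = \frac{n}{\done} a_i$ with $a_i \in \Z/\done\Z$, and since $z$ is a multiple of $\frac{n}{\dtwo}$, write $z = \frac{n}{\dtwo} c$ with $c \in \Z/\dtwo\Z$; one checks this is a bijection $S_{Y,Z} \to (\Z/\done\Z)^{r-1} \times \Z/\dtwo\Z$. Under it, the condition that $z - y_1 - \cdots - y_{r-1}$ be a multiple of $b$ becomes
\[
\frac{n}{\dtwo}c - \frac{n}{\done}(a_1 + \cdots + a_{r-1}) \equiv 0 \pmod b ,
\]
and because $b \mid n$ this congruence depends only on $c$ and on $t := a_1 + \cdots + a_{r-1} \bmod \done$.

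Next I would use that the fibers of the sum map are all the same size: for each residue $t \in \Z/\done\Z$ there are exactly $\done^{r-2}$ tuples $(a_1, \dots, a_{r-1}) \in (\Z/\done\Z)^{r-1}$ with $a_1 + \cdots + a_{r-1} \equiv t \pmod \done$ (choose the first $r-2$ entries freely; the last is then forced). Consequently the number of tuples in $S_{Y,Z}$ satisfying the congruence equals $\done^{r-2} \cdot N$, where
\[
N = \bigl|\{(t,c) \in \Z/\done\Z \times \Z/\dtwo\Z : \tfrac{n}{\done}t \equiv \tfrac{n}{\dtwo}c \pmod b\}\bigr| .
\]
Dividing by $|S_{Y,Z}| = \done^{r-1}\dtwo$ gives $Fr_b = N/(\done\dtwo)$, so everything comes down to computing $N$.

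To finish, I would identify $N$ as the order of the kernel of the homomorphism $\psi : \Z/\done\Z \times \Z/\dtwo\Z \to \Z/b\Z$, $(t,c) \mapsto \frac{n}{\done}t - \frac{n}{\dtwo}c$, which is well-defined precisely because $b \mid n$. Its image is the subgroup of $\Z/b\Z$ generated by $\frac{n}{\done}$ and $\frac{n}{\dtwo}$, namely $\gcd\!\bigl(\tfrac{n}{\done}, \tfrac{n}{\dtwo}, b\bigr)\Z/b\Z$, which has order $b / \gcd\!\bigl(\tfrac{n}{\done}, \tfrac{n}{\dtwo}, b\bigr)$; hence $N = \done\dtwo\,\gcd\!\bigl(\tfrac{n}{\done}, \tfrac{n}{\dtwo}, b\bigr)/b$, and therefore
\[
Fr_b(\done,\dtwo,n) = \frac{N}{\done\dtwo} = \frac{1}{b}\gcd\!\left(\frac{n}{\done}, \frac{n}{\dtwo}, b\right) .
\]

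I expect the only friction to be bookkeeping rather than ideas: checking that the change of variables is a genuine bijection and that reducing modulo $b$ after reducing modulo $n$ is consistent (so that $\psi$ descends to the finite cyclic groups and the subgroup-generated-by-two-elements identity $\gcd(\gcd(x,b),\gcd(y,b)) = \gcd(x,y,b)$ applies), and dispatching the degenerate case $r = 1$, where there are no $y$-variables but $b = \gcd(1,n) = 1$ forces both sides to equal $1$. The conceptual heart is just recognizing the count as $|\ker\psi|$, after which the formula is immediate.
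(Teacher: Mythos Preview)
Your proof is correct, and it takes a genuinely different route from the paper's. The paper argues probabilistically: it shows that $y = \sum y_i$ is equidistributed among the multiples of $k_1 = \gcd(n/\done, b)$ in $\Z/b\Z$ and $z$ among the multiples of $k_2 = \gcd(n/\dtwo, b)$, then introduces a further factorization $b = m s_1 s_2 k_s$ (with $k_s = \gcd(k_1,k_2)$, $k_i = s_i k_s$, $m = \gcd(m_1,m_2)$) and counts by hand the proportion of pairs $(\alpha,\beta)$ with $\beta s_2 \equiv \alpha s_1 \pmod{m s_1 s_2}$. Your argument instead packages the entire count as $|\ker\psi|$ for the single homomorphism $\psi:\Z/\done\Z \times \Z/\dtwo\Z \to \Z/b\Z$ and reads off the answer from the first isomorphism theorem, bypassing all the auxiliary constants. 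What you gain is brevity and transparency: the only arithmetic fact you need is that the subgroup of $\Z/b\Z$ generated by two elements $x,y$ is $\gcd(x,y,b)\Z/b\Z$. What the paper's approach buys is a more explicit picture of how the factors of $b$ interact with $n/\done$ and $n/\dtwo$ (visualized in their Figure~\ref{VennDiagram}), which may be useful downstream, but for proving the proposition itself your argument is cleaner.
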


\begin{proof}
We proceed by carefully considering the overlaps of factors of $b$ with those of $\frac{n}{\done}, \frac{n}{\dtwo}.$ Let $k_1 = \gcd(\frac{n}{\done},b)$ and $m_1\in \Z$ such that $b = m_1k_1$. Similarly, let $k_2 = \gcd(\frac{n}{\dtwo},b)$ and $m_2\in \Z$ such that $b = m_2 k_2$.

Since $y_i$ is a multiple of $\frac{n}{\done}$, it is also a multiple of $k_1$: examining which multiples are possible modulo $b$, we see that $y_i \pmod{b}$ can be any of the $m_1$ multiples of $k_1$ in $\Z/b\Z$ with equal probability.
Similarly, considering the sum $y = \sum_{i=1}^{r-1}y_i$, we claim that the same is true for $y$. Let $1\leq g\leq m_1$ and suppose $y\equiv gk_1\pmod{b}$: if we pick any arbitrary $y_1, y_2, \dots, y_{r-2}$, we are left with
\begin{eqnarray*}
y_{r-1} \equiv gk_1 - \sum_{i=1}^{r-2}y_i \pmod{b}.
\end{eqnarray*}
The right-hand side of this equation defines some equivalence class $ \ell k_1 \pmod{b}$ from which we must choose $y_{r-1}$ to ensure that $y \equiv gk_1 \pmod{b}$. Exactly $\frac{1}{m_1}$ of the possible values of $y_{r-1}$ place us in the correct equivalence class for a given $g$. Thus, $y$ falls into the equivalence classes $k_1, 2k_1, \dots, m_1k_1$ with equal probability. 

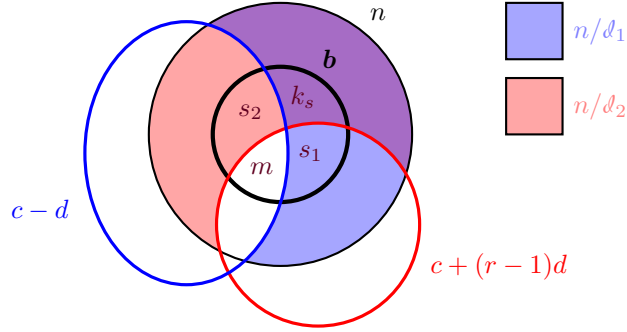
\begin{figure}[h]
    \centering
    \begin{tikzpicture}
    \scope
        \clip (-2,-2) rectangle (2,2)
        (0.5,-1.2) circle (1.35);
        \fill[fill=red, fill opacity=0.35] (0,0) circle (1.75);
    \endscope
    \scope
        \clip (-2,-2) rectangle (2,2)
        (-1.25,-0.25) ellipse (1.35 and 1.75);
        \fill[fill=blue, fill opacity=0.35] (0,0) circle (1.75);
    \endscope
    \draw[color=black, thick](0,0) circle (1.75);
    \draw[color=black, ultra thick](0,0) circle (0.9);
    \draw[color=red, very thick](0.5,-1.2) circle (1.35);
    \draw[color=blue, very thick](-1.25,-0.25) ellipse (1.35 and 1.75);
    \filldraw[thick, fill=blue, fill opacity=0.35] (3,1) rectangle (3.75,1.75);
    \filldraw[thick, fill=red, fill opacity=0.35] (3,0) rectangle (3.75,.75);
    \node [purple!50!black] at (0.3,0.5){$k_s$};
    \node [purple!50!black] at (0.4,-0.25){$s_1$};
    \node [purple!50!black] at (-0.4,0.3){$s_2$};
    \node [purple!50!black] at (-0.25,-0.45){$m$};
    \node [blue] at (-3.2,-1){$c-d$};
    \node [red] at (2.9,-1.75){$c+(r-1)d$};
    \node [thick, black] at (0.65,1){$\boldsymbol{b}$};
    \node [black] at (1.3,1.6){$n$};
    \node [blue, opacity=0.5] at (4.25,1.375){$n/\done$};
    \node [red, opacity=0.5] at (4.25,.375){$n/\dtwo$};
    \end{tikzpicture}
    \caption{A visualization of our factorization of $b = \gcd(r,n)$, where the overlap of any circle or shaded region with the circle for $b$ contains a factorization for their greatest common divisor. Note that the purple region is the overlap of the red and blue regions.}
    \label{VennDiagram}
\end{figure}

Likewise, $z \pmod{b}$ can be any of the $m_2$ multiples of $k_2$ modulo $b$ with equal probability. To interface between $y$ and $z$, we must factor further: let $k_s = \gcd(k_1,k_2)$ so that $k_1 = s_1 k_s$ and $k_2 = s_2 k_s$, and $\gcd(s_1,s_2)=1$. Letting $m = \gcd(m_1, m_2)$, factor $b$ completely as $b = m s_1 s_2 k_s$. The reader may find it helpful to refer to Figure \ref{VennDiagram}, which provides a visualization of how this factorization relates $b, \frac{n}{\done},$ and $\frac{n}{\dtwo}$.

We now identify the proportion of $y$- and $z$-values that satisfy $z - y \equiv 0 \pmod{b}$. Since $y,z,b$ all contain a factor of $k_s,$ let $y = \alpha k_1 = \alpha s_1 k_s$ and $z = \beta k_2 = \beta s_2 k_s$ for some $\alpha,\beta\in \Z$. Then, equivalently, we seek the proportion of $(\alpha,\beta)$ pairs such that
\begin{align*}
   \beta s_2 &\equiv \alpha s_1 \pmod{m s_1 s_2}.
\end{align*}
Since $s_1,s_2$ are coprime, we must have $\alpha = a s_2$ for some $a\in\Z$. Exactly $\frac{1}{s_2}$ of the possible $\alpha$-values are multiples of $s_2$. Then
\begin{align*}
   \beta s_2 &\equiv a s_1 s_2 \pmod{m s_1 s_2}
\end{align*}
which has solutions only for $\beta \equiv a s_1 \pmod{b}$. Out of the $m_2 = m s_1$ equivalence classes that $z$ can fall into, only the one defined by $\beta = a s_1$ works. Therefore, 
\begin{align*}
  Fr_b(\done,\dtwo,n) &=  \frac{1}{s_2}\left(\frac{1}{ms_1}\right) = \frac{1}{ms_1 s_2} = \frac{k_s}{b} = \frac{\gcd\left(\frac{n}{\done}, \frac{n}{\dtwo}, b\right)}{b}. \qedhere
\end{align*}
\end{proof}

Since $b = \gcd(r,n)$, we have $\gcd\left(\frac{n}{\done}, \frac{n}{\dtwo}, b\right)=  \gcd\left(\frac{n}{\done},\frac{n}{\dtwo},r\right)$, which completes proof of Theorem \ref{thrm:coroot solutions} and allows us to express $S_{coroot}$ solely in terms of $c,d,r,$ and $n$.

\section{Proof of Main Theorem 1 Part 2}\label{Comparisons}

We are now ready to prove the second part of our main result. In this section, we show how the coroot equations are obtained from the cocharacter equations, and how this relates $S_{coroot}$ and $S_{cochar}$.

\begin{thm}\label{thm:Main2}
The number of solutions to the cocharacter equations can also be defined as
$$S_{cochar} = S_{coroot} \cdot  \frac{1}{n}\cdot \emph{\lcm}\left(\gcd\left(\dtwo, \frac{dn}{\done}\right), \frac{n}{\gcd(n,r)}\right).$$
\end{thm}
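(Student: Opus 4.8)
The plan is not to wrestle the two closed formulas against each other, but to tie the coroot solutions directly back to the \emph{inhomogeneous} cochar equations of Section~\ref{SolvingCochar}. Recall from that section that for $\boldsymbol{x}\in(\Z/n\Z)^r$ with $B_{c,d}\cdot\boldsymbol{x}\equiv a\cdot\boldsymbol{1}_r\pmod n$, the common row value $a$ runs over exactly the multiples of $A:=\gcd\!\big(\dtwo,\tfrac{dn}{\done}\big)$ in $\Z/n\Z$ (Lemma~\ref{lem: Bx = A}), so there are $n/A$ admissible residues $a$; and the shift bijections $W_A\to W_{kA}$ used there show that the fibers $W_a:=\{\boldsymbol{x}: B_{c,d}\boldsymbol{x}\equiv a\boldsymbol{1}_r\}$ all have one common cardinality. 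Since $\boldsymbol{0}\in W_0$ and $W_0$ is the solution set of the cochar equations, that common cardinality is $|W_0|=S_{cochar}$, and for $a$ not a multiple of $A$ we have $W_a=\varnothing$.

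\textbf{The coroot/inhomogeneous dictionary.} Writing $b=\gcd(n,r)$, the crux is the identification
\[
\{\boldsymbol{x}\text{ solving the coroot equations}\}\;=\;\bigsqcup_{a\in\Z/n\Z,\ (n/b)\mid a}W_a .
\]
For ``$\subseteq$'': if $\boldsymbol{x}$ solves the coroot equations then $(c-d)(x_i-x_r)\equiv0$ for $1\le i\le r-1$, so $(c-d)(x_1-x_j)=(c-d)(x_1-x_r)-(c-d)(x_j-x_r)\equiv0\pmod n$ for every $j$, whence $\boldsymbol{x}\in W_a$ for some $a$ by Lemma~\ref{lemma:c-d}; summing the $r$ rows of $B_{c,d}\boldsymbol{x}\equiv a\boldsymbol{1}_r$ gives $(c+(r-1)d)(x_1+\cdots+x_r)\equiv ra\pmod n$, so the last coroot equation forces $ra\equiv0\pmod n$, i.e.\ $(n/b)\mid a$ since $\gcd(n/b,r/b)=1$. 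For ``$\supseteq$'': if $\boldsymbol{x}\in W_a$ with $(n/b)\mid a$, then subtracting row $r$ from row $i$ gives $(c-d)(x_i-x_r)\equiv0$ (the first $r-1$ coroot equations), and summing all rows gives $(c+(r-1)d)(x_1+\cdots+x_r)\equiv ra\equiv0$ (the last one).

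\textbf{Counting.} A fiber $W_a$ with $(n/b)\mid a$ is nonempty precisely when $a$ is also a multiple of $A$, i.e.\ when $\lcm(A,n/b)\mid a$; since $A\mid n$ and $(n/b)\mid n$ we have $\lcm(A,n/b)\mid n$, so there are exactly $n/\lcm(A,n/b)$ such residues $a$ in $\Z/n\Z$. Each contributes a fiber of size $|W_0|=S_{cochar}$, so the disjoint union above has size
\[
S_{coroot}\;=\;\frac{n}{\lcm(A,\,n/b)}\cdot S_{cochar},
\]
which is exactly the claimed identity after substituting $A=\gcd(\dtwo,\tfrac{dn}{\done})$ and $b=\gcd(n,r)$. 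As a byproduct this also reproves the closed form of Theorem~\ref{thrm:coroot solutions} given Theorem~\ref{thm: mainthm}, modulo the elementary fact $\dtwo\,\gcd(\tfrac n{\done},\tfrac n{\dtwo},r)\cdot\lcm(A,\tfrac nb)=nA$, which one could alternatively verify prime-by-prime.

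\textbf{Expected obstacle.} The real content is the dictionary, and within it the observation that the single coroot equation beyond the $(c-d)$-relations, obtained simply by adding up the rows of the inhomogeneous system, is nothing other than the divisibility constraint $(n/b)\mid a$ on the inhomogeneity parameter. Everything after that is bookkeeping with equal-size fibers; the only care needed is the routine remark that $\lcm(A,n/b)$ divides $n$, so that the admissible residues $a$ number $n/\lcm(A,n/b)$ as asserted.
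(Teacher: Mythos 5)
Your proof is correct and follows essentially the same path as the paper's: both hinge on the dictionary between coroot solutions and inhomogeneous cocharacter solutions with inhomogeneity $a\in(n/b)\Z$, followed by a count of equal-sized fibers using Lemma~\ref{lem: Bx = A}. The only difference is cosmetic: you prove the dictionary by direct row arithmetic (subtracting and summing rows of $B_{c,d}\boldsymbol{x}\equiv a\boldsymbol{1}_r$ and invoking $\gcd(n/b,r/b)=1$), whereas the paper packages the same linear combinations into the explicit matrices $L_r$ and $L'_r$, and you count admissible residues $a$ directly rather than routing through the auxiliary quantity $\kappa$.
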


Let $M(c,d,r,n):=\text{\lcm}\left(\gcd\left(\dtwo, \frac{dn}{\done}\right), \frac{n}{\gcd(n,r)}\right).$ We will also prove the following formula for $M(c,d,r,n)$, which will be useful for our investigation into special dimensions for $\mathfrak{W}$ in Section \ref{Structure}.

\begin{prop}
Let $r,n$ have prime factorizations $r = p_1^{\ell_1} p_2^{\ell_2} \dots p_j^{\ell_j}$ and $n = p_1^{m_1} p_2^{m_2} \dots p_j^{m_j}$. For every $1 \leq i \leq j$, let
$$(c-d) \equiv c_i p_i^{s_i} \pmod{p_i^{m_i}}
\text{ and } d \equiv d_i p_i^{t_i} \pmod{p_i^{m_i}}
\text{ for each $1 \leq i \leq j$}$$
so that $0 \leq s_i, t_i \leq m_i$. Let $\mu_i = \min(m_i, \ell_i)$ and $c_i, d_i$ are relatively prime to $p_i$. Then
\begin{equation*} 
M(c,d,r,n) = \prod_{i = 1}^j p_i^{\max(m_i - \mu_i, \min(s_i, t_i + m_i - s_i))}.
\label{M def as primes}
\end{equation*}
\label{prop: M def as primes}
\end{prop}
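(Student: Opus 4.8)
The plan is to reduce the claimed identity to a prime-by-prime comparison of $p$-adic valuations. Since both $\lcm$ and $\gcd$ are multiplicative over primes, and $M(c,d,r,n) = \lcm\!\big(\gcd(\dtwo,\tfrac{dn}{\done}),\ \tfrac{n}{\gcd(n,r)}\big)$, it suffices to fix $i$, abbreviate $p = p_i$, $m = m_i$, $\ell = \ell_i$, $\mu = \mu_i = \min(m,\ell)$, $s = s_i$, $t = t_i$, and prove
\[ v_p\!\big(M(c,d,r,n)\big) = \max\!\Big(v_p\big(\tfrac{n}{\gcd(n,r)}\big),\ \min\big(v_p(\dtwo),\ v_p(\tfrac{dn}{\done})\big)\Big) = \max\big(m-\mu,\ \min(s,\ t+m-s)\big). \]
The easy ingredients are: $v_p(\gcd(n,r)) = \min(m,\ell) = \mu$, so $v_p(n/\gcd(n,r)) = m-\mu$; the defining congruence for $s$ gives $v_p(\done) = \min(v_p(c-d),m) = s$ (the degenerate case $c-d\equiv 0\pmod{p^m}$ is consistent since the convention then forces $s = m$); and the congruence for $t$ says $\min(v_p(d),m) = t$, whence $v_p(dn/\done) = v_p(d) + (m - s)$, which may be safely replaced by $t + m - s$ inside the $\min$ throughout, because when $v_p(d) > m$ one has $v_p(\dtwo)\le m < v_p(d)+m-s$ so the $\min$ equals $v_p(\dtwo)$ either way.

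The crux is computing $v_p(\dtwo)$, for which I would write $c + (r-1)d = (c-d) + rd$ and apply the ultrametric estimate, comparing $v_p(c-d)$ (which equals $s$, or is $\ge m$ in the degenerate case) with $v_p(rd) = \ell + v_p(d)$. If $v_p(c-d) < \ell + v_p(d)$, the sum has valuation exactly $v_p(c-d)$, so $v_p(\dtwo) = s$ and $v_p(M) = \max(m-\mu,\ \min(s, t+m-s))$ on the nose. If $v_p(c-d) > \ell + v_p(d)$, the sum has valuation $\ell + v_p(d)$, so $v_p(\dtwo) = \min(\ell + v_p(d), m)$; here one checks that when $\ell + v_p(d) \ge m$ one is forced to have $s = m$ and both sides collapse to $t$, while when $\ell + v_p(d) < m$ one has $t + m - s < m - \ell = m - \mu$, so the $\max$ on both sides equals $m - \mu$. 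Finally, if $v_p(c-d) = \ell + v_p(d)$, cancellation leaves $v_p(\dtwo)$ undetermined (only $\ge s$); but in that regime a direct computation shows $m - \mu = t + m - s$ (or else $s = m$ and everything collapses to $t$), so $\max(m-\mu,\ \min(v_p(\dtwo), t+m-s)) = m - \mu = \max(m-\mu,\ \min(s, t+m-s))$ regardless of the actual value of $v_p(\dtwo)$. Assembling these cases yields $v_p(M) = \max(m-\mu,\ \min(s, t+m-s))$ for each $p_i$, which is exactly the asserted product.

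I expect the main obstacle to be the ``ambiguous'' case $v_p(c-d) = v_p(rd)$, where $v_p(\dtwo)$ is genuinely not a function of $c,d,r,n$ alone because of cancellation in the sum; the resolution is the observation that in this regime the contribution $m-\mu$ coming from $n/\gcd(n,r)$ always meets or exceeds $\min(s, t+m-s)$, so the $\lcm$ (equivalently, the $\max$ of valuations) is insensitive to the ambiguity. A secondary, purely bookkeeping, difficulty is confirming that the degenerate cases where $c-d \equiv 0$ or $d \equiv 0 \pmod{p_i^{m_i}}$ (forcing $s_i = m_i$ or $t_i = m_i$) stay consistent with the min/max formula; this is routine once one notes that the stated defining congruences amount precisely to $s_i = \min(v_{p_i}(c-d), m_i)$ and $t_i = \min(v_{p_i}(d), m_i)$.
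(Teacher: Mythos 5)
Your proof is correct and follows essentially the same route as the paper's: reduce to a prime-by-prime valuation computation, then split into the three cases according to whether $v_p(c-d)$ is less than, greater than, or equal to $v_p(rd)$, with the equal case handled by observing that the $m-\mu$ contribution from $n/\gcd(n,r)$ absorbs the ambiguity left by possible cancellation in $(c-d)+rd$. The only cosmetic difference is that you index the casework by the true valuations $v_p(c-d)$ and $\ell + v_p(d)$ and fold the degenerate ``overflow'' situations into side remarks, whereas the paper works throughout with the truncated quantities $s_i$, $t_i+\mu_i$ (and the explicit extra factor $p_i^{\tau_i}$ in the cancellation case); one small imprecision is your phrase ``both sides collapse to $t$'' in the subcase $\ell + v_p(d)\ge m$, which should read ``collapse to $\max(m-\mu,t)$,'' but the equality you need still holds.
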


 To obtain the coroot equations from the cocharacter equations, we can multiply the matrix $B_{c,d}$ which defines the cocharacter equations by the $r\times r$ matrix
\[L_r =  \begin{pmatrix}
    1 & 0 & \dots & 0 & -1\\
    0 & 1 & \dots & 0 & -1\\
    \vdots & \vdots & \ddots & & \vdots\\
    0 & 0  & & 1 & -1\\
    1 & 1 &  \dots & 1 & 1
    \end{pmatrix}.\]
That is, the new system of equations specified by this transformation is
\begin{equation*}
    L_r B_{c,d}\cdot \boldsymbol{x} \equiv \boldsymbol{0}_r \pmod{n}
\end{equation*}
which gives us precisely the coroot equations. Likewise, multiplying the coroot equations by 
\[L'_r= \begin{pmatrix}
    r-1 & -1 & \dots & -1 & 1\\
    -1 & r-1 & \dots & -1 & 1\\
    \vdots & \vdots & \ddots & & \vdots\\
    -1 & -1 & & r-1 & 1\\
    -1 & -1 & \dots & -1 & 1
    \end{pmatrix}\]
obtains the cocharacter equations multiplied by $r$. That is,
\[L_r' (L_rB_{c,d})\cdot\boldsymbol{x} \equiv r\cdot B_{c,d}\cdot\boldsymbol{x} \equiv \boldsymbol{0}_r \pmod{n}. \]

As we discussed in Remark \ref{rmk: Fr_1}, if $r$ and $n$ are relatively prime, then $r$ has an inverse $r^{-1}$ in $\Z/n\Z$ and the cocharacter and coroot equations are equivalent. However, if $r$ is not invertible modulo $n$, going back from the coroot equations to the cocharacter equations is more complicated.

Recall that $b = \gcd(r,n)$. Then $r\cdot B_{c,d}\boldsymbol{x} \equiv \boldsymbol{0}_r \pmod{n}$ factors into
\begin{equation*}
    b\left(\frac{r}{b}\right) \begin{pmatrix}
    c & d & d & \dots & d\\
    d & c & d & \dots & d\\
    d & d & c & \dots & d\\
    \vdots & \vdots & & \ddots & \vdots\\
    d & d & d & \dots & c
    \end{pmatrix} \begin{pmatrix}x_1 \\ x_2 \\ x_3 \\ \vdots \\ x_r
    \end{pmatrix} 
    \equiv \begin{pmatrix}
    0 \\ 0 \\ 0 \\ \vdots \\ 0
    \end{pmatrix}\hspace{0.25cm} \left(\text{mod } b\cdot \frac{n}{b} \right).
\end{equation*}
Since both sides of the equation and the modulus are multiples of $b$, this implies that
\begin{equation*}
    \left(\frac{r}{b}\right) B_{c,d}\cdot  \boldsymbol{x} 
    \equiv \boldsymbol{0}_r\hspace{0.25cm} \left(\text{mod } \frac{n}{b} \right).
\end{equation*}
The number $\frac{r}{b}$ is relatively prime to $\frac{n}{b}$ and therefore invertible in $\Z/{\frac{n}{b}}\Z$, so
\begin{equation}
    B_{c,d} \cdot \boldsymbol{x} 
    \equiv \boldsymbol{0}_r \hspace{0.25cm} \left(\text{mod } \frac{n}{b} \right).
    \label{Bx = 0 modn/b}
\end{equation}

\begin{remark}
Again, \eqref{Bx = 0 modn/b} shows that the coroot and cocharacter equations are equivalent when $r$ and $n$ are relatively prime, since then $n=n/b$ and \eqref{Bx = 0 modn/b} recovers exactly the cocharacter equations.
\end{remark}

If we are not in that case, i.e., if $b \neq 1$, then for any coroot solution $\boldsymbol{x}$, we get 
\begin{equation*}
B_{c,d} \boldsymbol{x} \equiv \frac{n}{b} \boldsymbol{v} \pmod{n}
\end{equation*}
for some vector $\boldsymbol{v} \in \Z^r$. We now show that each solution to the coroot equations also satisfies inhomogeneous cocharacter equations for particular values of $a$. 

\begin{lemma}
   The coroot equations are equivalent to the inhomogeneous cocharacter equations with the condition that $a\in (n/b)\Z$.
\end{lemma}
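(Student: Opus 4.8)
The plan is to prove a set equality of solution sets: every solution to the coroot equations satisfies the inhomogeneous cocharacter equations $B_{c,d}\cdot\boldsymbol{x}\equiv a\cdot\boldsymbol{1}_r\pmod n$ for some $a\in(n/b)\Z$, and conversely every solution to the inhomogeneous cocharacter equations with $a\in(n/b)\Z$ satisfies the coroot equations. The forward direction is essentially already laid out in the text preceding the statement: starting from a coroot solution $\boldsymbol{x}$, the computation $L_r'(L_rB_{c,d})\cdot\boldsymbol{x}\equiv r\cdot B_{c,d}\cdot\boldsymbol{x}\equiv\boldsymbol{0}_r\pmod n$ combined with the $b$-factoring argument shows $B_{c,d}\cdot\boldsymbol{x}\equiv\boldsymbol{0}_r\pmod{n/b}$, i.e. $B_{c,d}\boldsymbol{x}\equiv(n/b)\boldsymbol{v}\pmod n$ for some integer vector $\boldsymbol{v}$. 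The one extra thing I need to check is that the vector $(n/b)\boldsymbol{v}$ is actually a constant vector $a\cdot\boldsymbol{1}_r$ — this follows because any coroot solution satisfies $(c-d)(x_i-x_r)\equiv0\pmod n$, and by Lemma \ref{lemma:c-d} (the characterization $cx_1+dx_j\equiv dx_1+cx_j$, equivalently $(c-d)(x_1-x_j)\equiv 0$) this is exactly the condition that makes all rows of $B_{c,d}\boldsymbol{x}$ equal. So every coroot solution solves the inhomogeneous cocharacter equations, and the value $a$ it attains lies in $(n/b)\Z$ by the displayed congruence.

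For the converse, I would take $\boldsymbol{x}$ solving $B_{c,d}\cdot\boldsymbol{x}\equiv a\cdot\boldsymbol{1}_r\pmod n$ with $a=(n/b)a'$ for some integer $a'$, and verify the two families of coroot equations directly. The first family, $(c-d)(x_i-x_r)\equiv0\pmod n$ for $i<r$, is immediate: subtracting the $r$-th row of the inhomogeneous system from the $i$-th row kills the right-hand side entirely (both equal $a$) and leaves $(c-d)(x_i-x_r)\equiv0\pmod n$, exactly as in the proof of Lemma \ref{lemma:c-d}. For the last coroot equation, $(c+(r-1)d)(x_1+\cdots+x_r)\equiv0\pmod n$, I sum all $r$ rows of $B_{c,d}\cdot\boldsymbol{x}\equiv a\cdot\boldsymbol{1}_r$: the left side becomes $(c+(r-1)d)(x_1+\cdots+x_r)$ and the right side becomes $ra\equiv r(n/b)a'$. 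Now $r/b$ is an integer, so $r(n/b)=(r/b)n\equiv0\pmod n$, giving $(c+(r-1)d)(x_1+\cdots+x_r)\equiv0\pmod n$ as desired. Hence the inhomogeneous cocharacter solution with $a\in(n/b)\Z$ is a coroot solution.

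The main obstacle — really the only subtle point — is the forward direction's claim that a coroot solution forces $a\in(n/b)\Z$ rather than merely $a\in\Z$; this is where the $b$-factoring argument of the preceding paragraphs is genuinely used, and I should make sure to invoke \eqref{Bx = 0 modn/b} explicitly and note that $B_{c,d}\boldsymbol{x}\equiv\boldsymbol{0}\pmod{n/b}$ together with rows being constant forces $a\equiv 0\pmod{n/b}$. Everything else is row operations on the linear system. Once the set equality is established, I would close by remarking that it gives a bijective correspondence between coroot solutions and the union over $a\in(n/b)\Z$ of the inhomogeneous solution sets $W_k$, which is the bookkeeping needed to compare $S_{coroot}$ with the partial sum $\sum_{a\in(n/b)\Z}|W_{a/A}|$ and thereby derive Theorem \ref{thm:Main2}.
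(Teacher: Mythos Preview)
Your proposal is correct and follows essentially the same approach as the paper: for the forward direction, both use the first $r-1$ coroot equations to show all rows $w_i$ of $B_{c,d}\boldsymbol{x}$ coincide (you via Lemma~\ref{lemma:c-d}, the paper by directly computing $w_i-w_r=(c-d)(x_i-x_r)$), and both invoke the preceding derivation of \eqref{Bx = 0 modn/b} to force the common value into $(n/b)\Z$. The converse is handled identically in both, by subtracting and summing rows; your closing remark about the bijection with the $W_k$'s is also exactly how the paper proceeds to Theorem~\ref{thm:Main2}.
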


\begin{proof}
Let $\boldsymbol{x}$ be a solution to the coroot equations and let the $i$th row of the left-hand side of Equation \eqref{Bx = 0 modn/b} be $$w_i = cx_i + \sum_{j \neq i} dx_j.$$ Then by definition,
\begin{align*}
    w_i - w_r &= (c - d)(x_i - x_r) \equiv 0 \pmod{n}.
\end{align*}
Thus, for some $k\in \{1,...,b\}$,
\begin{equation}\label{eqn:inhom a=nk/b}
 B_{c,d} \boldsymbol{x} \equiv \frac{n}{b} k \cdot \boldsymbol{1}_r \pmod{n}.\end{equation}

Similarly, suppose $\boldsymbol{x}$ satisfies \eqref{eqn:inhom a=nk/b}. Then defining $w_i$ as above,
\begin{align*}
(c-d)(x_i - x_r) &\equiv w_i - w_r \equiv k\frac{n}{b} - k\frac{n}{b} \equiv 0 \pmod{n}\\
(c+(r-1)d)(x_1 + x_2 + \dots x_r) &\equiv w_1 + w_2 + \dots + w_r \equiv r\left(k\frac{n}{b}\right) \equiv 0 \pmod{n}. \qedhere
\end{align*}
\end{proof}

In Section \ref{SolvingCochar} we showed that Equation \eqref{eqn:inhom a=nk/b} has solutions if and only if $k\frac{n}{b}$ is a multiple of $A(c,d,r,n)$, and that each class of solutions (defined by having the same $k$) is of the same size. As in that section, we want to find the smallest nonzero $k$ for which \eqref{eqn:inhom a=nk/b} has a solution.

\begin{definition}
Let $\kappa(c,d,r,n)$ be the smallest positive value of $k$ such that there is a solution to Equation \eqref{eqn:inhom a=nk/b}. 
\end{definition}

Again, it will often be clear from context that we are working with a particular fixed $c,d,r,n$ in which case we will write $\kappa$ and $M$ for brevity instead of $\kappa(c,d,r,n)$ and $M(c,d,r,n)$.

\begin{proof}[Proof of Theorem \ref{thm:Main2}]
We can relate the values of $\kappa(c,d,r,n)$ and $A(c,d,r,n)$ as follows:

\begin{equation*}
\kappa \cdot \frac{n}{b} = \text{lcm}\left(A, \frac{n}{b}\right) = \text{lcm}\left(\gcd\left(\dtwo, \frac{d n}{\done} \right), \frac{n}{b}\right).
\end{equation*}
Let $M(c,d,r,n):= \text{lcm}\left(\gcd\left(\dtwo, \frac{d n}{\done} \right), \frac{n}{b}\right).$ Then there are $\frac{n}{M} = \frac{b}{\kappa}$ equivalence classes of solutions to Equation \eqref{eqn:inhom a=nk/b}. Exactly one of these equivalence classes---the one given by $k = b$---gives the solutions to the cocharacter equations. Therefore,
\begin{equation*}
    S_{cochar} = S_{coroot} \cdot \frac{M}{n} = S_{coroot} \cdot \frac{\kappa}{b}.
    \label{Scoch = M/n Scort}
\end{equation*}

Substituting in our earlier expressions for the values of $S_{coroot}$ and $M$, we obtain
\begin{equation*}
    S_{cochar}=\frac{\done^{r-1} \dtwo}{n} \gcd\left(\frac{n}{\done}, \frac{n}{\dtwo}, r\right) \text{lcm}\left(\frac{n}{\gcd(r,n)}, \gcd\left(\dtwo, \frac{d n}{\done} \right)\right). \qedhere
\end{equation*}
\end{proof}

Although it is not immediately clear from looking at this equation, this formula is equivalent to the one given in Theorem \ref{thm: mainthm}. One area of future work would be to simplify this expression and more directly understand why it is equivalent to the statement of Theorem \ref{thm: mainthm}. Furthermore, while this formula appears more complicated than that of Theorem \ref{thm: mainthm}, this approach is perhaps more suitable for extending past $GL_r(F)$, as the metaplectic Whittaker functions developed in Section \ref{MWhit} can be defined over any reductive group and this approach is more closely related to the root data structure of reductive groups.

Using the same visualization tables we used in Section \ref{Cochar} for $S_{cochar}$ shows more directly how $M$ and $\kappa$ change as we vary $c,d,r,$ and $n$. Here, for a fixed $r,n$, let the entry in position $(c,d)$ be $\kappa(c,d,r,n)$. (To achieve a matching table for $M$, multiply the $\kappa$ table by $n/b$.) 

\begin{example} For $n = 8 = 2^3$ and $r = 2^\ell$, the following tables show how $\kappa$ changes as $\ell$ increases from $1$ to $3$.  Because $M$ and $\kappa$ depend on $\mu = \min(\ell, m)$ rather than on $\ell$, any $\kappa$ table for $\ell >3$ would be identical to the table for $\ell = 3$.

\begin{center}
\scalebox{0.8}{
\begin{tikzpicture}
\node(0,0){$\small\begin{matrix}
    2 & 1 & 1 & 1 & 1 & 1 & 1 & 1\\
    1 & 1 & 1 & 1 & 1 & 1 & 1 & 1\\
    1 & 1 & 1 & 1 & 1 & 1 & 1 & 1\\
    1 & 1 & 1 & 1 & 1 & 1 & 1 & 1\\
    1 & 1 & 1 & 1 & 1 & 1 & 1 & 1\\
    1 & 1 & 1 & 1 & 1 & 1 & 1 & 1\\
    1 & 1 & 1 & 1 & 1 & 1 & 1 & 1\\
    1 & 1 & 1 & 1 & 1 & 1 & 1 & 1
    \end{matrix}$};
\node [xshift =5cm](0,0){$\small\begin{matrix}
     4 & 1 & 1 & 1 & 2 & 1 & 1 & 1\\
     1 & 1 & 1 & 1 & 1 & 1 & 1 & 1\\
     1 & 1 & 1 & 1 & 1 & 1 & 2  & 1\\
     1 & 1 & 1 & 1 & 1 & 1 & 1 & 1\\
     2 & 1 & 1 & 1 & 2 & 1 & 1 & 1\\
     1 & 1 & 1 & 1 & 1 & 1 & 1 & 1\\
     1 & 1 & 2 & 1 & 1 & 1 & 1 & 1\\
     1 & 1 & 1 & 1 & 1 & 1 & 1 & 1
     \end{matrix}$};
\node[xshift =10cm] (0,0){$\small\begin{matrix}
    8 & 1 & 2 & 1 & 4 & 1 & 2 & 1\\
    1 & 1 & 1 & 2 & 1 & 2 & 1 & 2\\
    2 & 1 & 2 & 1 & 2 & 1 & 4  & 1\\
    1 & 2 & 1 & 1 & 1 & 2 & 1 & 2\\
    4 & 1 & 2 & 1 & 4 & 1 & 2 & 1\\
    1 & 2 & 1 & 2 & 1 & 1 & 1 & 2\\
    2 & 1 & 4 & 1 & 2 & 1 & 2 & 1\\
    1 & 2 & 1 & 2 & 1 & 2 & 1 & 1
    \end{matrix}$};

\node[yshift=-2.2cm] {$n = 8, r = 2$};
\node[xshift=5cm, yshift=-2.2cm] {$n = 8, r = 4$};
\node[xshift=10cm, yshift=-2.2cm] {$n = 8, r = 8$};

\draw[red, thick] (-2,1.6) -- (2,-1.6);  
\draw[red, thick,xshift=5cm] (-2,1.6) -- (2,-1.6);  
\draw[red, thick,xshift=10cm] (-2,1.6) -- (2,-1.6);  
\draw[blue, thick, xshift=5cm] (0.1,1.6) -- (2,.1); 
\draw[blue, thick, xshift=5cm] (-2,0) -- (0,-1.6); 
\draw[blue, thick, xshift=10cm] (0.1,1.6) -- (2,.1); 
\draw[blue, thick, xshift=10cm] (-2,0) -- (0,-1.6); 
\draw[LimeGreen, thick, xshift=10cm] (1.1,1.6) -- (2,.9); 
\draw[LimeGreen, thick, xshift=10cm] (-.9,1.6) -- (2,-.7);
\draw[LimeGreen, thick, xshift=10cm] (-2,.8) -- (1,-1.6); 
\draw[LimeGreen, thick, xshift=10cm] (-2,-.8) -- (-1,-1.6);
\end{tikzpicture}
}
\end{center}
The entries in these tables are determined by the main diagonals they lie on, which are described by $s$, and the columns they lie in, which are described by $t$ and index how far down the main diagonal an entry is. In particular, notice that the only difference between the matrices for $\ell$ and $\ell+1$ is that a specific fraction of the elements on each of the diagonals in the latter matrix have been multiplied by 2. For example, for $\ell=2$, this fraction is 1/4 for the red diagonal and 1/2 for the blue.
\end{example}

These tables are also useful for visualizing the effect of combining distinct primes. 



\begin{example}
When $r = 2^2$ and $n = 2^2\cdot 3$, we see that the table for $\kappa$ is a $3\times 3$ tessellation of that for $r=2^2, n=2^2$:
\begin{equation*}
\begin{tikzpicture}[baseline=-1pt]
\node at (0,0){$\begin{matrix}
    4 & 1 & 2 & 1\\
    1 & 1 & 1 & 2\\
    2 & 1 & 2 & 1\\
    1 & 2 & 1 & 1
\end{matrix}$};
\node at (0,-1.5){$r = 2^2, n = 2^2$};
\end{tikzpicture}  
\hspace{2cm}
\begin{tikzpicture}[baseline=0pt]
\node at (0,0){$\begin{array}{cccc|cccc|cccc}
    4 & 1 & 2 & 1 & 4 & 1 & 2 & 1 & 4 & 1 & 2 & 1\\
    1 & 1 & 1 & 2 & 1 & 1 & 1 & 2 & 1 & 1 & 1 & 2\\
    2 & 1 & 2 & 1 & 2 & 1 & 2 & 1 & 2 & 1 & 2 & 1\\
    1 & 2 & 1 & 1 & 1 & 2 & 1 & 1 & 1 & 2 & 1 & 1\\
    \hline
    4 & 1 & 2 & 1 & 4 & 1 & 2 & 1 & 4 & 1 & 2 & 1\\
    1 & 1 & 1 & 2 & 1 & 1 & 1 & 2 & 1 & 1 & 1 & 2\\
    2 & 1 & 2 & 1 & 2 & 1 & 2 & 1 & 2 & 1 & 2 & 1\\
    1 & 2 & 1 & 1 & 1 & 2 & 1 & 1 & 1 & 2 & 1 & 1\\
    \hline
    4 & 1 & 2 & 1 & 4 & 1 & 2 & 1 & 4 & 1 & 2 & 1\\
    1 & 1 & 1 & 2 & 1 & 1 & 1 & 2 & 1 & 1 & 1 & 2\\
    2 & 1 & 2 & 1 & 2 & 1 & 2 & 1 & 2 & 1 & 2 & 1\\
    1 & 2 & 1 & 1 & 1 & 2 & 1 & 1 & 1 & 2 & 1 & 1
\end{array}$};
\node at (0,-3){$r = 2^2, n = 12 = 3 \cdot 2^2$};
\end{tikzpicture}
\end{equation*}
\end{example}

\begin{example} Below are the $\kappa$ tables for $n = 6$ and $r = 2,3,6,$ respectively. Note that the table for $r = 6$ is obtained by multiplying the tables for $r = 2$ and $r = 3$ together elementwise. Upon proving Proposition \ref{prop: M def as primes}, we will see that this is true in greater generality.
\begin{equation*}
\begin{tikzpicture}
    \node at (0,0){$\begin{array}{cc|cc|cc}
     2 & 1 & 2 & 1 & 2 & 1\\
     1 & 1 & 1 & 1 & 1 & 1\\
     \hline
     2 & 1 & 2 & 1 & 2 & 1\\
     1 & 1 & 1 & 1 & 1 & 1\\
     \hline
     2 & 1 & 2 & 1 & 2 & 1\\
     1 & 1 & 1 & 1 & 1 & 1
     \end{array}$};
     \node at (0,-1.75){$r = 2, n = 6$};
\end{tikzpicture}
\hspace{1cm}
\begin{tikzpicture}
    \node at (0,0){$\begin{array}{ccc|ccc}
     3 & 1 & 1 & 3 & 1 & 1\\
     1 & 1 & 1 & 1 & 1 & 1\\
     1 & 1 & 1 & 1 & 1 & 1\\
     \hline
     3 & 1 & 1 & 3 & 1 & 1\\
     1 & 1 & 1 & 1 & 1 & 1\\
     1 & 1 & 1 & 1 & 1 & 1
     \end{array}$};
     \node at (0,-1.75){$r = 3, n = 6$};
\end{tikzpicture}
\hspace{1cm}
\begin{tikzpicture}
    \node at (0,0){$\begin{matrix}
     6 & 1 & 2 & 3 & 2 & 1\\
     1 & 1 & 1 & 1 & 1 & 1\\
     2 & 1 & 2 & 1 & 2 & 1\\
     3 & 1 & 1 & 3 & 1 & 1\\
     2 & 1 & 2 & 1 & 2 & 1\\
     1 & 1 & 1 & 1 & 1 & 1\\
     \end{matrix}$};
     \node at (0,-1.75){$r = 6, n = 6$};
\end{tikzpicture}.
\end{equation*}
\end{example}

Expressing the values of $M$ and $\kappa$ directly in terms of the prime factors of $n,r,c$ and $d$, we are ready to prove Proposition \ref{prop: M def as primes}.

\begin{proof}[Proof of Proposition \ref{prop: M def as primes}]
Let $r = p_1^{\ell_1} p_2^{\ell_2} \dots p_j ^{\ell_j}$ and $n = p_1^{m_1} p_2^{m_2} \dots p_j ^{m_j},$ where some of $\ell_i$ or $m_i$ may be 0. Recalling the definition of $\dtwo,$ we have that
\[ M = \text{lcm} \left(\gcd\left(c+(r-1)d, n, \frac{dn}{\done}\right), \frac{n}{b}\right).\]
Thus, the only prime factors of $M$ are those that are prime factors of $n$, and $M$ is multiplicative over these prime powers. Considering only the power of $p_i$ arising in $M$ for some $i \in \{1,j\}$, let
$$(c-d) \equiv c_i p_i^{s_i} \pmod{p_i^{m_i}}
\text{ and } d \equiv d_i p_i^{t_i} \pmod{p_i^{m_i}}
\text{ for each $1 \leq i \leq j$}$$
so that $0 \leq s_i, t_i \leq m_i$, and $c_i,d_i$ are relatively prime to $p_i$. Let $\mu_i = \min(m_i, \ell_i)$. Then, $n/b = b_i p_i^{m_i - \mu_i}$, where $b_i$ is relatively prime to $p_i$.


Then consider the power of $p_i$ arising from $\gcd\left(c+(r-1)d, n, \frac{dn}{\done}\right)$: recalling that $\done = \gcd(c-d,n)$, the power of $p_i$ in $\done$ is $\min\{s_i, m_i\} = s_i$. Then, the power of $p_i$ in $dn/\done$ is $t_i+m_i - s_i$. It remains to consider the power arising in the first component of the gcd.

Consider $c+ (r-1)d = c-d + r\cdot d$. Substituting in the factorizations, we have
\[ c+ (r-1)d \equiv c_i p_i^{s_i} + r_i d_i p_i^{t_i + \mu_i} \pmod{p_i^{m_i}}, \]
where $r_i$ is relatively prime to $p_i$.

We consider three cases. If $s_i < t_i + \mu_i$, the power of $p_i$ in this component is $s_i$. Thus, the power of $p_i$ in the gcd is $\min\{s_i, t_i+m_i-s_i\}$, since $s_i\leq m_i$. The power of $p_i$ in $M$ is then
\[ \max\{m_i-\mu_i, \min\{s_i, t_i+m_i-s_i\}\}.\]

Next suppose that $s_i > t_i + \mu_i$. Then we have that the power of $p_i$ in the gcd is $\min\{t_i+\mu_i, t_i+m_i-s_i\}$. However, then the power of $p_i$ in $M$ is
\begin{align*}
   \max\{m_i-\mu_i, \min\{t_i+\mu_i, t_i+m_i-s_i\}\} = m_i-\mu_i,
\end{align*}
since $s_i > t_i + \mu_i$, so $t_i + m_i - s_i < m_i - \mu_i$. 

Lastly, if $s_i = t_i + \mu_i$, then
\[ c-d + rd \equiv p_i^{s_i} (c_i + r_id_i) \pmod{p_i^{m_i}}.\]
Note that $c_i + r_i d_i$ may create an additional factor $p_i^{\tau_i}$ for some integer $\tau_i\geq 0$. Then the power of $p_i$ appearing in the gcd is 
\begin{align*}
    \min\{s_i+\tau_i, m_i, t_i + m_i-s_i\} = \min\{s_i+\tau_i, m_i- \mu_i\}
\end{align*}
Then the power of $p_i$ in $M$ is 
\begin{align*}
    \max\{ \min\{s_i+\tau_i, m_i- \mu_i\}, m_i-\mu_i\} = m_i - \mu_i.
\end{align*}
Collecting the three cases together, the expression 
\[ \max\{m_i - \mu_i, \min\{s_i, t_i + m_i - s_i\}\}\]
matches the power of $p_i$ in $M$ in each case, completing the proof of the proposition.
\end{proof}

\begin{cor} The quantity $\kappa(c,d,r,n)$ is multiplicative over powers of distinct primes.
\end{cor}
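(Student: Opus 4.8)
The plan is to obtain $\kappa$ directly from $M$, whose prime factorization is already pinned down by Proposition~\ref{prop: M def as primes}. From the proof of Theorem~\ref{thm:Main2} we have $\kappa\cdot\frac{n}{b}=M(c,d,r,n)$ with $b=\gcd(r,n)$, so $\kappa=M/(n/b)$. Since $M=\lcm\!\left(\gcd\!\left(\dtwo,\frac{dn}{\done}\right),\frac{n}{b}\right)$ has $n/b$ as one of its arguments, $n/b$ divides $M$ and this quotient is a genuine positive integer; at any prime $p\mid n$ its valuation is $v_p(\kappa)=v_p(M)-v_p(n/b)$, and at primes not dividing $n$ both valuations vanish.

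First I would record the two factorizations. By Proposition~\ref{prop: M def as primes}, writing $m_i=v_{p_i}(n)$, $\ell_i=v_{p_i}(r)$, $\mu_i=\min(m_i,\ell_i)$, and $s_i,t_i$ for the $p_i$-adic valuations of $c-d$ and $d$ capped at $m_i$, one has $v_{p_i}(M)=\max\!\bigl(m_i-\mu_i,\ \min(s_i,\ t_i+m_i-s_i)\bigr)$. On the other hand $b=\prod_i p_i^{\mu_i}$, so $v_{p_i}(n/b)=m_i-\mu_i$. Subtracting,
\[
v_{p_i}(\kappa)=\max\!\bigl(0,\ \min(s_i,\ t_i+m_i-s_i)-(m_i-\mu_i)\bigr),
\]
which depends only on the local data $m_i,\ell_i,s_i,t_i$ at the single prime $p_i$. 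Hence $\kappa(c,d,r,n)=\prod_i p_i^{\,v_{p_i}(\kappa)}$, and since running the identical computation for the inputs $(c,d,p_i^{\ell_i},p_i^{m_i})$ returns exactly the factor $p_i^{\,v_{p_i}(\kappa)}$, we conclude $\kappa(c,d,r,n)=\prod_i\kappa(c,d,p_i^{\ell_i},p_i^{m_i})$ --- multiplicativity over powers of distinct primes.

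There is essentially no obstacle beyond what Proposition~\ref{prop: M def as primes} already supplies; the one point worth checking explicitly is that $n/b\mid M$, so that the valuation formula above is nonnegative and the statement is about honest integers, and this is immediate from $M$ being an $\lcm$ with $n/b$ as one argument (equivalently, from $m_i-\mu_i\le\max(m_i-\mu_i,\cdot)$ termwise). As an alternative route that avoids $M$ entirely, one could note that $A(c,d,r,n)=\gcd\!\left(\dtwo,\frac{dn}{\done}\right)$ and $n/b$ are each multiplicative over the primes dividing $n$ by their explicit valuation descriptions, and that $\kappa=\lcm(A,\,n/b)/(n/b)$; since $\lcm$ and division by a divisor both respect the prime-by-prime structure, multiplicativity of $\kappa$ follows in the same way.
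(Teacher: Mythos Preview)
Your proposal is correct and follows exactly the route the paper intends: the corollary is stated without proof immediately after Proposition~\ref{prop: M def as primes}, and your argument---deducing $\kappa=M/(n/b)$ from the proof of Theorem~\ref{thm:Main2}, then reading off the prime-by-prime valuation $v_{p_i}(\kappa)=\max\bigl(0,\min(s_i,t_i+m_i-s_i)-(m_i-\mu_i)\bigr)$ as a purely local quantity---is precisely the implicit reasoning. Your explicit check that $n/b\mid M$ and your alternative route via $\kappa=\lcm(A,n/b)/(n/b)$ are both sound, though the paper does not spell out either.
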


In the next section, we will see that the two different approaches for $S_{cochar}$ are each useful in different ways. One potentially fruitful avenue for future exploration would be to see precisely why these two formulae are equal, as it is not easily apparent. As the second approach relates more directly to the root structure of $GL_r$ as a reductive group, but the first approach yields a simpler formula and proof, this connection would illuminate a way to extend the simpler formula to general reductive groups.

\section{Structure of the Whittaker Space}\label{Structure}

These investigations into the structure of $\Lambda_{fin}$ not only give us a method of calculating $\dim(\mathfrak{W})$, they also illuminate how the parameters $c,d,r,$ and $n$ affect the structure of $\mathfrak{W}$ in different ways. In this section, we start with a few natural corollaries to both parts of Main Theorem 1 (Theorems \ref{thm: mainthm} and \ref{thm:Main2}) and discuss how they relate to the literature. We then develop necessary and sufficient conditions for $\dim(\mathfrak{W})$ to be of maximum and minimum dimension, as well as the conditions for several other desirable dimensions for further connections.

\begin{cor}
\label{cor:d=0}
From Theorem \ref{thm: mainthm}, we have the following natural results about $\dim(\mathfrak{W})$:
\begin{align*}
\dim(\mathfrak{W}) = \begin{cases}
    \left(\frac{n}{\gcd(c,n)}\right)^r & \text{ if } d \equiv 0\pmod{n}\\
    \left(\frac{n}{\gcd(d,n)}\right)^{r-1} \cdot \frac{n}{\gcd((r-1)d, n)} & \text{ if } c\equiv 0\pmod{n}
\end{cases}
\end{align*}
\end{cor}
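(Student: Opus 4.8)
The plan is to derive both cases directly from Theorem~\ref{thm: mainthm}, which states $\dim(\mathfrak{W}) = S_{cochar}(c,d,r,n) = \done^{r-1}\gcd\!\left(\dtwo, \frac{dn}{\done}\right)$, by substituting the relevant congruence condition on $c$ or $d$ and simplifying the diagonal numbers $\done = \gcd(c-d,n)$ and $\dtwo = \gcd(c+(r-1)d,n)$.

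\begin{proof}
We treat each case separately, applying Theorem~\ref{thm: mainthm}.

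First suppose $d \equiv 0 \pmod{n}$. Then $c - d \equiv c \pmod{n}$, so $\done = \gcd(c-d,n) = \gcd(c,n)$. Likewise $c + (r-1)d \equiv c \pmod{n}$, so $\dtwo = \gcd(c,n) = \done$. Finally, $\frac{dn}{\done}$ is a multiple of $n$ (since $n \mid d$), hence $\gcd\!\left(\dtwo, \frac{dn}{\done}\right) = \gcd(\done, n\cdot(\text{integer})) = \done$, because $\done \mid n$. Therefore
\begin{equation*}
\dim(\mathfrak{W}) = \done^{r-1}\cdot \done = \done^{r} = \gcd(c,n)^{r} = \left(\frac{n}{n/\gcd(c,n)}\right)^{r}.
\end{equation*}
Wait---we must express this as $\left(\frac{n}{\gcd(c,n)}\right)^r$; in fact $\dim(\mathfrak{W}) = |\widetilde{T}/H| = n^r/|\Lambda_{fin}|$ by Theorem~\ref{thm:restate1.1}, and $|\Lambda_{fin}| = S_{cochar} = \done^r = \gcd(c,n)^r$, so
\begin{equation*}
\dim(\mathfrak{W}) = \frac{n^r}{\gcd(c,n)^r} = \left(\frac{n}{\gcd(c,n)}\right)^r,
\end{equation*}
as claimed.

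Now suppose $c \equiv 0 \pmod{n}$. Then $c - d \equiv -d \pmod{n}$, so $\done = \gcd(-d,n) = \gcd(d,n)$. Also $c + (r-1)d \equiv (r-1)d \pmod{n}$, so $\dtwo = \gcd((r-1)d, n)$. For the remaining gcd, note $\frac{dn}{\done} = \frac{dn}{\gcd(d,n)} = n\cdot\frac{d}{\gcd(d,n)}$, which is a multiple of $n$, so $\gcd\!\left(\dtwo, \frac{dn}{\done}\right) = \dtwo$ since $\dtwo \mid n$. Hence $S_{cochar} = \done^{r-1}\dtwo = \gcd(d,n)^{r-1}\gcd((r-1)d,n)$, and by Theorem~\ref{thm:restate1.1},
\begin{equation*}
\dim(\mathfrak{W}) = \frac{n^r}{\gcd(d,n)^{r-1}\gcd((r-1)d,n)} = \left(\frac{n}{\gcd(d,n)}\right)^{r-1}\cdot\frac{n}{\gcd((r-1)d,n)},
\end{equation*}
which completes the proof.
\end{proof}

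The only subtlety---and the one point worth stating carefully---is the observation that whenever $d$ (resp.\ $c$) is divisible by $n$, the term $\frac{dn}{\done}$ is an integer multiple of $n$, so it is absorbed by the smaller argument $\dtwo$ of the outer gcd; everything else is a routine substitution into the formula of Theorem~\ref{thm: mainthm} together with the conversion $\dim(\mathfrak{W}) = n^r/|\Lambda_{fin}|$ from Theorem~\ref{thm:restate1.1}.
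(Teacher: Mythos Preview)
Your argument is correct and follows essentially the same route as the paper: substitute the congruence into $\done,\dtwo$, observe that $\frac{dn}{\done}$ becomes a multiple of $n$ so the outer gcd collapses, and then convert $S_{cochar}$ to $\dim(\mathfrak{W})$ via $\dim(\mathfrak{W})=n^r/|\Lambda_{fin}|$. The one blemish is that your plan (and the first displayed line of the $d\equiv 0$ case) misidentifies Theorem~\ref{thm: mainthm} as giving $\dim(\mathfrak{W})$ rather than $S_{cochar}=|\Lambda_{fin}|$; you catch and fix this mid-proof with the ``Wait---'' aside, but that self-correction should be edited out and the opening sentence amended so the computation reads cleanly from the start.
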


\begin{proof}
Recall that by Theorem \ref{thm: Frechette}, we have $\dim(\mathfrak{W}) = n^r/|S_{cochar}(c,d,r,n)|$. Then if $d \equiv 0 \pmod n$,
\begin{align*}
S_{cochar}(c,d,r,n)
&= \gcd(c-d,n)^{r-1} \gcd\left(c+(r-1)d, n, \frac{dn}{\gcd(c-d,n)}\right)\\
&= \gcd(c,n)^r.
\end{align*}
Likewise, if $c\equiv 0\equiv n$, then
\begin{align*}
S_{cochar}(c,d,r,n)
&= \gcd(-d,n)^{r-1} \gcd\left((r-1)d, n, \frac{dn}{\gcd(-d,n)}\right)\\
&= \gcd(d,n)^{r-1}\gcd((r-1)d,n).\qedhere
\end{align*}
\end{proof}

As we can see from this corollary, the parameters $c$ and $d$ play significantly different roles in influencing the structure of the Whittaker function space. In the simplest $n$-fold metaplectic cover ($c = 1, d= 0$), we see $|\widetilde{T}| = n^r$, which allowed 
Brubaker, Bump, and Buciumas to map $\mathfrak{W}$ isomorphically to a quantum module of dimension $n^r$ in \cite{BBBIce} to explain the lattice model phenomena discovered by Brubaker, Bump, Chinta, Friedberg, and Gunnells \cite{BBCFG}. In the same spirit, the second author showed in \cite{Frechette} that this connection extends quite naturally to an isomorphism for any cover coming from a diagonal matrix (i.e., $d\equiv 0$). However, incorporating the parameter $d$ adds complications, as the quantum module (which we will discuss later in Section) \emph{does not see} the factor of $\gcd\left(c+(r-1)d, n, \frac{dn}{\gcd(c-d,n)}\right)$ appearing in $\dim(\mathfrak{W})$. Thus, to understand this connection, we will need additional information about the structure of $\mathfrak{W}$.

\begin{cor}
\label{cor:c=d=0}
We have $\dim(\mathfrak{W}) = 1$ (that is, of minimum size) if and only if $c\equiv d\equiv 0\pmod{n}$.

\end{cor}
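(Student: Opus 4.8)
The plan is to prove both directions using the closed formula for $\dim(\mathfrak{W}) = n^r / S_{cochar}(c,d,r,n)$ coming from Theorem \ref{thm: Frechette} and Theorem \ref{thm: mainthm}, namely $\dim(\mathfrak{W}) = n^r / \left(\done^{r-1}\gcd(\dtwo, dn/\done)\right)$. Minimum dimension means $\dim(\mathfrak{W}) = 1$, equivalently $S_{cochar}(c,d,r,n) = n^r$. Since $S_{cochar} = \done^{r-1}\gcd(\dtwo, dn/\done)$ and both $\done = \gcd(c-d,n)$ and $\gcd(\dtwo, dn/\done)$ are at most $n$, the product is at most $n^{r-1}\cdot n = n^r$, with equality forcing $\done = n$ and $\gcd(\dtwo, dn/\done) = n$ simultaneously (this uses $r \geq 1$; the $r=1$ case should be checked separately but is immediate since then $B_{c,d} = (c)$ and $\Lambda_{fin}$ has size $\gcd(c,n)$).

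For the reverse direction, if $c \equiv d \equiv 0 \pmod n$ then $B_{c,d} \equiv 0$, so every $\boldsymbol{x} \in (\Z/n\Z)^r$ lies in $\Lambda_{fin}$, giving $S_{cochar} = n^r$ and hence $\dim(\mathfrak{W}) = 1$ directly; alternatively one checks $\done = \gcd(0,n) = n$ and $\dtwo = \gcd(0,n) = n$, and $dn/\done = 0$, so $\gcd(\dtwo, 0) = n$, recovering $S_{cochar} = n^{r-1}\cdot n = n^r$. For the forward direction, suppose $\dim(\mathfrak{W}) = 1$. Then $\done = \gcd(c-d, n) = n$, so $n \mid c - d$. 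Also $\gcd(\dtwo, dn/\done) = n$; since $\done = n$ we have $dn/\done = d$, so this says $\gcd(\dtwo, d) = n$, which forces $n \mid d$. Combined with $n \mid c - d$ this gives $n \mid c$ as well, so $c \equiv d \equiv 0 \pmod n$.

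The main (minor) obstacle is being careful with the inequality argument: I need $\done^{r-1} \le n^{r-1}$ and $\gcd(\dtwo, dn/\done) \le n$, and to argue that equality in the product forces equality in each factor — this is valid because all quantities are positive integers and $\done^{r-1} \mid n^{r-1}$ while $\gcd(\dtwo, dn/\done) \mid n$ (the latter since $\dtwo \mid n$). Another small point: when $d \equiv 0 \pmod n$, the expression $dn/\done$ should be read as $0$ and $\gcd(\dtwo, 0) = \dtwo$; I will note this convention so the formula behaves correctly. No serious difficulty is expected; this is essentially an extraction of the extremal case of the Main Theorem formula, and I would present it in a few lines, referencing Corollary \ref{cor:d=0} as a sanity check (with $c \equiv 0$ and $d \equiv 0$, the first case of that corollary gives $\dim(\mathfrak{W}) = (n/\gcd(0,n))^r = 1$).
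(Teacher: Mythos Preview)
Your proposal is correct and follows essentially the same argument as the paper: both directions use the formula $S_{cochar}=\done^{r-1}\gcd(\dtwo,dn/\done)$ from Theorem~\ref{thm: mainthm}, observe that each factor divides $n$, and conclude that $S_{cochar}=n^r$ forces $\done=n$ and then $\gcd(\dtwo,d)=n$, hence $n\mid d$ and $n\mid c$. Your write-up is slightly more explicit about the divisibility step and the $r=1$ edge case, but the substance is identical to the paper's proof.
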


\begin{proof}
The backward direction follows from Corollary \ref{cor:d=0}. Now assume $S_{cochar}=n^r$. Since each of the $r$ factors in Theorem \ref{thm: mainthm} are factors of $n$, we must have $\done=\gcd(c-d,n) =n$ and so 
    $$S_{cochar} = n^{r-1} \gcd\left(c-(r+1)d, n, c,d\right).$$ 
So we must also have $\gcd(n,c,d) =n$, which requires that $c,d \equiv n \pmod{n}.$
\end{proof}

\begin{cor}\label{cor:cdcoprime}
We have $\dim(\mathfrak{W}) = n^r$ (that is, of maximum size) if and only if $c-d$ and $c+(r-1)d$ are coprime to $n$.
\end{cor}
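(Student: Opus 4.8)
The plan is to use the closed formula from Theorem \ref{thm: mainthm}, namely $\dim(\mathfrak{W}) = n^r / S_{cochar}(c,d,r,n)$ where $S_{cochar}(c,d,r,n) = \done^{r-1}\gcd(\dtwo, dn/\done)$ with $\done = \gcd(c-d,n)$ and $\dtwo = \gcd(c+(r-1)d,n)$. Thus $\dim(\mathfrak{W}) = n^r$ exactly when $S_{cochar} = 1$, so the task reduces to showing that $\done^{r-1}\gcd(\dtwo, dn/\done) = 1$ if and only if both $\done = 1$ and $\dtwo = 1$, i.e. if and only if $c-d$ and $c+(r-1)d$ are each coprime to $n$.

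First I would establish the backward direction: assume $\gcd(c-d,n) = 1$ and $\gcd(c+(r-1)d,n)=1$, so $\done = \dtwo = 1$. Then $\done^{r-1} = 1$, and $\gcd(\dtwo, dn/\done) = \gcd(1, dn) = 1$, hence $S_{cochar} = 1$ and $\dim(\mathfrak{W}) = n^r$. For the forward direction, assume $S_{cochar} = \done^{r-1}\gcd(\dtwo, dn/\done) = 1$. Since this is a product of positive integers each $\geq 1$, every factor must equal $1$. In particular $\done^{r-1} = 1$; since $r \geq 1$ and $\done$ is a positive integer, unless $r = 1$ this forces $\done = 1$ directly. (The case $r = 1$ needs a separate remark: when $r=1$ we have $c + (r-1)d = c$ and $c - d$ are the two relevant quantities, and $GL_1$ makes the torus abelian in a degenerate way; I would check that the formula still reads $\dim(\mathfrak{W}) = n/\gcd(c,n)$, consistent with the claim, and handle it as a boundary case or simply note $r \geq 2$ is the case of interest.) Assuming $r \geq 2$ and $\done = 1$, the remaining factor is $\gcd(\dtwo, dn/\done) = \gcd(\dtwo, dn) = 1$. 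Since $\dtwo = \gcd(c+(r-1)d, n)$ divides $n$, it divides $dn$, so $\gcd(\dtwo, dn) = \dtwo$; hence $\dtwo = 1$. Therefore both $c-d$ and $c+(r-1)d$ are coprime to $n$, as desired.

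The main obstacle I anticipate is not mathematical depth but rather the degenerate case $r = 1$ and a subtle point in the forward direction: one must be careful that $\gcd(\dtwo, dn/\done)$ being $1$ genuinely forces $\dtwo = 1$ rather than merely constraining it modulo the factor of $d$. The key observation that resolves this cleanly is that $\dtwo \mid n \mid dn \mid (dn/\done)\cdot \done$, so once $\done = 1$ we get $\dtwo \mid dn/\done$ and hence $\gcd(\dtwo, dn/\done) = \dtwo$, collapsing the condition to $\dtwo = 1$. I would state this explicitly to avoid any gap. The rest is routine bookkeeping with the multiplicative structure of the formula, so the proof should be short — essentially a paragraph once Theorem \ref{thm: mainthm} is in hand.
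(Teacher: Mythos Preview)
Your proposal is correct and follows essentially the same route as the paper: both arguments reduce the claim to showing $S_{cochar}=1 \Longleftrightarrow \done=\dtwo=1$ via the closed formulae of Main Theorem~1. The only difference is cosmetic: for the backward direction the paper invokes the second formula (Theorem~\ref{thm:Main2}) and computes $S_{cochar}=\frac{1}{n}\gcd(r,n)\cdot\lcm(n/\gcd(r,n),1)=1$, whereas you stay with Theorem~\ref{thm: mainthm} throughout and simply observe $\gcd(1,dn)=1$. Your choice is slightly more economical since it needs only one of the two formulae; the paper's version has the minor virtue of exercising both. Your explicit handling of the degenerate case $r=1$ and the clean justification that $\dtwo\mid n\mid dn$ forces $\gcd(\dtwo,dn)=\dtwo$ are both more careful than the paper's own write-up, which tacitly assumes $r\geq 2$ when concluding $\done^{r-1}=1\Rightarrow \done=1$.
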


\begin{proof}
It suffices to show that $S_{cochar} = 1$ if and only if $\done = \dtwo = 1$.
The backwards direction is easiest to see from Theorem \ref{thm:Main2}: if $\done= \dtwo=1$, then
\[ S_{cochar} = \frac{1}{n}\gcd(r,n)\cdot \lcm\left(\frac{n}{\gcd(r,n)},1\right) = 1.\]

For the forward direction, we use Theorem \ref{thm: mainthm}. Here, $S_{cochar} = 1$ implies both $\done^{r-1} = 1$ (and thus $\done = 1$) and $\gcd\left(\dtwo, \frac{dn}{\done}\right) = 1$. Since $\done = 1$ we then have $\gcd\left(\dtwo, dn\right) = 1$ which tells us that $\dtwo$ must be relatively prime to $n$. But $\dtwo = \gcd(c+(r-1)d,n)$ so thus $\dtwo = 1$.
\end{proof}

We will later see that both maximizing and minimizing $\mathfrak{W}$ result in very nice quantum connections.

It is also intriguing to ask when the diagonal number phenomenon developed in Section \ref{Cochar} matches the dimension precisely: that is, when is $|\Lambda_{fin}| = \done^{r-1}\dtwo$? One case in which this is true is fairly straightforward.

\begin{cor}\label{cor:coprime}
If $n$ and $r$ are relatively prime, $\dim(\mathfrak{W}) = n^r/\left(\done^{r-1}\dtwo\right)$.
\label{cor: r n rel prime}
\end{cor}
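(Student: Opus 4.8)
The plan is to deduce this immediately from the machinery already in place, specifically Theorem \ref{thm: Frechette} (which gives $\dim(\mathfrak{W}) = n^r/|\Lambda_{fin}|$), Lemma \ref{lemma: lambdasize} (which identifies $|\Lambda_{fin}| = S_{cochar}(c,d,r,n)$), and Theorem \ref{thm: mainthm} (the closed formula $S_{cochar} = \done^{r-1}\gcd(\dtwo, dn/\done)$). So it suffices to show that when $\gcd(r,n) = 1$ we have $\gcd(\dtwo, dn/\done) = \dtwo$, i.e., that $\dtwo \mid dn/\done$.

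First I would recall that $\dtwo = \gcd(c+(r-1)d, n)$ divides $n$, and observe the identity $c + (r-1)d = (c-d) + rd$. Since $\done = \gcd(c-d,n)$ divides both $c-d$ and $n$, we have $\done \mid (c+(r-1)d) - rd$, and one checks that $\done \mid c + (r-1)d$ as well (as $\done \mid c-d$ and $\done \mid n$, though one must be slightly careful — actually $\done$ divides $c-d$ directly, so $\done \mid (c-d)+rd - rd$; the cleaner route is that $\done \mid c-d$ gives $c + (r-1)d \equiv rd + (c-d) \equiv rd \pmod{\done}$). The key point is to track where the factor $\gcd(r,n)=1$ enters. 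I would argue prime by prime: fix a prime $p \mid n$ with $p^m \| n$; since $\gcd(r,n)=1$, $p \nmid r$, so $r$ is a unit mod $p^m$. Writing $v_p$ for $p$-adic valuation, we then have $v_p(c + (r-1)d) = v_p((c-d) + rd) \geq \min(v_p(c-d), v_p(d))$ with equality unless the valuations coincide — but in all cases $v_p(\dtwo) = \min(m, v_p(c+(r-1)d))$, and I want to compare this to $v_p(dn/\done) = v_p(d) + m - \min(v_p(c-d), m)$.

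The main obstacle — really the only one — is the elementary number-theoretic bookkeeping showing $v_p(\dtwo) \le v_p(d) + m - v_p(\done)$ for every prime $p\mid n$, which is exactly the statement $\dtwo \mid dn/\done$. This breaks into cases on whether $v_p(c-d) < v_p(d)$, $v_p(c-d) > v_p(d)$, or $v_p(c-d) = v_p(d)$, mirroring the three-case analysis in the proof of Proposition \ref{prop: M def as primes}; in each case one uses $p \nmid r$ to control $v_p(rd)$ and concludes. Once $\gcd(\dtwo, dn/\done) = \dtwo$ is established, Theorem \ref{thm: mainthm} gives $|\Lambda_{fin}| = \done^{r-1}\dtwo$, and dividing $n^r$ by this yields the claim. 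Alternatively — and perhaps more cleanly — one can bypass the valuation argument entirely by invoking Remark \ref{rmk: Fr_1} together with Theorem \ref{thm:Main2}: when $\gcd(r,n)=1$ we have $b=1$, so $M(c,d,r,n) = \lcm(\gcd(\dtwo, dn/\done), n) = n$ (since $\gcd(\dtwo,dn/\done)\mid n$), hence $S_{cochar} = S_{coroot}\cdot M/n = S_{coroot} = \done^{r-1}\dtwo\gcd(n/\done, n/\dtwo, 1) = \done^{r-1}\dtwo$ directly from Theorem \ref{thrm:coroot solutions}. I would present this second route as the main argument since it reuses prior results wholesale, and mention the valuation comparison only as the reason the two formulas of Main Theorem 1 agree here.
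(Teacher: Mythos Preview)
Your second route is precisely the paper's proof: it invokes Theorem \ref{thm:Main2} with $\gcd(r,n)=1$, so $n/b = n$ and the $\lcm$ collapses to $n$, giving $S_{cochar} = \done^{r-1}\dtwo$ immediately (note that $\gcd(n/\done,n/\dtwo,r)=1$ because $n/\done$ and $n/\dtwo$ divide $n$ while $r$ is coprime to $n$, which is what your ``$\gcd(\ldots,1)$'' is implicitly using). Your first route through Theorem \ref{thm: mainthm} and a prime-by-prime valuation comparison is correct but unnecessary here; the paper does not pursue it.
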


\begin{proof}
Suppose $\gcd(r,n) = 1$ and consult Theorem \ref{thm:Main2}. 
Then,
\[ S_{cochar} = \frac{\done^{r-1}\dtwo}{n}\cdot \lcm\left(n, \gcd\left(c+(r-1)d,n, \frac{dn}{\done}\right)\right)  = \done^{r-1}\dtwo\]
as the $\gcd$ above is a factor of $n$.
\end{proof}

However, the general conditions are a bit more complicated.

\begin{prop} Suppose $n = p_1^{m_1}\cdots p_j^{m_j}$,, and for each $p_i$, we have $c-d \equiv c_i p_i^{s_i}\pmod{p_i^{m_i}}$, $d \equiv d_i p_i^{t_i}\pmod{p_i^{m_i}}$, and $r\equiv r_i p_i^{\mu_i} \pmod{p_i^{m_i}}$, where $c_i, d_i,$ and $r_i$ are coprime to $p_i$. Then we have $\dim(\mathfrak{W}) = n^r/\left(\done^{r-1}\dtwo\right)$ if and only if one of the following three conditions is true for every $i$: 
\begin{itemize}
    \item $s_i < t_i + \mu_i$ and $2s_i \leq t_i+m_i$,
    \item $s_i > t_i+\mu_i$ and $s_i \leq m_i-\mu_i$, or
    \item $s_i = t_i + m_i$ and $2s_i + \tau_i \leq t_i+m_i$, where $\tau_i$ is the number of powers of $p_i$ in $c_i+r_id_i$.
\end{itemize}
\end{prop}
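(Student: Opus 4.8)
The plan is to translate the equality $\dim(\mathfrak{W}) = n^r/(\done^{r-1}\dtwo)$ into a statement about the factor $M(c,d,r,n)$ and then compare prime-by-prime with the explicit formula of Proposition \ref{prop: M def as primes}. First I would observe that by Theorem \ref{thm: Frechette}, $\dim(\mathfrak{W}) = n^r/S_{cochar}$, so the desired equality is equivalent to $S_{cochar} = \done^{r-1}\dtwo$. Comparing this with Theorem \ref{thm:Main2}, namely $S_{cochar} = S_{coroot}\cdot M/n$, and with the formula $S_{coroot} = \done^{r-1}\dtwo\gcd(n/\done, n/\dtwo, r)$ from Theorem \ref{thrm:coroot solutions}, the equality $S_{cochar} = \done^{r-1}\dtwo$ becomes
\[
\gcd\!\left(\frac{n}{\done},\frac{n}{\dtwo},r\right)\cdot \frac{M}{n} = 1,
\]
i.e. $M = n/\gcd(n/\done, n/\dtwo, r)$. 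Since both sides are multiplicative over the primes dividing $n$ (by the corollary following Proposition \ref{prop: M def as primes}), it suffices to check this equality of $p_i$-adic valuations one prime at a time.

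Next I would compute the $p_i$-adic valuation of the right-hand side. Using the factorizations in the statement, the valuation of $\done = \gcd(c-d,n)$ at $p_i$ is $\min(s_i, m_i)$, and the valuation of $\dtwo$ at $p_i$ can be read off from the three cases already analyzed in the proof of Proposition \ref{prop: M def as primes} (namely $s_i < t_i+\mu_i$, $s_i > t_i+\mu_i$, $s_i = t_i+\mu_i$, where here $\mu_i = v_{p_i}(r)$ since $r$ is not reduced mod $n$ in this statement — I should be careful to reconcile the two meanings of $\mu_i$, as the earlier proof uses $\mu_i = \min(m_i,\ell_i)$). In each case the valuation of $\gcd(n/\done, n/\dtwo, r)$ at $p_i$ is $\min(m_i - v_{p_i}(\done),\, m_i - v_{p_i}(\dtwo),\, v_{p_i}(r))$, so the valuation of $n/\gcd(n/\done,n/\dtwo,r)$ at $p_i$ equals $m_i - \min(\dots)= \max(v_{p_i}(\done), v_{p_i}(\dtwo), m_i - v_{p_i}(r))$. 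On the other side, Proposition \ref{prop: M def as primes} gives $v_{p_i}(M) = \max(m_i - \mu_i, \min(s_i, t_i + m_i - s_i))$. Setting these two expressions equal for each $i$ and simplifying within each of the three cases for the relation between $s_i$ and $t_i + \mu_i$ yields exactly the three itemized conditions: in the first case $\min(s_i, t_i + m_i - s_i) \geq$ (the rest) forces $2s_i \leq t_i + m_i$; in the second case the term $m_i - \mu_i$ dominates automatically except when $s_i \leq m_i - \mu_i$; and in the equality case the extra factor $p_i^{\tau_i}$ coming from $c_i + r_i d_i$ produces the condition $2s_i + \tau_i \leq t_i + m_i$.

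The main obstacle, I expect, is bookkeeping rather than conceptual: keeping straight the several competing valuations ($v_{p_i}$ of $\done$, $\dtwo$, $d$, $n/b$, and the residual units $c_i, d_i, r_i$), and in particular matching the case divisions of Proposition \ref{prop: M def as primes} — which were stated for a reduced $r$ with $\mu_i = \min(m_i,\ell_i)$ — against the present statement's unreduced $r$ with $\mu_i = v_{p_i}(r)$. One should check that these agree modulo the relevant powers (they do, since only $r \bmod p_i^{m_i}$ enters), and that the third itemized condition's hypothesis ``$s_i = t_i + m_i$'' is a typo for ``$s_i = t_i + \mu_i$'' consistent with the equality case of the earlier proof. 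Once the valuations are aligned, the verification in each of the three cases is a short inequality manipulation, and assembling them over all $i$ completes the proof.
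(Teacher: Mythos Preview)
Your approach is correct but takes a more circuitous route than the paper. The paper works directly from Theorem~\ref{thm: mainthm}: since $S_{cochar} = \done^{r-1}\gcd(\dtwo, dn/\done)$, the equality $S_{cochar} = \done^{r-1}\dtwo$ is immediately equivalent to the single divisibility condition $\dtwo \mid dn/\done$, and one then checks this one prime at a time by computing $v_{p_i}(c+(r-1)d)$ in the three cases $s_i \lessgtr t_i+\mu_i$ and comparing to $v_{p_i}(dn/\done) = t_i + m_i - s_i$. Your route instead passes through Theorem~\ref{thm:Main2} and Proposition~\ref{prop: M def as primes}, reducing to $M = n/\gcd(n/\done, n/\dtwo, r)$ and then matching two $\max/\min$ expressions for each prime; this requires computing $v_{p_i}(\dtwo)$ in each case on top of the formula for $v_{p_i}(M)$, effectively doubling the bookkeeping, though it does yield the same three inequalities in the end. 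The paper's reduction is cleaner because the condition $\dtwo \mid dn/\done$ is a single inequality of valuations rather than an equality of two compound expressions. You are also right that the third bullet's hypothesis ``$s_i = t_i + m_i$'' is a typo for ``$s_i = t_i + \mu_i$,'' and your remark about reconciling the two uses of $\mu_i$ is well taken but harmless, since only $r \bmod p_i^{m_i}$ enters either computation.
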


\begin{proof}
Using Theorem \ref{thm: mainthm}, $S_{cochar} = \done^{r-1}\dtwo$ if and only if $\gcd(\dtwo, \frac{dn}{\done}) = \dtwo$. That is, precisely when $\dtwo$ divides $\frac{dn}{\done}$. Since the left side divides $n$, it suffices to check that for every prime factor $p_i$ of $n$, the power of $p_i$ in $\dtwo$ divides that in $\frac{dn}{\done}$. Given any $p_i$, by the proof of Proposition \ref{prop: M def as primes}, we know that the power of $p_i$ on the right hand side is $t_i+m_i-s_i$. Similarly, the power of $p_i$ appearing in $c-d+rd$ is $\min\{s_i,t_i+\mu_i\} + \tau_i\cdot\delta_{s_i=t_i+\mu_i}$, where $\tau_i$ is the power of $p_i$ appearing in $c_i+r_id_i$. Thus, it suffices to determine exactly when
\begin{align}\label{ppowers75}
\min\{s_i,t_i+\mu_i\} + \tau_i\cdot\delta_{s_i=t_i+\mu_i} \leq t_i+m_i-s_i.\end{align}

To do so, we split into the same cases we used in the proof of Proposition \ref{prop: M def as primes}, based on the power of $p_i$ appearing in $\dtwo$. First, suppose $s_i < t_i + \mu_i$. Then we wind up in the first condition, because \eqref{ppowers75} is true precisely when
\[s_i \leq t_i+m_i-s_i.\]
Then, suppose $s_i > t_i+\mu_i$. Then \eqref{ppowers75} is true if and only if 
\[ \mu_i \leq m_i - s_i\]
satisfying the second condition. Lastly, suppose $s_i = t_i + \mu_i$. Then \eqref{ppowers75} is equivalent to the third condition 
\[2s_i + \tau_i \leq t_i+m_i. \qedhere\]
\end{proof}

Using the same techniques, we can also describe all the cases when $\dim(\mathfrak{W}) =(n/\done)^r$. As we will see later, the quantum module connected to $\mathfrak{W}$ has dimension $(n/\done)^r,$ so this is a necessary condition for the map to be an isomorphism.


\begin{prop}\label{prop:sizematches}
Suppose $n = p_1^{m_1}\cdots p_j^{m_j}$,, and for each $p_i$, we have $c-d \equiv c_i p_i^{s_i}\pmod{p_i^{m_i}}$, $d \equiv d_i p_i^{t_i}\pmod{p_i^{m_i}}$, and $r\equiv r_i p_i^{\mu_i} \pmod{p_i^{m_i}}$, where $c_i, d_i,$ and $r_i$ are coprime to $p_i$. Then $\dim(\mathfrak{W}) = (n/\done)^r$ if and only if for every $i$, we have $2s_i \leq m_i+t_i$ and at least one of the following conditions:
\begin{itemize}
\item $s_i< t_i+\mu_i$,
\item $s_i = t_i+\mu_i$ and $2s_i = t_i+m_i$, or
\item $s_i = t_i+ \mu_i$ and $c+(r-1)d$ contains no additional powers of $p_i$.
\end{itemize}
\end{prop}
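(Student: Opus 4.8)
The plan is to turn the claim into a statement about $p$-adic valuations and then reuse, almost verbatim, the case analysis from the proof of Proposition~\ref{prop: M def as primes}. First, combining Theorem~\ref{thm: Frechette} with Theorem~\ref{thm: mainthm} gives $\dim(\mathfrak{W}) = n^r/\left(\done^{r-1}\gcd\!\left(\dtwo, \tfrac{dn}{\done}\right)\right)$, so $\dim(\mathfrak{W}) = (n/\done)^r$ if and only if $\gcd\!\left(\dtwo, \tfrac{dn}{\done}\right) = \done$. Since every quantity here divides $n$, this equality can be checked one prime at a time: writing $v_{p_i}$ for the $p_i$-adic valuation, it is equivalent to $v_{p_i}\!\left(\gcd(\dtwo, dn/\done)\right) = v_{p_i}(\done)$ for each prime $p_i \mid n$.

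Next I would record the relevant valuations from the hypotheses: $v_{p_i}(\done) = \min\{s_i,m_i\} = s_i$ and $v_{p_i}(dn/\done) = t_i + m_i - s_i$. For $\dtwo$, expanding $c+(r-1)d = (c-d) + rd \equiv c_i p_i^{s_i} + r_i d_i p_i^{t_i+\mu_i} \pmod{p_i^{m_i}}$ exactly as in the proof of Proposition~\ref{prop: M def as primes} yields
$$v_{p_i}(\dtwo) = \begin{cases} s_i, & s_i < t_i + \mu_i,\\ t_i + \mu_i, & s_i > t_i + \mu_i,\\ \min\{s_i + \tau_i,\, m_i\}, & s_i = t_i + \mu_i,\end{cases}$$
where $\tau_i = v_{p_i}(c_i + r_i d_i)$. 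The target equality then reads $\min\{v_{p_i}(\dtwo),\, t_i + m_i - s_i\} = s_i$.

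Then I would dispatch the three cases. In every case the identity forces $t_i + m_i - s_i \geq s_i$, which is exactly the overarching condition $2s_i \leq t_i + m_i$. If $s_i < t_i + \mu_i$, then $v_{p_i}(\dtwo) = s_i$, so the equality holds as soon as the overarching condition does; this is the first bullet. If $s_i > t_i + \mu_i$, then $v_{p_i}(\dtwo) = t_i + \mu_i < s_i$, the minimum is strictly below $s_i$, and the equality fails, which is precisely why no condition of this shape is listed (and why requiring one of the three bullets automatically rules this out). Finally, if $s_i = t_i + \mu_i$, then $\min\{s_i+\tau_i, m_i\}$, the cap $m_i$, and (given the overarching condition) $t_i + m_i - s_i$ are all at least $s_i$, so the minimum equals $s_i$ if and only if one of them equals $s_i$: either $\tau_i = 0$, i.e.\ $c+(r-1)d$ carries no additional power of $p_i$, or $t_i + m_i - s_i = s_i$, i.e.\ $2s_i = t_i + m_i$; the leftover possibility $m_i = s_i$ is easily checked to force (via $c \equiv d \pmod{p_i^{m_i}}$, hence $t_i = m_i$) the relation $2s_i = t_i + m_i$, so it is subsumed. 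This produces the second and third bullets, and assembling the per-prime statements finishes the proof.

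The main obstacle I anticipate is the bookkeeping in the equality case $s_i = t_i + \mu_i$: one must confirm that the cap at $m_i$ in $v_{p_i}(\dtwo)$ does not secretly generate an extra family of solutions, and that the two listed sub-conditions are exhaustive. The short side computation showing that $m_i = s_i$ is genuinely subsumed (rather than a third sub-case) is the one spot requiring care.
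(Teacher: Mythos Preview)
Your proposal is correct and follows essentially the same route as the paper: reduce to $\gcd(\dtwo,\,dn/\done)=\done$, check prime by prime, and run the three-way comparison of $s_i$ against $t_i+\mu_i$ exactly as in Proposition~\ref{prop: M def as primes}. One small fix: in the $s_i=t_i+\mu_i$ subcase your parenthetical ``$c\equiv d\pmod{p_i^{m_i}}$, hence $t_i=m_i$'' is not the right implication; the conclusion $t_i=m_i$ actually comes from combining $s_i=m_i$ with the overarching inequality $t_i+m_i-s_i\ge s_i$ (giving $t_i\ge m_i$), after which $2s_i=t_i+m_i$ follows and the cap is indeed subsumed by the second bullet.
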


\begin{proof}
Using Theorem \ref{thm: mainthm}, $\dim(\mathfrak{W}) = (n/\done)^r$ if and only if $\gcd(\dtwo, \frac{dn}{\done}) = \done$. Using the machinery developed in the proof of Proposition \ref{prop: M def as primes}, notice that both sides are factors of $n$, so it suffices to check that the powers of each prime $p_i$ appearing in the prime factorization of $n$ match. 

Let $n = p_1^{m_1}\cdots p_j^{m_j}$ and suppose that $c-d\equiv c_i p_i^{s_i}\pmod{p_i^{m_i}}$ and $d \equiv d_i p_i^{t_i}\pmod{p_i^{m_i}}$, where $c_i$ and $d_i$ are coprime to $p_i$. Also, note that $r \equiv r_ip_i^{\mu_i}\pmod{p_i^{m_i}}$, where $r_i$ is also coprime to $p_i$. Then the power of $p_i$ appearing in $\done$ is $s_i$. From the proof of Proposition \ref{prop: M def as primes}, recall that the power of $p_i$ appearing in $\frac{dn}{\done}$ is $t_i+m_i-s_i$ and the power of $p_i$ appearing in $c-d+rd$ is $\min\{s_i,t_i+\mu_i\} + \tau_i\cdot\delta_{s_i=t_i+\mu_i}$, where $\tau_i$ is the power of $p_i$ appearing in $c_i+r_id_i$. So $\gcd(\dtwo, \frac{dn}{\done}) = \done$ if and only if
\begin{align}\label{ppowers}
\min\{\min\{s_i,t_i+\mu_i\} + \tau_i\cdot\delta_{s_i=t_i+\mu_i},t_i+m_i-s_i\} = s_i.
\end{align}

As in the proof of Proposition \ref{prop: M def as primes}, we split into three cases. If $s_i < t_i + \mu_i$, then \eqref{ppowers} gives us \[\min\{s_i, t_i+m_i-s_i\} = s_i,\] which is true precisely when $t_i+m_i-s_i \geq s_i$, satisfying the first conditions. 

If $s_i > t_i + \mu_i$, then we have a contradiction, since the minimum in \eqref{ppowers} is already less than $s_i$, and vice versa.

Finally, if $s_i = t_i+\mu_i$, then \eqref{ppowers} is
\[\min\{ s_i + \tau_i, t_i+m_i-s_i\} = s_i,\]
which is true exactly when $2s_i = t_i+m_i$ or $\tau_i = 0$ and $s_i \leq t_i + m_i - s_i$, satisfying the second and third conditions, respectively.
\end{proof}

We have seen in this section that the two different formulations of Theorems \ref{thm: mainthm} and \ref{thm:Main2} are useful for many different purposes. While the approach used to generate Theorem \ref{thm:Main2} provides a more natural path for generalization beyond $GL_r(F)$, it would be interesting in future work to investigate whether there is an analogous approach to that used in Theorem \ref{thm: mainthm} for other reductive groups. In particular, understanding how Theorems \ref{thm: mainthm} and \ref{thm:Main2} are related for the case of $GL_r(F)$ will illuminate a path for extending this connection further. 

\section{Quantum Connections}\label{Quantum}

Finally, we marshal together results from the previous sections to investigate how the space of Whittaker functions is connected to quantum group modules, building the necessary quantum definitions along the way. 

Let $U_q(c,d,n)$ be the affine quantum group $U_q(\widehat{\mathfrak{gl}}(n/\done))$, where $q$ is the cardinality of the residue field for our nonarchimedean local field $F$. For the results of this paper, we will not need the precise definition here, so we refer the reader to Chari and Pressley \cite{CP} for the details of the construction and instead note merely a few interesting facts about $U_q(c,d,n)$. First, despite the name, $U_q(c,d,n)$ is not a group, but rather an algebra, specifically a quasitriangular Hopf algebra. That is, it is both an algebra and a coalgebra, so it comes equipped with not only multiplication and a unit map but also comultiplication, a counit, and an antipode map relating the algebra and coalgebra structures. Furthermore, this quantum group has a very nice set of modules which we can model concretely.

\begin{definition}
    For $z\in \mathbb{C}$, let $V_+(z)$ be an \emph{evaluation module}, or \emph{evaluation representation}, for $U_q(c,d,n)$. Again, we will not need the full structure of this representation for this paper, but following Kojima \cite{Kojima} as a convenient source, note that $V_+(z)$ is $n/\done$-dimensional and its basis may be parametrized by the elements of $\Z/(n/\done)\Z$.
\end{definition}

In addition, $U_q(c,d,n)$ comes with an invertible element called a \emph{universal $R$-matrix} $R \in U_q(c,d,n)\otimes U_q(c,d,n)$, which acts on tensor products of $U_q(c,d,n)$-modules. Choosing a particular pair of modules and their bases, $R$ becomes an honest-to-goodness matrix.

It is this $R$-matrix that sparked the connection between Whittaker functions and quantum groups: $R$-matrices are natural sources for solutions to Yang-Baxter equations, functional relations from statistical mechanics that that arise, among other places, in the theory of lattice models. In \cite{BBCFG}, Brubaker, Bump, Chinta, Friedberg, and Gunnells constructed a ice-type lattice model called \emph{Metaplectic Ice} which computes metaplectic Whittaker functions for the nicest cover ($c=1,d=0$, so $\done=\dtwo=1$). However, the Yang-Baxter equation for this model was unknown until Brubaker, Bump, and Buciumas identified it as a Drinfeld twist of the $R$-matrix for $U_q(\widehat{\mathfrak{gl}}(n))$ in \cite{BBBIce}. Using the lattice model as a bridge, they mapped the space of Whittaker functions on this cover isomorphically into the tensor product $V_+(z_1) \otimes \cdots \otimes V_+(z_r)$, where $z_i \in \mathbb{C}$ are the Satake parameters for the principal series representation on which the Whittaker function space $\mathfrak{W}=\mathfrak{W}^{\boldsymbol{z}}$ is built. Under this isomorphism, the action of the $R$-matrix on the components of $V_+(z_1) \otimes \cdots \otimes V_+(z_r)$ matches \emph{precisely} the action of intertwining operators on the principal series representation and thus the Whittaker function space.

Fantastically, this connection extends for \emph{any} metaplectic cover of $GL_r(F)$. In \cite{Frechette}, the second author built a generic lattice model for an arbitrary covering group, and used it to construct a map between the space of Whittaker functions and a quantum group module for the quantum group $U_q(c,d,n)=U_q(\widehat{\mathfrak{gl}}(n/\done))$. 
However, as we saw already from the formulae for $\dim(\mathfrak{W})$ and the structure theory in Section \ref{Structure}, changing the parameters $c$ and $d$ results in a significantly more complicated function space. These complications extend to the map, as the quantum space changes differently than $\mathfrak{W}$ does. In spite of this, the map prescribed by the lattice model in the fully general case is still a homomorphism and it matches exactly the actions of the R-matrix on the right side to those of the intertwining operators on the left.

Consider the tensor product $V_+(z_1) \otimes \cdots \otimes V_+(z_r)$ of quantum group evaluation modules for $U$. As a vector space, we have
\begin{align*}\dim \Bigl(V_+(z_1) \otimes \cdots \otimes V_+(z_r)\Bigr) =\left( \frac{n}{\done}\right)^{r}.\end{align*}

\noindent Note that unlike either of the formulae for $\dim(\mathfrak{W})$ in Main Theorem \ref{thm: Intromainthm}, this formula is not affected by $\dtwo$.

Now we come to the connection precisely. Using Theorem \ref{thm: Frechette}, take representatives for the cosets $\widetilde{T}/H$ from the set $(\Z/n\Z)^r$. Using Theorem \ref{thrm: background}, use these representatives to construct a basis for $\mathfrak{W}$.

\begin{thm}[Frechette \cite{Frechette}, Theorem 1.1]\label{Frechette2}
Let $\rho = (r-1,...,2,1,0)$. For $\boldsymbol{z}\in \mathbb{C}^r$, the map 
\begin{align*}
\theta_{\boldsymbol{z}}: \mathfrak{W}^{\boldsymbol{z}} &\rightarrow V_+(z_1) \otimes \cdots \otimes V_+(z_r)\\
\boldsymbol{\nu}\hspace{0.25cm} &\mapsto \hspace{0.5cm} \rho - \boldsymbol{\nu} \pmod{n/\done},
\end{align*}
where the modulus is taken in each component, is a homomorphism compatible with the actions of intertwining operators on $\mathfrak{W}$ and the R-matrix on the quantum tensor product.
\end{thm}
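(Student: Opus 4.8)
Since this theorem is established in \cite{Frechette}, the plan is to follow the lattice-model bridge developed there, which generalizes the constructions of \cite{BBCFG,BBBIce}. The starting point is that for each cover $\widetilde{G}_{c,d,r,n}$ one builds a solvable ice-type model on a grid whose horizontal strands are indexed $1,\dots,r$ and whose admissible states carry charges (colors) valued mod $n/\done$. Weighted by Boltzmann weights, the partition function with a fixed boundary equals the value of a metaplectic Whittaker function, and letting the boundary data along one edge of the grid range over the admissible configurations produces exactly the basis of $\mathfrak{W}^{\boldsymbol{z}}$ indexed by $\widetilde{T}/H$ via Theorem \ref{thm: Frechette} and Theorem \ref{thrm: background}. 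The bookkeeping of how the charges are distributed along that boundary --- relative to the staircase (antidominant) boundary on the opposite edge --- is what produces the $\rho$-shifted reduction $\boldsymbol{\nu} \mapsto \rho - \boldsymbol{\nu} \pmod{n/\done}$, with $\rho = (r-1,\dots,1,0)$ the Weyl vector encoding that staircase.

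First I would recall the precise dictionary from \cite{Frechette}: the vertical strands of the lattice carry copies of the evaluation module $V_+(z_i)$, one per row, with spectral parameter $z_i$, so that reading the model ``columnwise'' identifies the space spanned by partition functions with $V_+(z_1) \otimes \cdots \otimes V_+(z_r)$, of dimension $(n/\done)^r$. Second, I would verify that the Boltzmann weights at each interior vertex coincide, after the Drinfeld twist of \cite{BBBIce}, with matrix coefficients of the universal $R$-matrix of $U_q(\widehat{\mathfrak{gl}}(n/\done)) = U_q(c,d,n)$; this is the solvability input and is what endows the target with its $U_q(c,d,n)$-module structure. Granting these two identifications, $\theta_{\boldsymbol{z}}$ is the change-of-basis map between the two readings of the same partition function, hence well-defined on the basis dictated by the model, and it is a homomorphism because source and target are free vector spaces on the respective boundary-state sets with $\theta_{\boldsymbol{z}}$ a bijection on (sub)bases.

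The substantive content, and the step I expect to be the main obstacle, is compatibility with the two actions. On the Whittaker side, intertwining operators realize the functional equations of the metaplectic principal series, computed by a metaplectic Casselman--Shalika / scattering-matrix argument carrying Gauss-sum factors; on the quantum side, the $R$-matrix acts on adjacent tensor factors $V_+(z_i)\otimes V_+(z_{i+1})$. The key identity is the Yang--Baxter equation for the Boltzmann weights, which lets one pull an $R$-matrix vertex through a column of the lattice; performed on the partition function, this reproduces exactly the effect of the corresponding intertwiner on the Whittaker function. I would carry this out for the simple reflections $s_i$, which generate $W \cong S_r$, reducing to the rank-one Yang--Baxter move, and then invoke the braid relations to cover arbitrary Weyl elements. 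The delicate points will be tracking the metaplectic Gauss sums appearing in the intertwining operators and confirming that they are absorbed by the twisted weights, and ensuring that the coset representatives the lattice model selects are precisely those for which the reduction mod $n/\done$ is unambiguous --- a well-definedness subtlety for general bases that is exactly what Main Theorem 2 (Theorem \ref{quantummap}) isolates and resolves.
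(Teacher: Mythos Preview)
Your proposal is a reasonable outline of the lattice-model argument from \cite{Frechette}, but note that the present paper does not prove this theorem at all: it is quoted as Theorem~1.1 of \cite{Frechette} and used as a black box. There is no proof to compare against here; the paper simply states the result and immediately moves on to investigate when $\theta_{\boldsymbol{z}}$ is well-defined independent of the choice of coset representatives (Theorem~\ref{quantummap}). So while your sketch correctly identifies the ingredients --- the ice-type model with charges mod $n/\done$, the identification of Boltzmann weights with Drinfeld-twisted $R$-matrix entries, and the Yang--Baxter move matching simple intertwiners to $R$-matrix actions --- none of this appears in the paper under review, and no proof is expected of you for this statement.
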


One of the difficulties that arose in extending from the nicest cover to generic covers is that the lattice model specifies a choice of basis for $\mathfrak{W}$ that makes this map a homomorphism, but the lattice model itself is not necessary for the proof and serves as a removable bridge between the Whittaker function space and the quantum group model. Without the lattice model, however, there is no canonical choice of basis for $\mathfrak{W}$, so we ask: when is this map well-defined regardless of the choice of representative for each coset in $\widetilde{T}/H$?

Using the structure of $\mathfrak{W}$ developed in Section \ref{Structure}, we can investigate this map more precisely, and arrive at the following theorem, which is a restatement of the first part of Main Theorem 2.

\begin{thm}\label{quantummap}
For the metaplectic cover $\widetilde{G}_{c,d,r,n}$, the map $\theta_{\boldsymbol{z}}:\mathfrak{W} \rightarrow  V_+(z_1) \otimes \cdots \otimes V_+(z_r)$ from Theorem \ref{Frechette2} is well-defined independent of choice of coset representatives for $\widetilde{T}/H$ if and only if
\[ \gcd\left(\dtwo, \frac{dn}{\done}\right) = \gcd(c,d,n).\]
\end{thm}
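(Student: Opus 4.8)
The plan is to recast ``well-defined independent of the choice of coset representatives'' as a containment of subgroups of $(\Z/n\Z)^r$, and then to decide that containment by comparing orders, invoking Theorem~\ref{thm: mainthm} twice (once with modulus $n$, once with modulus $\done$).

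First I would unwind the definition. Using Theorem~\ref{thm:restate1.1} to identify $\widetilde{T}/H$ with $(\Z/n\Z)^r/\Lambda_{fin}$ (and Lemma~\ref{lemma: lambdasize} to identify $\Lambda_{fin}$ with the solution set of the cocharacter equations), two representatives $\boldsymbol{\nu},\boldsymbol{\nu}'$ of the same coset differ by an element of $\Lambda_{fin}$, and $\theta_{\boldsymbol{z}}(\boldsymbol{\nu})=\theta_{\boldsymbol{z}}(\boldsymbol{\nu}')$ exactly when $\boldsymbol{\nu}-\boldsymbol{\nu}'\equiv\boldsymbol{0}_r\pmod{n/\done}$ in every coordinate. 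Hence $\theta_{\boldsymbol{z}}$ is well-defined for every system of representatives if and only if $\Lambda_{fin}\subseteq\left(\frac{n}{\done}\Z/n\Z\right)^r$. Since $\Lambda_{fin}$ is a subgroup of $(\Z/n\Z)^r$ (it is the kernel of $\boldsymbol{x}\mapsto B_{c,d}\boldsymbol{x}$, using symmetry of $B_{c,d}$) and $\left(\frac{n}{\done}\Z/n\Z\right)^r$ is a subgroup of order $\done^r$, this containment is equivalent to the single equality $|\Lambda_{fin}| = \left|\Lambda_{fin}\cap\left(\frac{n}{\done}\Z/n\Z\right)^r\right|$.

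Next I would evaluate both cardinalities. The left-hand side is $|\Lambda_{fin}| = S_{cochar}(c,d,r,n) = \done^{r-1}\gcd\left(\dtwo,\frac{dn}{\done}\right)$ by Theorem~\ref{thm: mainthm}. For the right-hand side, the isomorphism $\boldsymbol{u}\mapsto\frac{n}{\done}\boldsymbol{u}$ from $(\Z/\done\Z)^r$ onto $\left(\frac{n}{\done}\Z/n\Z\right)^r$ carries the condition $B_{c,d}\left(\frac{n}{\done}\boldsymbol{u}\right)\equiv\boldsymbol{0}_r\pmod{n}$ to $B_{c,d}\boldsymbol{u}\equiv\boldsymbol{0}_r\pmod{\done}$, so that $\left|\Lambda_{fin}\cap\left(\frac{n}{\done}\Z/n\Z\right)^r\right| = S_{cochar}(c,d,r,\done)$, the number of solutions of the cocharacter equations for the same $c,d,r$ but with modulus $\done$. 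Applying Theorem~\ref{thm: mainthm} with this new modulus and simplifying via $\done=\gcd(c-d,n)\mid c-d$ — which gives $\gcd(c-d,\done)=\done$, then $\gcd\left(c+(r-1)d,\done,\frac{d\done}{\done}\right)=\gcd(c+(r-1)d,d,\done)=\gcd(c,d,\done)$, and finally $\gcd(c,d,\done)=\gcd(c,d,c-d,n)=\gcd(c,d,n)$ — this collapses to $S_{cochar}(c,d,r,\done)=\done^{r-1}\gcd(c,d,n)$.

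Combining, $\theta_{\boldsymbol{z}}$ is well-defined for every choice of basis if and only if $\done^{r-1}\gcd\left(\dtwo,\frac{dn}{\done}\right)=\done^{r-1}\gcd(c,d,n)$, that is, $\gcd\left(\dtwo,\frac{dn}{\done}\right)=\gcd(c,d,n)$, which is the asserted condition. I expect the only delicate points to be the two reductions: verifying that $\boldsymbol{u}\mapsto\frac{n}{\done}\boldsymbol{u}$ is genuinely a bijection between the two solution sets modulo their respective moduli, and the chain of elementary gcd identities that all rest on $\done\mid c-d$; also one should note that applying Theorem~\ref{thm: mainthm} with modulus $\done$ in place of $n$ is legitimate, since $c,d,r$ enter that theorem only through the bilinear form and the modulus is a free parameter. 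An alternative to the second reduction, if one prefers not to reuse Theorem~\ref{thm: mainthm}, is to count $\Lambda_{fin}\cap\left(\frac{n}{\done}\Z/n\Z\right)^r$ directly from the parametrization of Proposition~\ref{prop: solutions to inhom}: writing $x_1=\frac{n}{\done}u$ reduces the defining congruence to the single linear equation $u\left(c+(r-1)d\right)+d(v_2+\cdots+v_r)\equiv 0\pmod{\done}$ in $(u,v_2,\dots,v_r)\in(\Z/\done\Z)^r$, whose solution count is $\done^{r-1}\gcd\left(c+(r-1)d,d,\done\right)=\done^{r-1}\gcd(c,d,n)$, matching the count above.
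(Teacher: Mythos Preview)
Your proof is correct and takes a genuinely different route from the paper's. The paper argues directly from the parametrization of Proposition~\ref{prop: solutions to inhom}: for $\boldsymbol{x},\boldsymbol{y}\in\Lambda_{fin}$ it observes that $\tzmap(\boldsymbol{x})-\tzmap(\boldsymbol{y})\equiv(y_1-x_1)\boldsymbol{1}_r\pmod{n/\done}$, then uses the first cocharacter equation to show that the set of values attained by $y_1-x_1$ consists exactly of the multiples of $\tfrac{n}{\done}\cdot\tfrac{\gcd(c,d,n)}{\gcd(\dtwo,\,dn/\done)}$, from which the equivalence follows. Your argument instead reformulates well-definedness as the containment $\Lambda_{fin}\subseteq\bigl(\tfrac{n}{\done}\Z/n\Z\bigr)^r$ and decides this by a cardinality comparison, applying Theorem~\ref{thm: mainthm} a second time with modulus $\done$. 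The key observation that makes your approach work cleanly is that $\done'=\gcd(c-d,\done)=\done$, which collapses the formula for $S_{cochar}(c,d,r,\done)$ to $\done^{r-1}\gcd(c,d,n)$. This is slicker in that it avoids tracking the explicit range of $y_1-x_1$ and the lcm manipulation in the paper, at the cost of invoking the main theorem twice; the paper's approach is more self-contained and perhaps makes the role of the parametrization more transparent. Your alternative at the end, counting $\Lambda_{fin}\cap\bigl(\tfrac{n}{\done}\Z/n\Z\bigr)^r$ directly via Proposition~\ref{prop: solutions to inhom}, is essentially a hybrid and is closest in spirit to the paper's method.
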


\begin{proof}
Using the characterization developed in Theorem \ref{thm:Main2}, $\theta_{\boldsymbol{z}}$ is well-defined if and only if all the elements in $\Lambda_{fin}$ map to the same element in the module. Using the description of Proposition \ref{prop: solutions to inhom}, write $\boldsymbol{x} = x_1 \cdot \boldsymbol{1}_r + \frac{n}{\done} (0,v_2,v_3,...,v_r)$ and 
$\boldsymbol{y}= y_1 \cdot \boldsymbol{1}_r + \frac{n}{\done} (0,v_2',v_3',...,v_r')$, for $x_1,y_1, v_i, v_i' \in \Z$ for all $i$. Thus,
\[\tzmap(\boldsymbol{x}) - \tzmap(\boldsymbol{y}) = (y_1-x_1) \cdot \boldsymbol{1}_r  \pmod{n/\done}.\]

Since $\boldsymbol{x},\boldsymbol{y}\in \Lambda_{fin}$, we have $\boldsymbol{y}- \boldsymbol{x} \in \Lambda_{fin}$ as well, so the defining cocharacter equations give more information about the possible values of $y_1-x_1$.
Using the first cocharacter equation, there exists $k\in \Z$ such that
\begin{align*}
    (c+(r-1)d) \cdot (y_1 -x_1) \equiv  \frac{dn}{\done} \cdot k \pmod{n}.
\end{align*}
Varying over all $\boldsymbol{x}, \boldsymbol{y} \in \Lambda_{fin}$, the possible values for the right hand side of this equation are precisely the integer multiples of $\gcd\left(n, \frac{dn}{\done}\right).$ Then, both sides must be a multiple of $\lcm\left(c+(r-1)d, \gcd\left(n, \frac{dn}{\done}\right)\right)$. Using the fact that $\lcm(A,B) = (A\cdot B)/\gcd(A,B)$, the possible values for $y_1-x_1$ are all the integer multiples of 
\begin{align} \label{multipleND1}\frac{\gcd\left(n, \frac{dn}{\done}\right)}{\gcd\left(c+(r-1)d,n, \frac{dn}{\done}\right)} = \frac{\frac{n}{\done}\gcd\left(\done, d\right)}{\gcd\left(\dtwo, \frac{dn}{\done}\right)} = \frac{n}{\done} \cdot \frac{\gcd\left(c,d,n\right)}{\gcd\left(\dtwo, \frac{dn}{\done}\right)}.\end{align} 

Going back to the map,  $\tzmap(\boldsymbol{x})- \tzmap(\boldsymbol{y}) = 0$ if and only if $y_1-x_1 \equiv 0 \pmod{n/\done}$. Since $\gcd(c,d,n)$ divides both $\dtwo$ and $\frac{dn}{\done}$, we have that the expression in \eqref{multipleND1} is a multiple of $\frac{n}{\done}$ if and only if $\gcd(c,d,n)=\gcd\left(\dtwo, \frac{dn}{\done}\right)$. Therefore the map $\tzmap$ is well-defined for any choice of coset representatives of $\widetilde{T}/H$ if and only if $\gcd(c,d,n)=\gcd\left(\dtwo, \frac{dn}{\done}\right)$.
\end{proof}

\begin{cor}\label{quantumIsom}
When $\mathfrak{W}$ is either maximum or minimum size, $\tzmap$ is an isomorphism.
\end{cor}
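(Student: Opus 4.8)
The plan is to invoke Theorem \ref{quantummap} and show that its hypothesis $\gcd(\dtwo, \frac{dn}{\done}) = \gcd(c,d,n)$ is automatically satisfied in both the maximum and minimum cases, and then separately check that $\tzmap$ is not merely well-defined but a bijection by comparing dimensions. First I would handle the minimum case: by Corollary \ref{cor:c=d=0}, $\dim(\mathfrak{W})=1$ exactly when $c\equiv d\equiv 0\pmod n$, so $\done = \dtwo = n$ and $\gcd(c,d,n)=n$; substituting, $\gcd(\dtwo, \frac{dn}{\done}) = \gcd(n, 0) = n = \gcd(c,d,n)$, so the hypothesis of Theorem \ref{quantummap} holds and $\tzmap$ is well-defined. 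The target has dimension $(n/\done)^r = 1^r = 1$, which matches $\dim(\mathfrak{W})$, so a well-defined nonzero homomorphism between one-dimensional spaces is an isomorphism. (One should note $\tzmap$ is nonzero since it is built from the lattice-model bridge of \cite{Frechette}, or simply observe that on any basis vector $\boldsymbol{\nu}$ the image $\rho - \boldsymbol{\nu} \pmod{1}$ is the unique basis element of the target.)

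Next I would handle the maximum case: by Corollary \ref{cor:cdcoprime}, $\dim(\mathfrak{W}) = n^r$ exactly when $c-d$ and $c+(r-1)d$ are coprime to $n$, i.e.\ $\done = \dtwo = 1$. Then $\gcd(\dtwo, \frac{dn}{\done}) = \gcd(1, dn) = 1$, and since $\done = \gcd(c-d,n)=1$ forces $\gcd(c,d,n)=1$ as well (any common divisor of $c,d,n$ would divide $c-d$ and $n$), the hypothesis of Theorem \ref{quantummap} again holds, so $\tzmap$ is well-defined. The target has dimension $(n/\done)^r = n^r = \dim(\mathfrak{W})$, so once more a well-defined homomorphism matching dimensions needs only injectivity or surjectivity to be an isomorphism. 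For this I would argue directly from the formula: the map $\boldsymbol{\nu} \mapsto \rho - \boldsymbol{\nu} \pmod{n/\done}$ with $\done = 1$ is $\boldsymbol{\nu}\mapsto \rho - \boldsymbol{\nu}\pmod n$, which is a bijection of $(\Z/n\Z)^r$; since every coset of $\widetilde{T}/H = \Lambda_{fin}$-cosets is a singleton when $\dim(\mathfrak{W})=n^r$ (i.e.\ $|\Lambda_{fin}|=1$), the basis of $\mathfrak{W}$ is indexed bijectively by $(\Z/n\Z)^r$ and $\tzmap$ permutes basis vectors, hence is an isomorphism.

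The main obstacle I anticipate is bookkeeping rather than depth: I must be careful that ``well-defined independent of choice of coset representatives'' genuinely combines with the dimension count to yield an isomorphism. The subtle point is that well-definedness from Theorem \ref{quantummap} guarantees $\tzmap$ descends to a map out of $\mathfrak{W}$ for \emph{any} choice of representatives, and in the extreme cases each $\Lambda_{fin}$-coset is forced (a point in the minimum case, a singleton in the maximum case), so there is essentially no choice to worry about — but I should state this cleanly rather than wave at it. A second minor subtlety is confirming the map is injective (equivalently surjective) and not just well-defined; here the explicit shift-and-negate formula makes the permutation structure transparent once $\done$ is pinned down to $1$ or $n$, so this reduces to the observation that $\boldsymbol{\nu}\mapsto \rho-\boldsymbol{\nu}$ is invertible modulo $n/\done$. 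I would close by remarking that these two extremes are precisely the covers for which the quantum module ``sees'' all of $\mathfrak{W}$, consistent with the $d\equiv 0$ discussion following Corollary \ref{cor:d=0}.
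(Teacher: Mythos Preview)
Your proposal is correct and follows essentially the same approach as the paper: invoke Corollaries \ref{cor:c=d=0} and \ref{cor:cdcoprime} to pin down $\done$ and $\dtwo$ in the two extreme cases, verify the hypothesis of Theorem \ref{quantummap}, and then observe that the explicit formula $\boldsymbol{\nu}\mapsto \rho-\boldsymbol{\nu}\pmod{n/\done}$ gives a bijection of basis vectors when $\done\in\{1,n\}$. The only cosmetic differences are that the paper treats the maximum case first and is terser about the bijectivity (saying the map is an isomorphism ``by definition'' in the maximum case and ``vacuously'' in the minimum case), whereas you spell out the dimension match and permutation structure more explicitly.
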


\begin{proof}
    If $\mathfrak{W}$ is of maximum size $n^r$, then by Corollary \ref{cor:cdcoprime}, we have $\done = \dtwo = 1$. Thus, $\gcd\left(\dtwo, \frac{dn}{\done}\right) = 1$, which forces $\gcd(c,d,n)=1$, so $\tzmap$ is well-defined. In this case $\widetilde{T}/H$ is parametrized by all of $(\Z/n\Z)^r$, and since $n/\done = n$, so is the quantum module. Looking at the description of $\tzmap$ in Theorem \ref{Frechette2}, we see that $\tzmap$ is an isomorphism by definition, flipping $\mathfrak{W}$ and shifting by $\rho$.
 
    If $\mathfrak{W}$ is of minimum size $1$, then Corollary \ref{cor:c=d=0} shows that $\done = \dtwo = n$. Thus, $\gcd(c,d,n) = n,$ which forces $\gcd\left(\dtwo, \frac{dn}{\done}\right) = n$ and makes $\tzmap$ well-defined. Here, $\widetilde{T}/H$ is a single element, which maps to the single element $\boldsymbol{0}$ in $(\Z/(n/\done)\Z)^r$, since $n/\done = 1$. Thus the map is vacuously an isomorphism.
\end{proof}

Note that the first case of Corollary \ref{quantumIsom} includes the nicest cover $c=1, d=0$ originally treated by \cite{BBCFG} and \cite{BBBIce}, explaining why the quantum map on $\mathfrak{W}$ for this case is an isomorphism.

Using our description of $\Lambda_{fin}$, we intend in the future to come up with a precise description of the structure of $\mathfrak{W}$ in the style of Corollary \ref{quantumIsom} for more general cases, which will allow us to characterize the precise behavior of $\tzmap$. In particular, we are interested in providing a companion to Proposition \ref{prop:sizematches} by finding a sufficient condition for all cases when $\tzmap$ is an isomorphism. Extending our methods and results for $\mathfrak{W}$ from $GL_r(F)$ to arbitrary reductive groups will then give us more information about what the quantum objects connected to other types of reductive groups should be. While some solvable lattice models for other types exist, they have not yet been linked to modules for quantum groups or other quantum algebraic objects, so we believe that investigating the dimension and description of $\mathfrak{W}$ for other types will illuminate likely candidates for broader quantum connections.

\bibliography{bibliography}{}
\bibliographystyle{amsplain}

\end{document}